\newtheoremstyle{exampstyle}
{3mm} 
{} 
{\itshape} 
{} 
{\bfseries} 
{.} 
{.5em} 
{} 
\theoremstyle{exampstyle}
\newtheorem{theo}{Theorem}
\numberwithin{theo}{section} 
\newtheorem{defi}[theo]{Definition}
\newtheorem{lem}[theo]{Lemma}
\newtheorem{cor}[theo]{Corollary}
\newtheorem{con}[theo]{Conjecture}
\newtheorem{rem}[theo]{Remark}
\begin{document}

\begin{center}
\section*{Fungal tip growth arising through a codimension-1 global bifurcation}
\large{T.G. de Jong, A.E. Sterk, H.W. Broer}
\end{center}

\begin{small} \textbf{Abstract.}
Tip growth is a growth stage which occurs in fungal cells. During tip growth, the cell exhibits continuous extreme lengthwise growth while its shape remains qualitatively the same. A model for single celled fungal tip growth is given by the Ballistic Ageing Thin viscous Sheet (BATS) model, which consists of a 5-dimensional system of first order differential equations. The solutions of the BATS model that correspond to fungal tip growth arise through a codimension-1 global bifurcation in a 2-parameter family of solutions. In this paper we derive a toy model from the BATS model. The toy model is given by 2-dimensional system of first order differential equations which depend on a single parameter. The main achievement of this paper is a proof that the toy model exhibits an analogue of the codimension-1 global bifurcation in the BATS model. An important ingredient of the proof is a topological method which enables the identification of the bifurcation points. Finally, we discuss how the proof may be generalized to the BATS model.
\end{small}


\section{Introduction}

Tip growth is a growth stage of a biological cell. During tip growth the cell exhibits extreme lengthwise growth while its shape remains qualitatively the same and the cell's tip velocity remains approximately constant, see Figure \ref{fig:tipgrowth}. 

\begin{figure}[h]
\begin{center}
\includegraphics[width=11cm]{./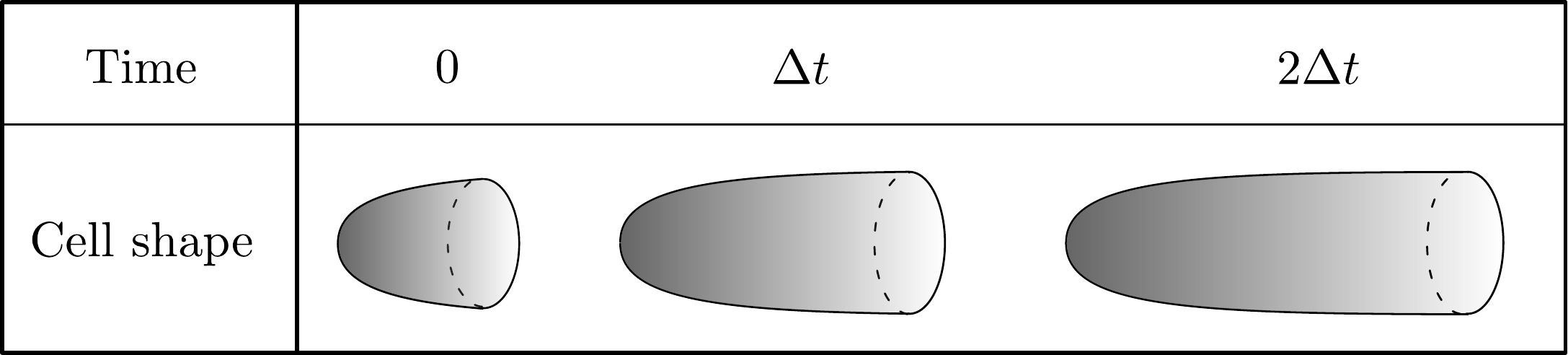}
\caption{Qualitative illustration of tip growth depicting the cell shape at time steps $\Delta t$. During tip growth the cell's tip moves at a constant speed and the shape remains qualitatively the same. \label{fig:tipgrowth}}
\end{center}
\end{figure}

\begin{figure}[t]
\begin{center}
\includegraphics[width=9.5cm]{./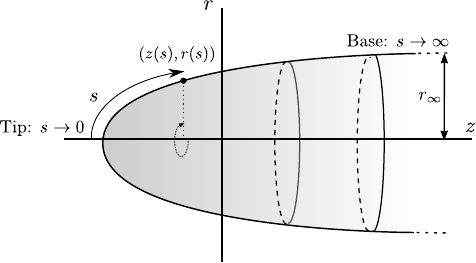}
\caption{In the BATS model the cell wall is a surface of revolution where the generating curve in the $(r,z)$-plane is parametrised by its arc length $s$ to the tip. Axial symmetry is a standard assumption in fungal tip growth models \cite{BG89,EGG11,GO10,TIN06}. The tip of the cell is given by the limit $s \rightarrow 0$.  The cell is assumed to be infinitely long so that the base of the cell corresponds to the limit $s \rightarrow \infty$, and the limiting base radius is denoted by $r_\infty$. \label{fig:cellshape}}
\end{center}
\end{figure}

\paragraph{Modelling considerations.}

In \cite{JONBIO} the Ballistic Ageing Thin viscous Sheet (BATS) model for single celled fungal tip growth is presented. The BATS model incorporates material properties of the cell wall to model the cell's growth. The BATS model is studied in a co-moving frame which removes the time dependency. The governing equations of the BATS model are given by a 5-dimensional system of first order differential equations. The independent variable of the BATS model is the arc length to the tip denoted by $s$, see Figure \ref{fig:cellshape}. The main problem is to find solutions which correspond to the cell shape in Figure \ref{fig:cellshape}. These solutions are called \emph{steady tip growth solutions} since they describe continuous fungal growth close to the cell's tip.

In \cite{JONIEEE} the first two authors introduced a family of solutions which allow for a parametrization by two parameters such that the numerically computed steady tip growth solutions arise through a codimension-1 global bifurcation. This bifurcation is non-standard since it does not involve a stability change upon variation of parameters. The existence of this global bifurcation is hard to confirm analytically. Therefore, we will derive a toy model from the BATS model which captures the phenomenology in an understandable way, see Figure \ref{fig:3blocks} for a schematic. \newpage

\begin{figure}[t]
\begin{center}
\includegraphics[width=13cm]{./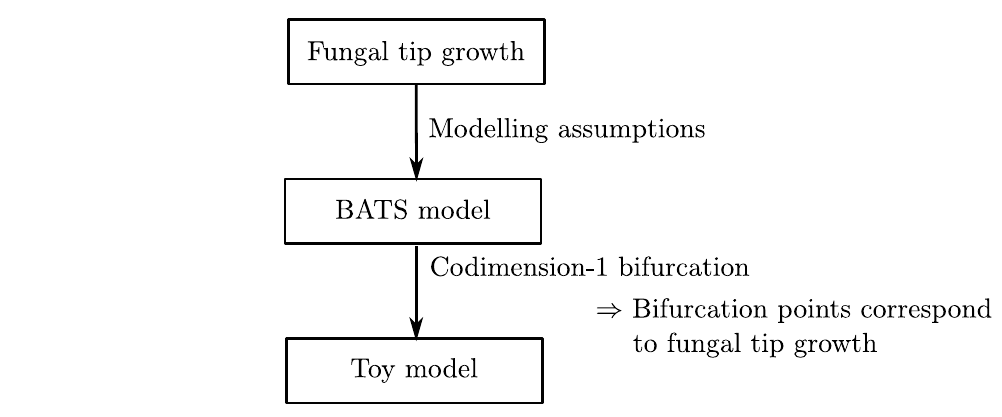}
\caption{Modelling schematic: From the biology of fungal tip growth modelling assumptions were derived which led to the Ballistic Ageing Thin viscous Sheet (BATS) model in \cite{JONBIO}. A global codimension-1 bifurcation in the governing equations of the BATS model was numerically indentified in \cite{JONIEEE}. The bifurcation points correspond to solutions describing fungal tip growth. \label{fig:3blocks}}
\end{center}
\end{figure}

It follows from analysis that the cell shape corresponding to steady tip growth solutions are characterised by the radius variable $r$ and its first derivative $\rho = r'$ with respect to the arc length $s$. Therefore, the toy model is designed to depend only on these two variables and a single parameter. Figure \ref{fig:rhorsteady} qualitatively describes the variables $r$ and $\rho$ corresponding to steady tip growth solutions. Figure \ref{fig:bifu_sol} illustrates how steady tip growth solutions arise through a codimension-1 bifurcation in both the BATS model and the toy model. The bifurcation in the BATS model forms the basis of the toy model presented in this paper.

\begin{figure}[h]
\begin{center}
\includegraphics[width=7cm]{./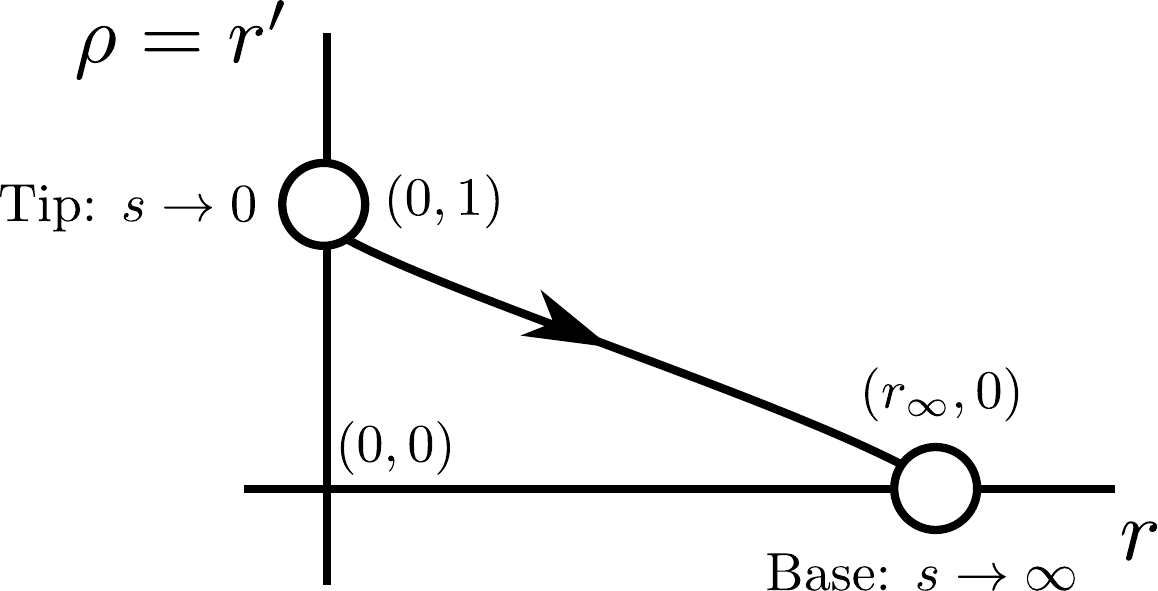}
\caption{Steady tip growth solutions in the $(\rho,r)$-plane: The independent variable is the arc length $s$ to the tip. The dependent variables $\rho$ and $r$ are related by $\rho=r'$ where the prime denotes taking the derivative with respect to $s$. Analysis reveals that the white circles correspond to the tip and base as indicated in Figure \ref{fig:cellshape}. At these limit points the governing ODE of the BATS model is not defined.   \label{fig:rhorsteady}}
\end{center}
\end{figure}

\begin{figure}[t]
\begin{center}
\includegraphics[width=9cm]{./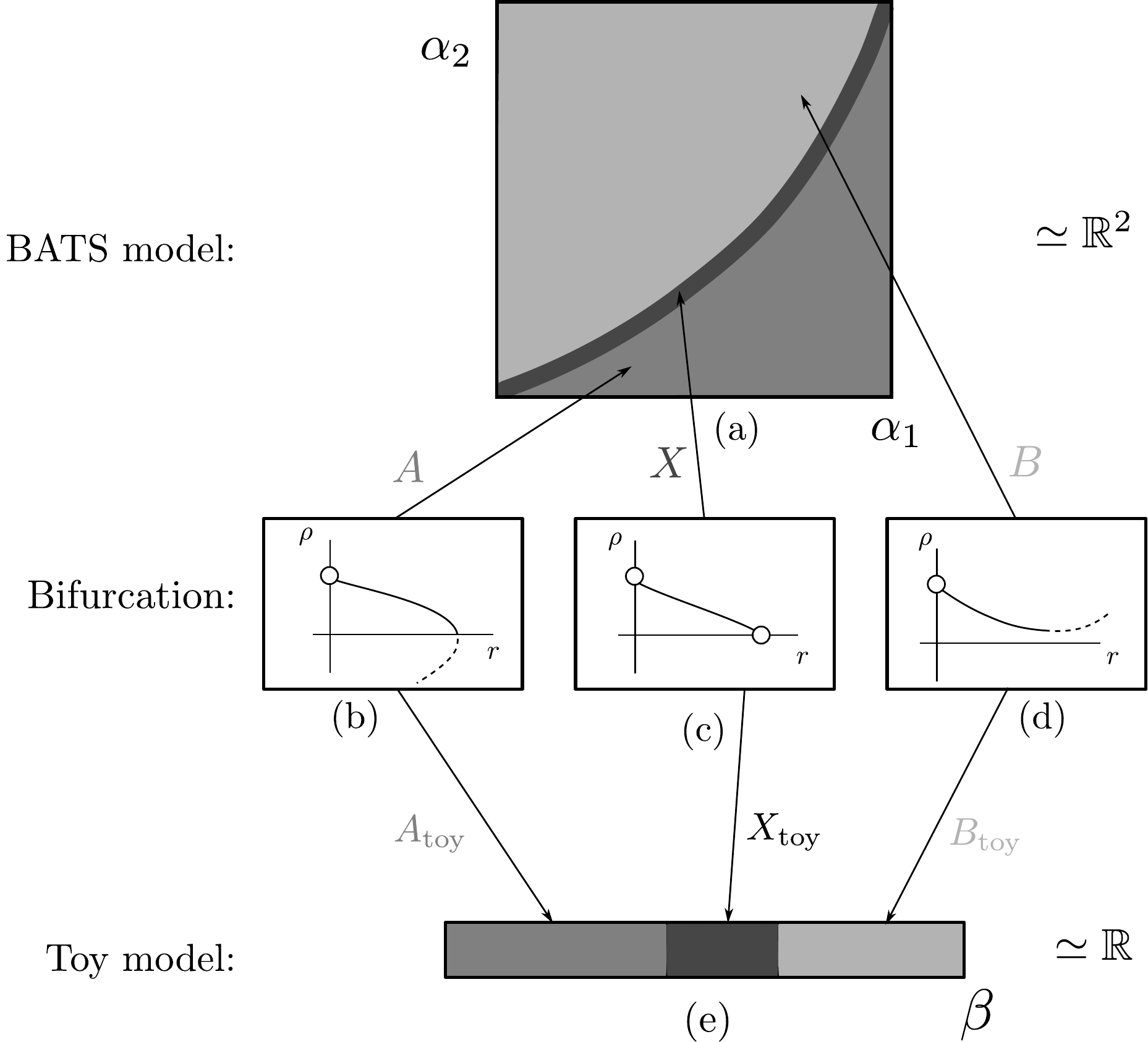}
\caption{Codimension-1 bifurcation with corresponding bifurcation diagram:  (a) and (e) are  qualitative representations of the parameter space corresponding to the parametrised solution set of the BATS model and toy model, respectively. In (b)-(d) the codimension-1 bifurcation is displayed. The white circles correspond to limit points. The solutions in  (b) are described by a sign change in $\rho$ and correspond to the parameter set $A$ in (a) and the parameter set $A_{\rm toy}$ in (e).  The solutions in (d) are described by a sign change in $d\rho/ds$ and correspond to the parameter set $B$ in (a) and the parameter set $B_{\rm toy}$ in (e). The solutions in (c) have no sign change in $\rho,d\rho/ds$ and correspond to the parameter set $X$ in (a) and $X_{\rm toy}$ in (e). \label{fig:bifu_sol}}
\end{center}
\end{figure}

\paragraph{Mathematical analysis.}

The existence of the codimension-1 bifurcation in the toy model will be proved by means of a topological method. The parameter set corresponding to the toy model is given by $\mathbb{R}_+$. A solution set which depends continuously on the parameter set is identified.  This solution set is used to define two open, non-empty, disjoint sets: $A_{\rm toy} \subset \mathbb{R}_{>0}$ corresponding to Figure \ref{fig:bifu_sol}b and $B_{\rm toy} \subset \mathbb{R}_+$ corresponding to Figure \ref{fig:bifu_sol}d. It is shown that $A_{\rm toy}$, $B_{\rm toy}$ are open, non-empty, disjoint sets and that $X_{\rm toy} := \mathbb{R}_{>0} \backslash (A_{\rm toy} \cup B_{\rm toy})$ corresponds to Figure \ref{fig:bifu_sol}c. A local analysis cannot be used to determine  the bifurcation points $X_{\rm toy}$ since the bifurcation is of a global nature. In the literature this topological method is referred to as topological shooting since the topological sets determines how to `shoot' trajectories to find the desired solution \cite{HAS12}. For another application of topological shooting see \cite{PEL95}.

\paragraph{Overview.}

This paper is structured as follows. In Section \ref{sec:matdes} we give a mathematical description of the tip growth cell shape.  In Section \ref{sec:toymaintheo} we present the toy model with main theorems and proofs. In Section \ref{secgov} we review the biological model for fungal cell growth, called the Ballistic Ageing Thin viscous Sheet (BATS) model. In Section \ref{sec:toyandbio} the relation between the toy model and the BATS model is explained. Finally, the extension of the proof for the toy model to the BATS model is discussed in Section \ref{sec:disc}. The technical proofs can be found in the Appendices.


\section{Mathematical description of tip growth cells \label{sec:matdes}}

The governing equations of the BATS model are given by a system of 5-dimensional non-linear first order ODEs. The BATS model should model fungal tip growth. Therefore, the BATS model is validated by proving the existence of solutions which resemble fungal tip growth, the so-called steady tip growth solutions. The aim of the toy model is to prove the existence of a toy analogue of steady tip growth solutions. As a prelude to the toy model we will give a mathematical description and heuristic explanation of steady tip growth solutions. Besides variables describing the cell's shape the BATS model has variables for cell wall ageing and cell wall thickness which in total amounts to 5 variables.
 An overview of the BATS model is given in Section \ref{secgov}. Since the toy model's dependent variables are only related to the cell's shape we describe steady tip growth solutions in terms of the cell shape variables.   

The cell wall is a surface of revolution where the generating curve in the $(r,z)$-plane is parametrised by its arc length $s$ to the tip, see Figure \ref{fig:cellshape}. For the ODE of the toy model the independent variable is $s$  and the dependent variables are $r$ and the first derivative of $r$ denoted by $\rho$. Observe that from the definition of the arc length $s$ it follows that $r'^2 + z'^2 = 1$, where the prime denotes the derivative with respect to the arc length $s$. By Figure \ref{fig:cellshape} we observe that $z' > 0$. Consequently, we get the equality
\begin{align}
z'= \sqrt{ 1-  r'^2}. \label{dzdsarc}
\end{align} 
Using \eqref{dzdsarc} it follows that the variables $\rho,r$ give a full description of the cell shape upto an initial condition in the $z$-variable.
 
The cell shape in Figure \ref{fig:cellshape} is described by two local conditions at the tip, $s \rightarrow 0$, global conditions, $0<s<\infty$ and a local condition at the base, $s \rightarrow \infty$:

\begin{itemize}
\item[S1] \textbf{Tip limits:}  The following limits are satisfied:
\begin{align*}
\lim_{s \rightarrow 0} \rho(s) =1, \; \;\; \lim_{s \rightarrow 0} r(s) =0,  \;\;\; \lim_{s \rightarrow 0 }\frac{\sqrt{1-\rho(s)^2}}{r(s)} = \eta_0 >0 .
\end{align*}

\textit{Heuristic explanation:} In Figure \ref{fig:cellshapetip} we displayed the limiting conditions at the tip for $(z,r)$ as we would expect from the cell shape in Figure \ref{fig:cellshape}.  Using \eqref{dzdsarc} we obtain the limiting condition for $\rho$. The last limit follows from the principal curvatures. We consider the $s$- and $\phi$-direction with $\phi$ the angular co-ordinate, see Figure \ref{fig:cellshapetip}. The principal curvature are given by  
\begin{align*}
\kappa_s = -\frac{r''}{z'}, \qquad \kappa_\phi =  \frac{z'}{r}, 
\end{align*}
see \cite{HW94} for the derivation.  The tip is locally concave, therefore, we must require that the principal curvatures are positive at the tip. Since the tip of the cell intersects with the axis of revolution it follows that the tip is an umbilical point:
\begin{align*}
\lim_{s \rightarrow 0} \kappa_s(s) = \lim_{s \rightarrow 0} \kappa_\phi(s). 
\end{align*} 
Hence, we only need to require that $\kappa_\phi$ is positive at the tip. Using \eqref{dzdsarc} we re-write $\kappa_\phi$ in terms of $\rho,r$ which yields the last limit in S1.
\end{itemize}

\begin{figure}[h]
\begin{center}
\includegraphics[width=7cm]{./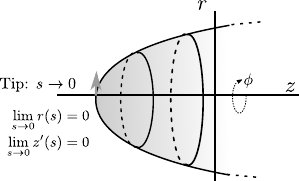}
\caption{Tip shape: The tip corresponds to $s \rightarrow 0$. The grey vector denotes the tangent vector at the tip. The limits in the figure follow from the cell shape. The cell is axially symmetric. The angular co-ordinate is given by $\phi$. \label{fig:cellshapetip}}
\end{center} \end{figure} 

\begin{itemize}
\item[S2] \textbf{Analyticity in $r^2$:} There exists $s_0>0$ and $G \in C^{\omega}( (-a,a), \mathbb{R}_{>0})$ with $a=r_*(s_0)^2$  such that  
\begin{align*}
\rho_*(s) = G(r_*(s)^2) \; \; \; \forall s \in (0,s_0).
\end{align*}

\textit{Heuristic explanation:} We expect $z$ to be an even function close to the tip when parametrized in $r$ due to axial symmetry and smoothness close to the tip. Then, using \eqref{dzdsarc} the condition S2 follows.  
\end{itemize}

\begin{itemize}
\item[S3] \textbf{Global constraints:} For all $s \in \mathbb{R}_{>0}$ the following inequalities are satisfied: 
\begin{align*}
\rho'(s) < 0, \; \; \; \rho(s) >0. 
\end{align*}

\textit{Heuristic explanation:} The generating curve in the $(r,z)$-plane is concave and monotone. Therefore, we require that $r''(s)>0, r'(s)>0$ for all $s \in \mathbb{R}_{>0}$ which is equivalent to S3
\end{itemize}
\begin{itemize}
\item[S4] \textbf{Base limits:} The following limits are satisfied:
\begin{align*}
\lim_{s \rightarrow \infty} \rho(s) =0, \; \;\; \lim_{s \rightarrow \infty} r(s) =r_{\infty}>0.
\end{align*}
\textit{Heuristic explanation:} We require S4 since we expect that the cell converges to a fixed width at the base. 
\end{itemize}

If $\rho,r$ do not satisfy all conditions S1-S4 then they do not describe idealized tip growth.


\section{Toy model and main theorems \label{sec:toymaintheo}}

In this section the toy model and main theorems with proofs are presented. In Section \ref{secgov} the BATS model is revised and in Section \ref{sec:toyandbio} the toy model is derived from the BATS model.

The BATS model has a functional dependency on a one dimensional smooth function $\mu$ called the viscosity function. The viscosity function is not specified since it is expected to be fungus dependent. In the toy model $g \in C^{\omega}(\mathbb{R})$ satisfying $g(v)>0$ for all $v \in \mathbb{R}_{>0}$ will take the role of the viscosity function. The governing equations of the toy model are given by
\begin{gather}
\begin{aligned}
\rho' &= \frac{3}{2} \frac{1-\rho^2}{{r}} \left( -  1+  \frac{ \sqrt{1-\rho^2}  (   \beta  {r}^2 g( {r}^2) + \rho    )   }{{r}}    \right) , \\
r' &=  \rho, 
\end{aligned}  \label{toygov}
\end{gather}
where $g \in C^{\omega}(\mathbb{R})$ satisfies $g(v)>0$ for all $v \in \mathbb{R}_{>0}$ and $\beta \in \mathbb{R}_{>0}$. The phase space is given by
\begin{align*}
M_0 = \{ (\rho,r) \in (-1,1) \times \mathbb{R}_{>0} \}. 
\end{align*} 
We refer to the dynamical system corresponding to \eqref{toygov} as the \emph{toy model}. The solutions of the toy model \eqref{toygov} which correspond to tip growth follow from Section \ref{sec:matdes}:

\begin{defi}[Toy steady tip growth solution]  $(\rho,r)$ is a \emph{toy steady tip growth solution} if it is a solution of the toy model \eqref{toygov} that satisfies conditions S1-S4.
\label{def:toy}
\end{defi}

We now present the main theorems for the toy model \eqref{toygov}:

\begin{theo}[Existence of toy steady tip growth solutions] Let $g \in C^{\omega}(\mathbb{R})$  be a function such that 
\begin{gather}
\begin{aligned}
g(v)>0, & \qquad    \frac{\partial g(v)}{ \partial v}\geq 0, \qquad \frac{\partial^2 (v^2 g(v^2))}{ \partial v^2}>0 \qquad \forall v \in \mathbb{R}_{>0} , \\
\lim_{v \rightarrow \infty }v g(v^2) &= \infty .
\end{aligned}  \label{gfuncc}
\end{gather} Then there exists a $\beta>0$ such that the toy model \eqref{toygov} has a unique  toy steady tip growth solution as specified in Definition \ref{def:toy}.  \label{theo:maintoy}
\end{theo}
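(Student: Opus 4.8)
The plan is to follow, for each $\beta>0$, the distinguished solution of \eqref{toygov} singled out by the tip conditions S1--S2, and to classify its global behaviour by topological shooting in the single parameter $\beta$, producing open disjoint ``overshoot'' and ``undershoot'' sets $A_{\rm toy},B_{\rm toy}\subset\mathbb{R}_{>0}$; the toy steady tip growth solutions will correspond to $X_{\rm toy}:=\mathbb{R}_{>0}\setminus(A_{\rm toy}\cup B_{\rm toy})$, which a connectedness argument shows is non-empty. First I would study the singular point $s\to0$: substituting $\rho=G(r^2)$ (as in S2) together with $r'=\rho$ into \eqref{toygov} turns the problem into a scalar ODE with a regular singularity at $r^2=0$; evaluating it there forces $G(0)=1$, $G'(0)=-\tfrac1{18}$, and hence $\eta_0=\lim_{s\to0}\sqrt{1-\rho^2}/r=\tfrac13$, after which the remaining Taylor coefficients of $G$ are fixed by a non-resonant recursion and converge by a majorant estimate. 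This gives, for every $\beta\ge0$, a unique solution $(\rho,r)$ of \eqref{toygov} satisfying S1--S2 (the \emph{tip solution}), depending continuously together with its Taylor data on $\beta$, and with $\rho\in(0,1)$ and $\rho'<0$ near the tip. Conversely any toy steady tip growth solution satisfies S1--S2 and hence coincides with the tip solution, so the uniqueness claim in Theorem \ref{theo:maintoy} is automatic once existence is established. I expect this (routine but computational) step to go into an appendix.

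\textbf{Step 2: the trichotomy and its topology.} Write $B_\beta$ for the bracket in \eqref{toygov}, so that on $M_0$ one has $\rho'<0\iff B_\beta<0$, and set
\[
\tau(\beta)=\sup\{\,s>0\,:\,\rho>0\ \text{and}\ \rho'<0\ \text{on}\ (0,s)\,\}\ \in\ (0,\infty].
\]
If $\tau(\beta)=\infty$ put $\beta\in X_{\rm toy}$; if $\tau(\beta)<\infty$ then, since $\rho>0$ and $\rho'<0$ are open conditions, at $s=\tau$ either $\rho(\tau)=0$ (and then $\rho'(\tau)<0$) --- put $\beta\in A_{\rm toy}$ --- or $\rho'(\tau)=0$ and $\rho(\tau)>0$ --- put $\beta\in B_{\rm toy}$. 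The remaining possibility $\rho(\tau)=\rho'(\tau)=0$ cannot occur at finite $\tau$, because then $(0,r(\tau))$ with $r(\tau)>0$ is an equilibrium of the smooth field and, by uniqueness of solutions, can only be approached as $s\to\infty$. Thus $A_{\rm toy},B_{\rm toy},X_{\rm toy}$ partition $\mathbb{R}_{>0}$. Openness of $A_{\rm toy}$ follows from continuous dependence on $\beta$ on compact time intervals $[\varepsilon,\tau]$, the transversal crossing $\rho(\tau)=0$, $\rho'(\tau)<0$, and the $\beta$-uniform sign control $\rho'<0$ near the tip from Step 1. Openness of $B_{\rm toy}$ needs $\rho''(\tau)>0$ at the crossing $\rho'(\tau)=0$; a short computation reduces this to $\beta P'(r(\tau))\sqrt{1-\rho(\tau)^2}>1$ with $P(v):=v^2g(v^2)$, which, using $\partial_v g\ge0$, follows from the a priori bound $w:=\rho\sqrt{1-\rho^2}/r<\tfrac12$ along the tip solution while $B_\beta\le0$. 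I would prove this bound by an invariant-region argument: $w\to\tfrac13$ at the tip, and on $\{w=\tfrac12,\ B_\beta\le0\}$ one checks $w'<0$, the inequality reducing to $\rho^2<\tfrac32$.

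\textbf{Step 3: the two extreme regimes and conclusion.} Next I would show $A_{\rm toy}$ and $B_{\rm toy}$ are non-empty. For $\beta$ large, the tip asymptotics $\sqrt{1-\rho^2}\sim r/3$, $\rho\sqrt{1-\rho^2}/r\to\tfrac13$ give $B_\beta\approx-\tfrac23+\tfrac13\beta P(r)$ for small $r$, so $B_\beta$ hits $0$ at $r=O(\beta^{-1/2})$, while $\rho$ has decreased by only $O(\beta^{-1})$; hence $\rho(\tau)>0$ and $\beta\in B_{\rm toy}$. For $\beta$ small, at $\beta=0$ one has $B_0=-1+w<-\tfrac12$ by Step 2, so $\rho$ decreases monotonically, and the equation for $d\rho/dr$ precludes $\rho>0$ for all $r$, so $\rho$ reaches $0$ at a finite radius with $\rho'<0$; continuous dependence on $\beta$ on the relevant compact set transfers this to small $\beta>0$, so $\beta\in A_{\rm toy}$. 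Since $\mathbb{R}_{>0}$ is connected and $A_{\rm toy},B_{\rm toy}$ are open, disjoint and non-empty, $X_{\rm toy}\neq\varnothing$. Finally, for $\beta\in X_{\rm toy}$ the tip solution satisfies S1--S2 (Step 1) and S3 (definition of $X_{\rm toy}$); for S4, $\rho$ decreases to some $\rho_\infty\ge0$ and $r$ increases, if $r\to\infty$ then $\beta rg(r^2)\to\infty$ with $\sqrt{1-\rho^2}$ bounded below forces $B_\beta\to+\infty$, a contradiction, so $r\to r_\infty<\infty$, whence $\rho_\infty=0$ (else $r'=\rho$ would force $r\to\infty$) and $\rho'\to\tfrac3{2r_\infty}B_\beta(0,r_\infty)$ combined with $\liminf|\rho'|=0$ forces $\beta r_\infty g(r_\infty^2)=1$, i.e. $r_\infty=r_*(\beta)>0$, the unique positive root of $vg(v^2)=1/\beta$; one checks its Jacobian has negative determinant, so it is a saddle --- this is why the bifurcation is of codimension $1$. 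Hence the tip solution for such $\beta$ is a toy steady tip growth solution, unique by Step 1.

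\textbf{Main obstacle.} The topological skeleton of Step 2 and the connectedness argument are short; the real work is all the a priori control near the singular point $s=0$ --- the existence, uniqueness and $\beta$-continuity of the analytic tip solution, the sharp tip asymptotics needed to land $\beta$ in $B_{\rm toy}$ for large $\beta$, and the bound $w<\tfrac12$ underlying transversality. Making the large-$\beta$ analysis rigorous, since it demands quantitative estimates of the tip solution exactly where \eqref{toygov} degenerates, is the step I expect to be hardest.
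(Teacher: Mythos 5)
Your proposal is correct and follows the same topological-shooting skeleton as the paper (construct the unique tip solution for each $\beta\ge0$; classify into overshoot set $A_{\rm toy}$, undershoot set $B_{\rm toy}$, and $X_{\rm toy}$; show $A_{\rm toy}, B_{\rm toy}$ are open, disjoint and non-empty; conclude $X_{\rm toy}\ne\varnothing$ by connectedness; show $\beta\in X_{\rm toy}$ gives S4). There are, however, several genuine differences in execution which are worth noting.

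\emph{Tip solution (Step 1).} You construct the tip solution by an analytic power-series ansatz $\rho=G(r^2)$ with a non-resonant recursion and majorant estimates. The paper instead changes variables to $(\eta,w)=(\sqrt{1-\rho^2}/r,\,r^2)$ and rescales time by $d\tau/ds=\rho/r$; in these coordinates $(\tfrac13,0)$ becomes a hyperbolic saddle and the tip solution is exactly its one-dimensional unstable manifold (Appendix A). The manifold route gives analyticity, uniqueness, and smooth $\beta$-dependence (the latter via a suspended system) essentially for free; your recursion-plus-majorant route works too but requires you to verify the $\beta$-uniformity of the majorants by hand.

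\emph{Openness of $B_{\rm toy}$ (Step 2).} This is your most interesting deviation. You prove $\rho''(\tau)>0$ directly from the bound $w:=\rho\sqrt{1-\rho^2}/r<\tfrac12$, obtained by an invariant-region argument on the slab $\{w<\tfrac12,\ B_\beta\le0\}$ (I checked the boundary computation: using $B_\beta\ge-\tfrac12$ on $\{w=\tfrac12\}$ it does reduce to $\rho^2<\tfrac32$, which is automatic). The paper instead computes $\rho''(\tau)\ge0$ from monotonicity of $\rho'$, and then excludes $\rho''(\tau)=0$ by showing $\rho'''(\tau)>0$, which is where the hypothesis $\partial^2(v^2g(v^2))/\partial v^2>0$ enters. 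Your invariant-region argument uses only $g>0$ and $g'\ge0$ and does not touch the second-derivative condition at all; since the other lemmas feeding Theorem~\ref{theo:maintoy} (Lemma~\ref{lem:X} and the rest of Lemma~\ref{lem:AB}) also do not use it, your route would actually prove Theorem~\ref{theo:maintoy} under a weaker hypothesis set. That is a non-trivial improvement you should make explicit if you write this up.

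\emph{Disjointness (Step 2).} Your argument that $\rho(\tau)=\rho'(\tau)=0$ cannot occur in finite time because $(0,r(\tau))$ would be an equilibrium reached in finite time (contradicting uniqueness of solutions) is cleaner and more transparent than the paper's version.

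\emph{Non-emptiness of $B_{\rm toy}$ (Step 3).} Here your approach is genuinely harder than necessary. You propagate the tip asymptotics $\sqrt{1-\rho^2}\sim r/3$ out to $r=O(\beta^{-1/2})$ and argue $\rho$ has only dropped by $O(\beta^{-1})$ when $B_\beta$ turns positive; you correctly flag that making this uniform in $\beta$ near the degenerate point is the hard part. The paper avoids the quantitative tip asymptotics entirely: for $\beta$ large enough the $\rho$-component of the vector field satisfies $\rho'>0$ on the whole line $\{\rho=\tfrac12\}$, hence $\rho_\beta>\tfrac12$ for all $s$ by a simple global barrier, which already forces $\beta\in B_{\rm toy}$. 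I would recommend swapping your local-asymptotic argument for this global one; it renders the ``hardest step'' you flag unnecessary. Your $\beta=0$ argument for $A_{\rm toy}\ne\varnothing$ (extend to $\beta=0$, show $\rho'<0$ globally via the nullcline, and perturb) matches the paper.

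\emph{S4 for $\beta\in X_{\rm toy}$.} Your argument that $r\to\infty$ is impossible because $\beta rg(r^2)\to\infty$ would force $B_\beta>0$, and that $\rho_\infty=0$ with $\beta r_\infty g(r_\infty^2)=1$, is the same as the paper's Lemma~\ref{lem:X} with somewhat more detail.

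In sum: correct, same architecture, but several of your technical lemmas are proved differently. The invariant-region replacement for the $\rho'''$ computation is an improvement; the asymptotic argument for large $\beta$ is a detour you can skip using the barrier $\{\rho=\tfrac12\}$.
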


\begin{theo}[Topology of parameter set] Let $g \in C^{\omega}(\mathbb{R})$ satisfy \eqref{gfuncc}.   The set of all $\beta >0$ such that the toy model \eqref{toygov} has a toy steady tip growth solution as specified in Definition \ref{def:toy} is a closed set with empty interior.  \label{theo:toptoy}
\end{theo}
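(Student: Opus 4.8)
The plan is to treat the two assertions separately. Closedness will be essentially formal once I partition the parameter space, and the empty–interior statement will follow from a monotonicity argument that in fact shows the set is at most a single point.

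\textbf{Closedness.} First I would desingularise the tip: conditions S1--S2 should single out, for each $\beta>0$, a unique analytic orbit $\gamma_\beta=(\rho_\beta,r_\beta)$ of \eqref{toygov} issuing from the singular point $(\rho,r)=(1,0)$, and after the blow-up $\beta\mapsto\gamma_\beta$ depends continuously on $\beta$, uniformly on compact arc-length intervals. Write $\Phi(\rho,r;\beta)$ for the right-hand side of the $\rho$-equation in \eqref{toygov}. Let $A_{\rm toy}$ be the set of $\beta$ for which $\rho_\beta(s_1)<0$ at some finite $s_1$, and $B_{\rm toy}$ the set of $\beta$ for which $\gamma_\beta$ crosses the nullcline $\{\Phi=0\}$ transversally into $\{\Phi>0\}$ at a point with $\rho_\beta>0$. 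Both sets are open, since a transversal crossing at a finite arc length persists under small perturbations of $\beta$; a short phase-plane lemma, drawing on \eqref{gfuncc}, shows they are disjoint. I would then show that if $\beta\notin A_{\rm toy}\cup B_{\rm toy}$ the orbit stays in $\{\rho_\beta>0,\;\rho_\beta'\le 0\}$, so $\rho_\beta$ is monotone, $\gamma_\beta$ remains in a fixed region of $M_0$, and its $\omega$-limit set reduces to the equilibrium $(0,r_\infty(\beta))$ determined by $\beta\,r_\infty g(r_\infty^2)=1$ (this root exists, is unique, and $r_\infty(\beta)>0$, because $v\mapsto v g(v^2)$ is strictly increasing with range $\mathbb{R}_{>0}$, by \eqref{gfuncc}); analyticity of $\gamma_\beta$ then upgrades $\rho_\beta\ge 0,\;\rho_\beta'\le 0$ to the strict inequalities of S3, so $\gamma_\beta$ satisfies S1--S4. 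Conversely a toy steady tip growth solution is some $\gamma_\beta$ and lies in neither $A_{\rm toy}$ nor $B_{\rm toy}$. Hence the set in the theorem equals $X_{\rm toy}:=\mathbb{R}_{>0}\setminus(A_{\rm toy}\cup B_{\rm toy})$, which is closed.

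\textbf{Empty interior.} Here I would reparametrise \eqref{toygov} by $r$ on the stretch where $\rho_\beta>0$, getting $d\rho/dr=F(\rho,r;\beta)=\Phi(\rho,r;\beta)/\rho$. The key point is that $\partial F/\partial\beta=\tfrac{3}{2}\frac{(1-\rho^2)^{3/2}}{\rho}\,g(r^2)>0$ on $(0,1)\times\mathbb{R}_{>0}$, so the reduced field is strictly increasing in $\beta$; together with the tip analysis, which gives $\rho_\beta(r)=1-\tfrac{1}{18}r^2+O(r^4)$ with $\beta$-independent leading terms (in particular $\eta_0=1/3$), a comparison argument yields $\rho_{\beta'}(r)>\rho_\beta(r)$ for all $r>0$ in the common domain whenever $\beta<\beta'$. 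Now suppose $X_{\rm toy}$ contained two points $\beta<\beta'$. Since $r_\infty$ is strictly decreasing in $\beta$ we have $r_\infty(\beta')<r_\infty(\beta)$, and since both $\gamma_\beta,\gamma_{\beta'}$ satisfy S3--S4 the functions $\rho_\beta,\rho_{\beta'}$ are positive up to $r_\infty(\beta),r_\infty(\beta')$ respectively, with $\rho_{\beta'}(r)\to 0$ as $r\uparrow r_\infty(\beta')$; hence $0=\lim_{r\uparrow r_\infty(\beta')}\rho_{\beta'}(r)\ge\rho_\beta(r_\infty(\beta'))>0$, a contradiction. Therefore $X_{\rm toy}$ has at most one element, so it is closed with empty interior (and, combined with Theorem \ref{theo:maintoy}, it is a single point whenever \eqref{gfuncc} holds).

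\textbf{Main obstacle.} The routine parts are the openness and disjointness of $A_{\rm toy},B_{\rm toy}$ and the monotone comparison; the hard part will be the analysis at the tip singularity $(\rho,r)=(1,0)$. I expect to have to desingularise it, prove existence, uniqueness and continuous $\beta$-dependence of the analytic orbit $\gamma_\beta$ and extract its $\beta$-independent leading asymptotics ($\eta_0=1/3$), and then justify the comparison principle across $r=0$, where $\partial F/\partial\rho$ is unbounded: the persistence of the strict ordering $\rho_{\beta'}>\rho_\beta$ as $r\to 0^+$ rests on weighing the rate at which $\rho_{\beta'}(r)-\rho_\beta(r)\to 0$ against the blow-up rate of $\int_t^r\partial F/\partial\rho$. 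A secondary but essential difficulty, needed already for the partition, is ruling out tangencies of $\gamma_\beta$ to the nullcline $\{\Phi=0\}$, which is what allows $\rho_\beta'\le 0$ to be improved to $\rho_\beta'<0$ and keeps $\gamma_\beta$ inside $M_0$ for all finite arc length; this is presumably where the convexity hypothesis $\partial^2(v^2 g(v^2))/\partial v^2>0$ in \eqref{gfuncc} enters.
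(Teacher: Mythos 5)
Your argument is correct and matches the paper's strategy: closedness follows from $A_{\rm toy}$ and $B_{\rm toy}$ being open and disjoint (Lemma \ref{lem:AB}), and the empty interior is obtained from the opposing monotonicities in $\beta$ of the $r$-parametrised $\rho$-curve (Lemma \ref{lem:varrho}, which the paper establishes via $\lim_{r\to 0}\partial\varrho/\partial\beta=0$ together with $\partial^2\varrho/\partial\beta\partial r>0$ on the zero set of $\partial\varrho/\partial\beta$) and of the base radius determined by $\beta r_\infty g(r_\infty^2)=1$ (Lemma \ref{lem:Rb}). You also correctly identify that the argument actually pins $X_{\rm toy}$ down to a single point, that the tip desingularisation and persistence of the strict ordering near $r=0$ are the real technical work (handled in the paper through the $(\eta,w)$-coordinates and unstable-manifold theory in Appendix \ref{sec:exuntoytip}), and that the convexity hypothesis on $v^2 g(v^2)$ is what rules out degenerate tangencies in the proof that $B_{\rm toy}$ is open.
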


The conditions in \eqref{gfuncc} are clearly satisfied when $g$ is a positive constant function. Hence, Theorem \ref{theo:maintoy} does not concern an empty set of functions. Throughout this  section we consider the toy model \eqref{toygov} with $g \in C^{\omega}(\mathbb{R})$ satisfying \eqref{gfuncc}.

The technique used to prove Theorem \ref{theo:toptoy} will rely on the planarity of the toy model \eqref{toygov}. Hence, extending the proof of Theorem \ref{theo:toptoy} to the five dimensional BATS model would require additional properties.

Theorems \ref{theo:maintoy} and \ref{theo:toptoy} rely on the existence of a family of solutions, called toy tip solutions, which are solutions which satisfy the properties of a toy steady tip growth solutions as given by Definition \ref{def:toy} on an $s$-interval $(0,s_0)$. These solutions undergo the bifurcation in Theorem \ref{theo:toptoy}. We present the definition of toy tip solutions with a corresponding existence and uniqueness result in Section \ref{sec:toytip}. The corresponding proof is technical and will be presented in Appendix \ref{sec:exuntoytip}. Proof overviews of Theorems \ref{theo:maintoy} and \ref{theo:toptoy} are presented in Section \ref{sec:maintoy} and Section \ref{sec:toptoy}, respectively.  Proofs of the lemmas for Theorems \ref{theo:maintoy} and \ref{theo:toptoy} are  presented in Appendix \ref{sec:tech}. 


\subsection{Toy tip solutions}
\label{sec:toytip}

We define the solution set which will undergo the global bifurcation:

\begin{defi}[Toy tip solution] A solution $(\rho,r)$ of the toy model \eqref{toygov} is a \emph{toy tip solution} if and only if it satisfies S1,S2 and if there exists an $s_0 >0$ such that
\begin{align}
\rho'(s)<0, \qquad \rho(s) >0 \qquad \forall s \in (0,s_0).  \label{localtoytipineq}
\end{align}
\label{def:toytip}
\end{defi}
It follows from Definition \ref{def:toy} and Definition \ref{def:toytip} that
\begin{align}
\text{toy steady tip growth solutions } \subset \text{ toy tip solutions}. \label{subset2}
\end{align}
The property \eqref{subset2} is crucial since we will prove the existence of toy steady tip growth solutions as given by Definition \ref{def:toy} as the result of a bifurcation of toy tip solutions as given by Definition \ref{def:toy}.  

The proofs of Theorems \ref{theo:maintoy} and \ref{theo:toptoy} rely on the construction of toy tip solutions.  The construction of toy tip solutions relies on a change of variables such that in the new variables proving the existence  of toy tip solutions for a $\beta$ is equivalent to proving the existence of an unstable manifold. Observe that the toy model \eqref{toygov} does not have an equilibrium corresponding to the tip limits condition S1.   The uniqueness of toy tip solutions will be equivalent to showing that the unstable manifold is 1-dimensional. Due the technicalities involved the toy tip solution construction theorem is presented in Appendix \ref{sec:exuntoytip}. 
Denote by $(\rho_\beta,r_\beta)$ the tip solution corresponding to the parameter $\beta$.

 \begin{cor}
For all $\beta \geq 0 $ there exists a unique toy solution $x_\beta$ specified by Definition \ref{def:toytip}. In addition, the map $F_{\rm toy}: \beta \mapsto (\rho_\beta,r_\beta)$ is continuous in $\beta$. \label{cor:toytip}
\end{cor}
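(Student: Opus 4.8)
The plan is to reduce the corollary to the toy tip solution construction theorem announced for Appendix \ref{sec:exuntoytip}, which produces, for each admissible parameter, a toy tip solution as an orbit on an unstable manifold of an auxiliary system obtained after a change of variables. First I would recall that change of variables: since the toy model \eqref{toygov} has no equilibrium at the tip (the right-hand side degenerates as $r \to 0$), one rescales the independent variable and introduces a coordinate such as $\eta = \sqrt{1-\rho^2}/r$ together with $r$ (or $r^2$), so that the tip limits S1 become a genuine hyperbolic equilibrium $p_\beta$ of the transformed vector field, with $\eta \to \eta_0$, $r \to 0$. In these coordinates, condition S2 (analyticity in $r^2$) is exactly the statement that the toy tip solution lies on the analytic unstable manifold $W^u(p_\beta)$, and the local inequalities \eqref{localtoytipineq} are satisfied automatically on a sufficiently short interval by continuity, because $\rho \to 1$, $\rho' \to $ (a negative value forced by the sign of the linearization). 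So existence of a toy tip solution for a given $\beta \geq 0$ is equivalent to $W^u(p_\beta)$ being nonempty, and uniqueness is equivalent to $\dim W^u(p_\beta) = 1$; both are delivered by the construction theorem via a spectral computation of the Jacobian at $p_\beta$, which should have exactly one eigenvalue with positive real part (the stable/center directions correspond to the trivial family and are excluded by the defining inequalities).

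Next I would address continuity of $F_{\rm toy}$. The equilibrium $p_\beta$ depends analytically — hence continuously — on $\beta$, since it is cut out by algebraic equations with $\beta$ entering polynomially; the simple eigenvalue with positive real part and its eigenvector likewise vary continuously, by standard perturbation theory for simple eigenvalues. Then the unstable manifold $W^u(p_\beta)$, and in particular the specific orbit parametrized so that it realizes the prescribed limit $\eta_0$ (note $\eta_0$ is itself determined by $\beta$, or one fixes a normalization), depends continuously on $\beta$ in the $C^0_{\rm loc}$ topology on compact $s$-intervals. This is the usual continuous (indeed smooth) dependence of unstable manifolds on parameters — e.g. via the graph transform or the Irwin/Hadamard–Perron setup — combined with continuous dependence of solutions of ODEs on initial conditions and parameters. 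Pulling back through the change of variables, which is an analytic diffeomorphism depending analytically on $\beta$ away from $r=0$ and extends continuously to the tip, yields that $\beta \mapsto (\rho_\beta, r_\beta)$ is continuous.

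Finally, a small bookkeeping point: the corollary is stated for $\beta \geq 0$ while the toy model \eqref{toygov} and the standing hypotheses take $\beta \in \mathbb{R}_{>0}$; I would note that the transformed system and the equilibrium $p_\beta$ with its hyperbolicity persist at $\beta = 0$ (the term $\beta r^2 g(r^2)$ simply vanishes there), so the construction theorem applies verbatim on the closed half-line, giving the claimed endpoint. Assembling these pieces — existence and uniqueness from the spectral picture of $p_\beta$, continuity from parametric dependence of hyperbolic equilibria and their unstable manifolds — gives the corollary. The main obstacle is not any single estimate but making the change of variables precise enough that S1 and S2 translate cleanly into "equilibrium" and "unstable manifold," and in particular checking the eigenvalue count at $p_\beta$ is exactly one positive; this is precisely the technical content deferred to Appendix \ref{sec:exuntoytip}, so here the work is to cite it correctly and to verify that continuity is inherited at each stage of the reduction.
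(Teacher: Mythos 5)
Your proposal follows essentially the same route as the paper: change variables to $(\eta,w)=(\sqrt{1-\rho^2}/r,\,r^2)$ with a rescaled independent variable, so that the tip becomes a hyperbolic saddle with a one-dimensional unstable manifold, and then get existence, uniqueness, and continuity from the parametric dependence of that unstable manifold (the paper implements the last step by appending $\beta'=0$ and invoking uniqueness of the center–unstable manifold of the extended system). One small correction to your setup: in the paper's coordinates the equilibrium is $q_0=(1/3,0)$, \emph{independent} of $\beta$ (so $\eta_0=1/3$ is forced, not $\beta$-dependent), and only the unstable eigendirection moves with $\beta$; this only makes the continuity step easier than you anticipated.
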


The proof of Corollary \ref{cor:toytip} is given in Appendix \ref{p:cor:toytip}. Observe that in Corollary \ref{cor:toytip} we consider $\beta \geq 0 $ while for the toy model \eqref{toygov} we only considered $\beta > 0 $. For $\beta =0 $ the ODE \eqref{toygov} becomes degenerate which is used to prove the non-emptiness of a set required for our topological argument.


\subsection{Overview proof of Theorem \ref{theo:maintoy}}
\label{sec:maintoy}

Denote the toy tip solution corresponding to $\beta$ by $(\rho_\beta,r_\beta)$. We consider the following subsets of the parameter space:
\begin{gather}
\begin{aligned}
A_{\rm toy} &:= \{  \beta  \in \mathbb{R}_{>0} \; :  \; \exists  s_0 \in \mathbb{R}_{>0}  \; \;  \rho_\beta(s) \rho_\beta'(s) <0 \; \forall s \in (0,s_0), \;  \rho_\beta(s_0) =0       \} ,\\
B_{\rm toy} &:= \{  \beta \in \mathbb{R}_{>0} \; :  \; \exists  s_0 \in \mathbb{R}_{>0}  \; \;  \rho_\beta(s) \rho_\beta'(s) <0 \; \forall s \in (0,s_0), \;  \rho_\beta'(s_0) =0   \}.
\end{aligned} \label{ABtoy}
\end{gather}
Observe that the solutions corresponding to $A_{\rm toy}$ and  $B_{\rm toy}$ are described by Figure \ref{fig:Amu} and Figure \ref{fig:Bmu}, respectively. As a result of the toy model \eqref{toygov} the solutions corresponding to $A_{\rm toy}$ and  $B_{\rm toy}$ are described by Figure \ref{fig:bifu_sol}b and Figure \ref{fig:bifu_sol}c, respectively. We define 
\begin{align}
X_{\rm toy} :=  \mathbb{R}_{>0} \setminus (A_{\rm toy} \cup B_{\rm toy}). \label{Xtoy}
\end{align}
By the definition of $A_{\rm toy},B_{\rm toy}$ in \eqref{ABtoy} it follows that $X_{\rm toy}$ satisfies S3 of Definition \ref{def:toy}, see Figure \ref{fig:Xmu}. Furthermore, if toy steady tip growth solutions exist as given by Definition \ref{def:toy} then they must correspond to $(\rho_\beta,r_\beta)$ with $\beta \in X_{\rm toy}$.

\begin{figure}[h]
\begin{center}
\includegraphics[width=4.5cm]{./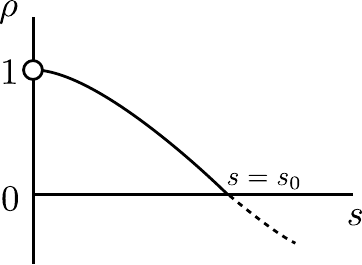}
\caption{The $\rho$-variable characterizing $A_{\rm toy}$.  The black curve corresponds to $\rho_\beta(\, \cdot \, )$ with $\beta \in A_{\rm toy}$ and $\circ$ is a limit point of the solution curve. Observe that $\rho_\beta(\, \cdot \, )$ has a zero at $s=s_0$. For $s>s_0$ the behaviour of the solution curve is not relevant for the classification. Hence, the black curve is continued with a dashed curve. \label{fig:Amu}}
\end{center}
\end{figure}

\begin{figure}[h]
\begin{center}
\includegraphics[width=4.5cm]{./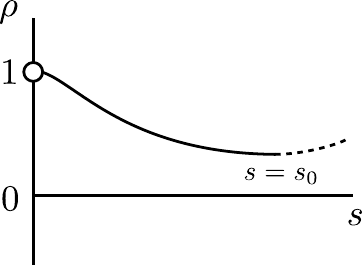}
\caption{The $\rho$-variable characterizing set $B_{\rm toy}$.  The black curve corresponds to $\rho_\beta(\, \cdot \, )$ with $\alpha \in B_{\rm toy}$ and $\circ$ is a limit point of the solution curve. Observe that $\rho_\beta'(\, \cdot \, )$ has a zero at $s=s_0$. For $s>s_0$ the behaviour of the solution curve is not relevant for the classification. Hence, the black curve is continued with a dashed curve.\label{fig:Bmu}}
\end{center}
\end{figure}

\begin{figure}[h]
\begin{center}
\includegraphics[width=4.5cm]{./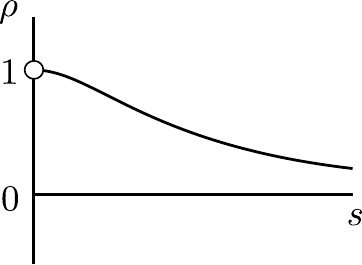}
\caption{The $\rho$-variable chacterizing $X_{\rm toy}$. The black curve corresponds to $\rho_\beta(\, \cdot \, )$ with $\beta \in X_{\rm toy}$ and $\circ$ is a limit point of the solution curve. Observe that if $\beta \in X_{\rm toy}$ then $\rho_\beta(\, \cdot \, ) \rho_\beta'(\, \cdot \, )$ has no zeroes. \label{fig:Xmu}}
\end{center} \end{figure}

\begin{lem} Let $ X_{\rm toy} \neq \emptyset$. If  $\beta \in X_{\rm toy}$ then $(\rho_\beta,r_\beta)$ is a toy steady tip growth solution as specified in Definition \ref{def:toy}. \label{lem:X}
\end{lem}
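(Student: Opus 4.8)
The plan is to show that membership in $X_{\rm toy}$ forces the toy tip solution $(\rho_\beta,r_\beta)$ to satisfy all of S1--S4. Conditions S1 and S2 come for free: by definition (Definition \ref{def:toytip}) every toy tip solution already satisfies S1 and S2 together with the local inequalities \eqref{localtoytipineq} on some interval $(0,s_0)$. So the real content is to upgrade the \emph{local} sign conditions to the \emph{global} condition S3 ($\rho_\beta'(s)<0$ and $\rho_\beta(s)>0$ for all $s\in\mathbb{R}_{>0}$) and then to extract the base limits S4 from the resulting monotonicity.

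The first step is to establish S3 by a maximal-interval argument. Let $s_0$ be the supremum of all $s>0$ such that $\rho_\beta(\,\cdot\,)\rho_\beta'(\,\cdot\,)<0$ on $(0,s)$; on this interval $\rho_\beta>0$ and $\rho_\beta'<0$ by continuity together with the tip limit $\rho_\beta(0^+)=1>0$ from S1. If $s_0<\infty$, then at $s_0$ either $\rho_\beta(s_0)=0$ or $\rho_\beta'(s_0)=0$ (one cannot have strict inequalities persist past a finite supremum, and both cannot fail simultaneously at an interior point by the ODE structure). The first alternative puts $\beta\in A_{\rm toy}$ and the second puts $\beta\in B_{\rm toy}$, either way contradicting $\beta\in X_{\rm toy}=\mathbb{R}_{>0}\setminus(A_{\rm toy}\cup B_{\rm toy})$. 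Hence $s_0=\infty$, which is precisely S3. One must also check the solution does not leave the phase space $M_0$ in finite $s$: since $\rho_\beta\in(0,1)$ is trapped and $r_\beta'=\rho_\beta\in(0,1)$, the solution exists for all $s>0$ and $r_\beta$ stays positive and increasing.

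It remains to deduce S4 from S3. Since $\rho_\beta$ is positive and strictly decreasing on $(0,\infty)$, it has a limit $\rho_\infty\in[0,1)$; since $r_\beta$ is increasing it has a limit $r_\infty\in(0,\infty]$. I would argue $\rho_\infty=0$: if $\rho_\infty>0$ then $r_\beta(s)\sim \rho_\infty s\to\infty$, and feeding $r_\beta\to\infty$ and the growth hypothesis $\lim_{v\to\infty}vg(v^2)=\infty$ from \eqref{gfuncc} into the right-hand side of the $\rho'$-equation in \eqref{toygov} shows that the bracketed term becomes large positive, forcing $\rho_\beta'>0$ for large $s$ and contradicting S3 (or, more carefully, contradicting $\rho_\beta'\to 0$ which is forced when $\rho_\beta$ has a finite limit along a solution of an autonomous-in-$s$-after-rescaling system). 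Once $\rho_\infty=0$, one shows $r_\infty<\infty$: with $\rho_\beta\to 0$ the factor $\beta r^2 g(r^2)+\rho$ over $r$ must balance against $1$, which (using the monotonicity and convexity in \eqref{gfuncc}) pins $r_\infty$ to the unique positive root of $\beta r^2 g(r^2)=r$, i.e.\ $\beta r\, g(r^2)=1$; in particular $r_\infty$ is finite and positive, giving S4.

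The main obstacle I anticipate is the last step --- ruling out $\rho_\infty>0$ and, more delicately, showing the finite limit $r_\infty$ actually exists rather than $r_\beta\to\infty$ with $\rho_\beta\to 0$ too slowly to be integrable. This requires a genuine asymptotic analysis of \eqref{toygov} near the putative base state, exploiting convexity of $v\mapsto v^2g(v^2)$ to control the sign of the bracket and to identify the base radius as a nondegenerate root. I would expect this to be where the hypotheses \eqref{gfuncc} are really used, and it is plausible the paper isolates it as a separate lemma in Appendix \ref{sec:tech}; in the proof of Lemma \ref{lem:X} itself one would simply cite that asymptotic lemma after disposing of S1--S3 as above.
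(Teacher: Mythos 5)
Your skeleton matches the paper's: global existence of $(\rho_\beta,r_\beta)$, then S1--S3 read off from the definition of $X_{\rm toy}$ via the maximal-interval argument you describe, then S4 by contradiction. The difference --- and the gap you yourself flag at the end --- lies in the S4 step, which you make harder than it is. You split into two cases ($\rho_\infty>0$; and $\rho_\infty=0$ with $r_\infty=\infty$), handle the first cleanly by feeding $r_\beta\to\infty$ and the growth hypothesis $\lim_{v\to\infty} v g(v^2)=\infty$ into the $\rho$-equation, but for the second reach for a ``balancing'' argument, invoking the convexity of $v\mapsto v^2 g(v^2)$ to pin $r_\infty$ to a root. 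That argument does not close the gap you identify: balancing would require $\rho_\beta'\to 0$, which is not automatic when $r_\beta\to\infty$, and even granting it you would only have identified where a finite limit would have to sit, not excluded $r_\infty=\infty$.

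The paper's proof avoids the case split entirely. Once S3 holds, $\rho_\beta$ decreases monotonically to some $c_0\in[0,1)$ and $r_\beta$ is increasing; if $r_\beta$ stayed bounded then $\rho_\beta$ would be forced to $0$ (else $r_\beta'=\rho_\beta$ is bounded away from zero for large $s$), which is exactly S4. So not-S4 forces $r_\beta\to\infty$ \emph{regardless of whether $c_0$ is positive or zero}. With $r_\beta\to\infty$ and $\rho_\beta$ bounded in $[0,1)$, the factor $\sqrt{1-\rho_\beta^2}$ stays bounded away from $0$ and $\beta r_\beta g(r_\beta^2)\to\infty$ by \eqref{gfuncc}, so the bracket in the $\rho$-equation of \eqref{toygov} becomes positive for large $s$, giving $\rho_\beta'>0$ there and contradicting $\beta\in X_{\rm toy}$. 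In short, the argument you already wrote for your first case is the entire S4 proof; the balancing argument and the convexity hypothesis are not needed here (convexity is used later, in Lemma \ref{lem:Rb}), and there is no separate asymptotic lemma to cite.
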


In other words, Lemma \ref{lem:X} states that if S1-S3 of Definition \ref{def:toy} are satisfied then S4 is satisfied.

\begin{lem} $A_{\rm toy}$ and $B_{\rm toy}$ are non-empty, open, disjoint sets. \label{lem:AB}
\end{lem}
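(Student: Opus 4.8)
The plan is to establish the three properties—nonemptiness, openness, and disjointness—separately, relying on the continuity of $F_{\rm toy}$ from Corollary \ref{cor:toytip} and on the structure of the vector field \eqref{toygov}. Disjointness is the cleanest: if $\beta \in A_{\rm toy} \cap B_{\rm toy}$, then along $(\rho_\beta, r_\beta)$ we would have $\rho_\beta \rho_\beta' < 0$ on some maximal interval $(0,s_*)$ where both $\rho_\beta$ and $\rho_\beta'$ stay nonzero except at the endpoint. Since $\rho_\beta > 0$ near the tip (condition S1 gives $\rho_\beta \to 1$) and $\rho_\beta' < 0$ near the tip (from \eqref{localtoytipineq}), the product $\rho_\beta \rho_\beta'$ starts negative, so on $(0,s_*)$ we have $\rho_\beta > 0$ and $\rho_\beta' < 0$ throughout. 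The two defining conditions then say that the first zero of $\rho_\beta$ and the first zero of $\rho_\beta'$ coincide; but at a point where $\rho_\beta = 0$ one reads off from \eqref{toygov} that $\rho' = \tfrac{3}{2}\tfrac{1}{r}(-1 + \tfrac{\beta r^2 g(r^2)}{r})$, which for suitable $r$ need not vanish—more to the point, if both $\rho_\beta(s_0)=0$ and $\rho_\beta'(s_0)=0$ then plugging $\rho=0$ into the $\rho'$ equation forces $\beta r^2 g(r^2) = r$, i.e. $\beta r g(r^2) = 1$, at that $r = r_\beta(s_0)$; I will argue this is incompatible with the ordering of events, or handle it as a measure-zero coincidence excluded by the strict inequalities in the definitions. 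I expect disjointness to follow once the monotonicity of $\rho_\beta$ on $(0,s_*)$ is pinned down.

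For openness, the idea is that membership in $A_{\rm toy}$ is detected by a \emph{transversal} zero crossing. If $\beta_0 \in A_{\rm toy}$, then $\rho_{\beta_0}$ reaches $0$ at some $s_0$ with $\rho_{\beta_0}'(s_0) < 0$ (strictly, since on $(0,s_0)$ we have $\rho_{\beta_0}' < 0$ and this persists to the boundary by continuity of the flow, provided the right-hand side is regular there—which it is, as $r_{\beta_0}(s_0) > 0$ and $\rho = 0 \neq \pm 1$). Transversality plus continuous dependence of solutions on $\beta$ (Corollary \ref{cor:toytip} gives continuity of $\beta \mapsto (\rho_\beta, r_\beta)$, and standard ODE theory gives continuity in $s$ jointly) yields, by an implicit-function-theorem argument, that nearby $\beta$ also produce a transversal zero of $\rho_\beta$ near $s_0$, while the conditions $\rho_\beta \rho_\beta' < 0$ on the shortened interval are open conditions preserved under small perturbation. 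Hence a neighborhood of $\beta_0$ lies in $A_{\rm toy}$. The argument for $B_{\rm toy}$ is the same with $\rho_\beta'$ in place of $\rho_\beta$: one needs the crossing of $\rho_\beta'$ through $0$ to be transversal, i.e. $\rho_{\beta_0}''(s_0) \neq 0$, which I would obtain by differentiating the first equation of \eqref{toygov} and checking the relevant expression is nonzero at $s_0$ (using that $\rho_{\beta_0}(s_0) > 0$ there, so the leading factor $(1-\rho^2)$ and the $1/r$ terms are nondegenerate).

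For nonemptiness, I would exhibit explicit $\beta$ in each set using the two extreme regimes. For $A_{\rm toy}$: when $\beta$ is \emph{small}, the term $\beta r^2 g(r^2)$ is negligible, so near the relevant part of phase space $\rho' \approx \tfrac{3}{2}\tfrac{1-\rho^2}{r}(-1 + \tfrac{\sqrt{1-\rho^2}\,\rho}{r})$, which is strongly negative, driving $\rho_\beta$ down through $0$ quickly while $r_\beta$ stays bounded away from $0$; this is exactly the picture of Figure \ref{fig:Amu}, and the $\beta = 0$ degenerate case of Corollary \ref{cor:toytip} can be used as an anchor to push $\beta$ slightly positive. For $B_{\rm toy}$: when $\beta$ is \emph{large}, the growth condition $\lim_{v\to\infty} v\,g(v^2) = \infty$ from \eqref{gfuncc} makes the bracket $(-1 + \tfrac{\sqrt{1-\rho^2}(\beta r^2 g(r^2)+\rho)}{r})$ positive once $r$ has grown even modestly, so $\rho_\beta'$, which starts negative, is forced back up to $0$ before $\rho_\beta$ itself vanishes—this is Figure \ref{fig:Bmu}. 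The main obstacle I anticipate is making these two limiting-regime arguments rigorous: one must control the toy tip solution $(\rho_\beta, r_\beta)$ uniformly near the tip (where the ODE is singular) well enough to know that for small resp. large $\beta$ the solution does enter the region where the sign analysis applies, and that the first zero of $\rho_\beta$ (resp. $\rho_\beta'$) actually occurs in $M_0$ before the solution can escape through $\rho = \pm 1$ or $r \to 0$. This is where the detailed estimates from the Appendix \ref{sec:exuntoytip} construction of toy tip solutions, together with comparison/differential-inequality arguments on \eqref{toygov}, will do the real work; the rest is soft topology.
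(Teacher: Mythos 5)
Your overall decomposition matches the paper's: disjointness first, then openness of each set via a transversal-crossing argument combined with the continuity of $F_{\rm toy}$, then non-emptiness by examining the extreme regimes $\beta\to 0$ and $\beta\to\infty$. The non-emptiness sketches are in the right spirit — the paper anchors $A_{\rm toy}$ at the degenerate case $\beta=0$ (via a nullcline argument on the set $\{\rho\sqrt{1-\rho^2}/r=1\}$, showing $0\in \tilde A_{\rm toy}$ by ruling out $0\in X_{\rm toy}$ and $0\in B_{\rm toy}$), and anchors $B_{\rm toy}$ at large $\beta$ using $\lim_{v\to\infty}vg(v^2)=\infty$ to force $\rho_{\beta_1}>1/2$ along the whole orbit, exactly as you anticipate.

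There are, however, real gaps. For disjointness you correctly reduce to the coincidence $\rho_\beta(s_0)=\rho_\beta'(s_0)=0$, which forces $\beta r_\beta(s_0)g(r_\beta(s_0)^2)=1$, i.e. $(\rho_\beta(s_0),r_\beta(s_0))$ is an equilibrium of \eqref{toygov}; but your ``incompatible with the ordering of events, or handle it as a measure-zero coincidence'' is not a proof. The clean finish is uniqueness of solutions: a trajectory that reaches an equilibrium in finite $s$ would have to coincide with the constant solution, contradicting S1 ($\rho_\beta\to 1$ at the tip). ``Measure-zero'' is the wrong concept — the coincidence must be excluded for \emph{every} $\beta$, not generic ones. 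Relatedly, for the openness of $A_{\rm toy}$ the strictness $\rho_{\beta_0}'(s_0)<0$ does \emph{not} follow from the right-hand side merely being regular at $(\rho,r)=(0,r_{\beta_0}(s_0))$ (regularity is consistent with $\rho'=0$ there); it follows from disjointness, since $\rho_{\beta_0}'(s_0)=0$ would put $\beta_0\in B_{\rm toy}$ as well. The most serious gap is the openness of $B_{\rm toy}$: you propose to ``check'' that $\rho_{\beta_0}''(s_0)\neq 0$ by differentiating \eqref{toygov}, but this is precisely where the convexity hypothesis $\frac{\partial^2(v^2g(v^2))}{\partial v^2}>0$ in \eqref{gfuncc} earns its keep. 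One first gets $\rho_{\beta_0}''(s_0)\ge 0$ for free from the sign of $\rho'$ on $(0,s_0)$; then, assuming $\rho_{\beta_0}''(s_0)=0$, an explicit computation using \eqref{gfuncc} yields $\rho_{\beta_0}'''(s_0)>0$, which by a Taylor expansion forces $\rho_{\beta_0}'(s)>0$ for $s$ just below $s_0$ — a contradiction. Without this step, transversality for $B_{\rm toy}$ is not established, and the implicit-function-theorem argument does not go through.
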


\textit{Proof of Theorem \ref{theo:maintoy}}. Lemma \ref{lem:AB} implies that $X_{\rm toy} \neq \emptyset$. Then, using Lemma \ref{lem:X} and Corollary \ref{cor:toytip} the theorem follows. \hfill $\square$ 


\subsection{Overview proof of Theorem \ref{theo:toptoy}}
\label{sec:toptoy}

We consider the toy model \eqref{toygov} with $g \in C^{\omega}(\mathbb{R})$ satisfying \eqref{gfuncc}. Define 
\begin{align}
M_1 := \{ (\rho ,r ) \in  (0,1) \times \mathbb{R}_{>0}  \}. \label{M1}
\end{align}
Denote the tip solution $(\rho_\beta,r_\beta)$ restricted to the phase space $M_1$ by $(\rho_\beta,r_\beta)|_{M_1}$. Observe that $\rho_\beta$ can be parametrised in the $r$-variable since $r_\beta$ is monotone. More formally, we define:
\begin{align}
\varrho(\cdot , \beta) := \rho_\beta|_{M_1} \circ \left( r_\beta|_{M_1} \right)^{-1}. \label{varrho} 
\end{align}

\begin{lem}$\varrho$ is a smooth function satisfying
\begin{align*}
\frac{\partial  \varrho}{\partial \beta}  > 0 .
\end{align*}
\label{lem:varrho}
\end{lem}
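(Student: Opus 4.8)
The plan is to derive the ODE that $\varrho(\cdot,\beta)$ satisfies as a function of $r$, differentiate it with respect to $\beta$, and apply a comparison/monotonicity argument starting from the tip. First I would note that, on $M_1$, since $r_\beta$ is strictly increasing (as $\rho_\beta>0$ there), we may use $r$ as the independent variable; dividing the first equation of \eqref{toygov} by the second gives
\begin{align*}
\frac{d\varrho}{dr} = \frac{3}{2}\,\frac{1-\varrho^2}{\varrho\, r}\left(-1 + \frac{\sqrt{1-\varrho^2}\,(\beta r^2 g(r^2) + \varrho)}{r}\right) =: \Phi(r,\varrho,\beta).
\end{align*}
Smoothness of $\varrho$ in $(r,\beta)$ on $M_1$ follows from smooth dependence of solutions of this ODE on parameters and initial data, together with the analyticity of $g$; the only delicate point is the behaviour as $r\to 0$, i.e. at the tip, which is exactly where the toy tip solution construction in Appendix \ref{sec:exuntoytip} (Corollary \ref{cor:toytip}) provides the needed regularity and continuous — indeed smooth — dependence on $\beta$. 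In particular, S1 forces $\varrho(r,\beta)\to 1$ as $r\to 0^+$ for every $\beta$, so the ``initial condition'' at the tip is $\beta$-independent.

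Next I would set $w(r) := \partial\varrho/\partial\beta$ and differentiate the ODE for $\varrho$ with respect to $\beta$, obtaining a linear ODE of the form
\begin{align*}
\frac{dw}{dr} = \frac{\partial \Phi}{\partial \varrho}(r,\varrho,\beta)\, w + \frac{\partial \Phi}{\partial \beta}(r,\varrho,\beta),
\end{align*}
with $w\to 0$ as $r\to 0^+$ because the tip condition $\varrho\to 1$ does not depend on $\beta$. The key observation is the sign of the inhomogeneous term: $\partial\Phi/\partial\beta = \frac{3}{2}\,\frac{(1-\varrho^2)\sqrt{1-\varrho^2}}{\varrho\,r}\cdot r^2 g(r^2) = \frac{3}{2}\,\frac{(1-\varrho^2)^{3/2}}{\varrho}\, r\, g(r^2)$, which is strictly positive on $M_1$ for $r>0$ since $0<\varrho<1$, $r>0$, and $g(r^2)>0$ by \eqref{gfuncc}. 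Because $w$ starts from $0$ at the tip and its derivative receives a strictly positive forcing while it is small, $w$ becomes positive immediately; a standard Gronwall-type argument (or the variation-of-constants formula $w(r) = \int_0^r \exp\!\big(\int_{t}^{r}\partial_\varrho\Phi\big)\,\partial_\beta\Phi(t)\,dt$) then shows $w(r)>0$ for all $r$ in the domain, giving $\partial\varrho/\partial\beta>0$.

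The main obstacle is making the argument rigorous at the tip $r\to 0$, where the ODE for $\varrho$ is singular (the factor $1/r$ blows up and $\varrho\to 1$ so $1-\varrho^2\to 0$), so neither the existence of $\varrho$, its smoothness in $\beta$, nor the limit $w(r)\to 0$ is immediate from elementary ODE theory; these must be imported from the toy tip solution construction of Appendix \ref{sec:exuntoytip}, presumably via the change of variables used there that desingularises the tip and realises the tip solution as an unstable manifold depending smoothly on $\beta$. Once that regularity is in hand, the variation-of-constants representation handles the sign, and one must also check that the integral defining $w(r)$ converges at $t=0$ — which it does, since $\partial_\beta\Phi(t)\sim t\,g(0)$ vanishes linearly and the exponential weight stays bounded near the tip in the desingularised coordinates. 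I would also verify that nothing forces $w$ to vanish or change sign at an interior point: since the ODE for $w$ is linear, if $w(r_1)>0$ then $w$ cannot reach $0$ from above without its derivative being $\le 0$ there, but at a hypothetical first zero $r_2$ we would have $dw/dr = \partial_\beta\Phi(r_2)>0$, a contradiction; hence $w>0$ throughout.
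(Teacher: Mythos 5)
Your proposal is correct and follows essentially the same route as the paper: express $\varrho$ as a solution of the reduced $r$-parametrised ODE, import the smoothness of $\beta\mapsto\varrho(\cdot,\beta)$ and the tip limit $\partial\varrho/\partial\beta\to 0$ as $r\to 0$ from the unstable-manifold construction in Appendix \ref{sec:exuntoytip} (Corollary \ref{cor:varrhosmooth}), observe that $\partial\Phi/\partial\beta=\tfrac{3}{2}\tfrac{(1-\varrho^2)^{3/2}}{\varrho}r\,g(r^2)>0$, and conclude by a sign argument at a putative first zero of $\partial\varrho/\partial\beta$. The paper states this last step as the inequality $\partial^2\varrho/\partial\beta\partial r\big|_{\partial\varrho/\partial\beta=0}>0$, which is exactly your contradiction-at-first-zero argument; your variation-of-constants formula is a slightly more explicit packaging of the same monotonicity fact.
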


For toy steady tip growth solutions we have another ordering resulting from S4. Let $\beta \in X_{\rm toy}$ and define 
\begin{align}
R(\beta) := \lim_{s \rightarrow \infty} r_{\beta} .
\end{align}

\begin{lem} $R$  is a smooth function satisfying 
\begin{align}
\frac{\partial R}{\partial \beta} < 0 . 
\end{align} \label{lem:Rb}
\end{lem}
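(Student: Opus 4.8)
The plan is to show that, for $\beta\in X_{\rm toy}$, the number $R(\beta)$ is characterized by a single algebraic equation — namely that the base-limit point $(0,R(\beta))$ is an equilibrium of \eqref{toygov} — and then to read off both smoothness and the sign of the derivative from an inverse-function computation involving the map $h(r):=r\,g(r^2)$.

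First I would use Lemma \ref{lem:X}: since $\beta\in X_{\rm toy}$, the solution $(\rho_\beta,r_\beta)$ is a toy steady tip growth solution, so conditions S3 and S4 hold. Hence $\rho_\beta$ is strictly decreasing, stays positive, and $\rho_\beta(s)\to 0$, $r_\beta(s)\to R(\beta)$ with $R(\beta)>0$ as $s\to\infty$. Writing the right-hand side of the $\rho$-equation in \eqref{toygov} as $P(\rho,r)$, the function $P$ is continuous at $(0,R(\beta))$ because $R(\beta)>0$, so $\rho_\beta'(s)=P(\rho_\beta(s),r_\beta(s))\to P(0,R(\beta))$. Since $\rho_\beta'<0$ on all of $\mathbb{R}_{>0}$ while $\rho_\beta$ is bounded below, this limit cannot be negative, hence it is $0$; evaluating the formula for $P$ at $\rho=0$, the identity $P(0,R(\beta))=0$ is precisely $\beta\,R(\beta)\,g(R(\beta)^2)=1$.

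Next I would analyse $h(r):=r\,g(r^2)$. From \eqref{gfuncc} one gets $h'(r)=g(r^2)+2r^2g'(r^2)>0$ for $r>0$, together with $h(0^+)=0$ and $h(r)\to\infty$ as $r\to\infty$; thus $h$ is a (real-analytic) diffeomorphism of $\mathbb{R}_{>0}$ onto itself. The equation from the previous step reads $h(R(\beta))=1/\beta$, so $R(\beta)=h^{-1}(1/\beta)$ for every $\beta\in X_{\rm toy}$. The right-hand side is defined and real-analytic on all of $\mathbb{R}_{>0}$, so $R$ coincides on $X_{\rm toy}$ with a smooth function, and differentiating gives $R'(\beta)=-\bigl(\beta^2\,h'(h^{-1}(1/\beta))\bigr)^{-1}<0$. (Equivalently: apply the implicit function theorem to $F(R,\beta)=\beta R g(R^2)-1$, using $\partial_R F=\beta h'(R)>0$ and $\partial_\beta F=h(R)>0$, to obtain $\partial_\beta R=-h(R)/(\beta h'(R))<0$.)

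The one step that needs care is the identification $P(0,R(\beta))=0$: one must first be sure that $\lim_{s\to\infty}\rho_\beta'(s)$ exists — this is exactly where continuity of $P$ and the convergence of both $\rho_\beta$ and $r_\beta$ (with $R(\beta)>0$) are used — and only then invoke monotonicity and boundedness of $\rho_\beta$ to force that limit to vanish. After that the argument is a routine inverse-function-theorem computation, and the only properties of $g$ actually needed are $g>0$, $\partial_v g\geq 0$, and $v\,g(v^2)\to\infty$; convexity of $v^2g(v^2)$ plays no role here.
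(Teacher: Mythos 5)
Your proposal is correct and follows essentially the same route as the paper: both characterize $R(\beta)$ as the unique root of $\beta r g(r^2)=1$ (i.e.\ as the $r$-component of the equilibrium of \eqref{toygov} with $\rho=0$), and then obtain the sign of $R'(\beta)$ by implicit/inverse differentiation using $g>0$ and $g'\geq 0$. The only difference is cosmetic: you spell out explicitly why the base limit $(0,R(\beta))$ must be an equilibrium (via convergence of $\rho_\beta'$ together with monotonicity and boundedness of $\rho_\beta$), a step the paper dispatches with the terse remark ``It follows from S4 that $R(\beta)=\tilde R(\beta)$,'' and your closing observation that convexity of $v^2g(v^2)$ is not needed here is accurate — that hypothesis is only used in the proof of Lemma \ref{lem:AB}.
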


Observe that the order in Lemma \ref{lem:varrho} is reverse to the order in Lemma \ref{lem:Rb}.

\begin{proof}[Proof of Theorem \ref{theo:toptoy}] From  Lemma \ref{lem:AB} it follows that $X_{\rm toy}$ is closed and by Theorem \ref{theo:maintoy} it follows that $X_{\rm toy}$ corresponds to toy steady tip growth solutions. Suppose that ${\rm int}(X_{\rm toy}) \neq \emptyset$. Then there exists an open interval $X_0 \subset X_{\rm toy}$. Take $\beta_1,\beta_2 \in X_0$ with $\beta_1 < \beta_2$. Then, from Lemma \ref{lem:varrho} it follows that $R({\beta_1}) < R({\beta_2})$, see Figure \ref{fig:order}. This is in contradiction with Lemma \ref{lem:Rb}.
\end{proof}

\begin{figure}[h]
\begin{center}
\includegraphics[width=8cm]{./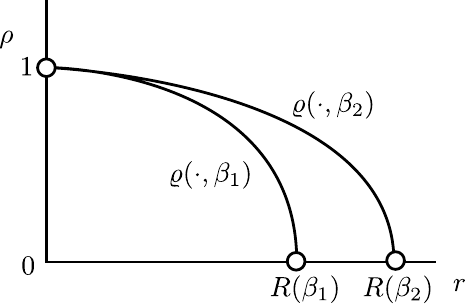}
\caption{Ordering of tip solutions. The white dots correspond to limit points. If there exists an open interval $X_0 \subset X_{\rm toy}$ then for $\beta_1,\beta_2 \in X_0$ with $\beta_1 < \beta_2$ we have that $R({\beta_1}) < R({\beta_2})$.  \label{fig:order}}
\end{center} \end{figure} 

\newpage

\section{The BATS model for fungal tip growth}
\label{secgov}

In this section we review the biological model on which the toy model is based; details can be found in \cite{JONBIO}. This model is called the Ballistic Ageing Thin viscous Sheet (BATS) model. The BATS model gives a description of continuous tip growth in fungal filaments called hyphae. For a short biomechanical overview of the BATS model we refer to \cite{JONCOMFOS,JONIEEE}. For more details concerning the biology of hypha growth we refer to  \cite{CO06,GO17,HA02,KEIJ09,MO08,STE07}.  In Section \ref{sec:toyandbio} the toy model is connected to the BATS model.

Tip growth is a growth stage of a biological cell. During tip growth the cell exhibits extreme lengthwise growth while its shape remains qualitatively the same and the cell's tip velocity remains approximately constant, see Figure \ref{fig:tipgrowth}. Tip growth occurs in a variety of different biological cells, such as fungal filaments, plant root hairs, and flower pollen tubes \cite{GEIT01,GI55}.

Modelling tip growth consists of two aspects: transport of cell wall building material to the cell wall and growth of the cell wall under absorption of cell wall building material. The BATS model relies on an assumption of Bartnicki-Garcia et al. \cite{BG89,BG01} to model cell wall building material transport and an assumption of  Camp\`{a}s and Mahadevan \cite{CM09} to model the growth of the cell wall under absorption of the cell wall building material. Furthermore, a novel equation which models the hardening of the cell wall as it ages is included to derive the BATS model.


\subsection{Modelling tip growth}
\label{modtipgg}

The shape of the cell during tip growth was mathematically described in Section \ref{sec:matdes}. During tip growth the cell grows with constant speed in the direction normal to the tip. In addition, the cell preserves its overall shape. Then, in the $(z,r)$-plane the moving profile at time $t$ is characterised by $(z(s)+ct,r(s))$ where $c$ is the velocity of the tip. We will take $c<0$. To remove the time variable we consider a moving reference frame in which the tip of the cell is fixed at $(z_0,0)$ in the $(z,r)$-plane with $z_0<0$.

At a fixed distance from the cell's tip there is an organelle which transports cell wall building packages, called vesicles, to the cell wall \cite{GI55,GI69}.  Following the work of Bartnicki-Garcia et al.\ \cite{BG89,BG01} it is assumed that vesicles are sent in straight trajectories from an isotropic point source. This point source is called the ballistic Vesicle Supply Center (VSC), see Figure \ref{fig:modeli}. We fix the ballistic VSC at $(z,r)=(0,0)$.

\begin{figure}[h]
\begin{center}
\includegraphics[width=10cm]{./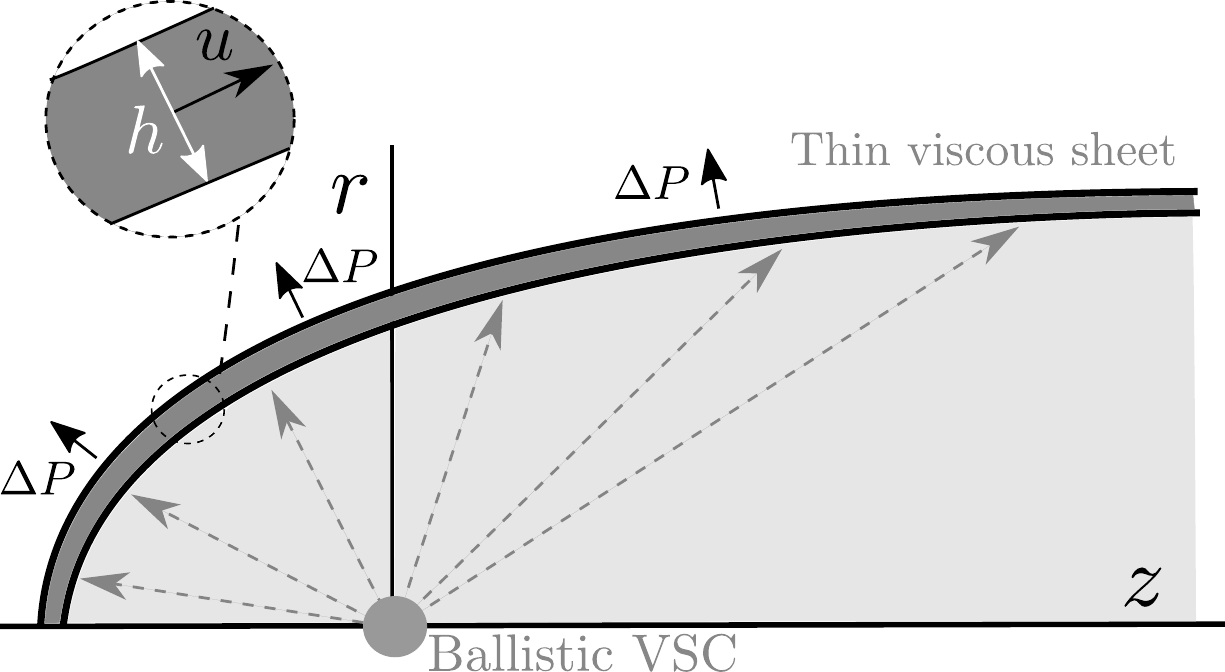}
\caption{Overview of the  BATS model. The cell wall as a thin viscous sheet with ballistic Vesicle Supply Center (VSC).  Vesicles are transported in straight lines to the cell wall from an istropic point source called the ballistic vesicle supply center.  The ballistic vesicle supply center is fixed at $(z,r)=(0,0)$.   The cell wall is modelled as a thin viscous sheet which sustained by a pressure difference $\Delta P$ which exerts a force in the outward normal. The thickness of the cell wall is denoted by $h$ and the tangential velocity of the viscous fluid in the cell wall is denoted by $u$.\label{fig:modeli}}
\end{center} \end{figure}

Following the work of Camp\`{a}s and Mahadevan \cite{CM09} it is assumed that the cell wall is a thin viscous sheet.  This sheet is sustained by a pressure difference $\Delta P$ which is the result from the high pressurised environment in the cell and the comparatively low atmospheric pressure outside the cell. The pressure difference $\Delta P$ generates a force in the direction of the outward normal of the sheet. The thin viscous sheet modelling assumption introduces two new $s$-dependent variables: the thickness of the cell wall denoted by $h$ and the tangential velocity of the cell wall particles denoted by $u$. See Figure \ref{fig:modeli} for an overview of the thin viscous sheet cell wall. Observe that the velocity of the tip can be retrieved by computing $\lim_{s \rightarrow \infty} u(s)$.

To withstand the pressure difference the cell wall is generally strong and rigid. But to ensure rapid growth the tip of the cell wall deforms easier than the cell wall away from the tip, \cite{WES83}. To take this effect into account the BATS model assumes that the cell wall hardens as it ages. Age is a cell wall particle specific value. At arc length $s$ the cell wall has thickness $h(s)$. Hence, the $s$-dependent average variable $\Psi$ is introduced. To compute $\Psi(s)$, an integral equation needs to be solved. The integral equation will be formulated as a differential equation in the next section. To model the hardening of the cell wall as it ages it is assumed that the viscosity depends on $\Psi$: the viscosity at $s$ is given by $\mu(\Psi(s))$, where $\mu \in C^{\infty}(\mathbb{R}_{>0})$ is the viscosity function. Hardening of the cell wall means that the viscosity increases with age. Thus, we require that: 
\begin{align}
\frac{d\mu}{d\Psi}>0.  \label{conbio2}
\end{align}

From an application perspective $\mu$ is a fungus cell specific function since cells can have different material properties based on the species and the cell's physical circumstances.  Hence, $\mu$ is chosen as general as possible in a theoretical setting.


\subsection{Governing equations: 5-dimensional first order ODE}

The BATS model can be expressed as two force balance equations, a mass conservation equation, an age equation and the shape equation (\ref{dzdsarc}). The mass conservation equation is used to eliminate the $u$-variable \cite{JONBIO}. All physical parameters can either be scaled away or absorbed in $\mu$. The non-dimensionalised governing equations can be expressed as the following 5-dimensional first-order ODE: 

\begin{gather}
\begin{aligned}
\rho' &= \frac{3}{2} \frac{(1-\rho)^2}{r} \left(-1+  \frac{ \mu(\Psi) \Gamma(r,z) \rho  \sqrt{1-\rho^2}}{   r^3 }  \right) , \\
r' &= \rho , \\
 h'& = \Big( \frac{r \gamma(\rho,r,z)}{\Gamma(r,z)}- \frac{\rho}{2r} - \frac{ r^2   }{2\mu(\Psi) \Gamma(r,z)  \sqrt{1-\rho^2}}\Big) h , \\
 \Psi' &= \frac{ rh}{\Gamma(r,z)} - \frac{  r \gamma(\rho,r, z) }{\Gamma(r,z) } \Psi,\\
 z' &= \sqrt{1- \rho^2},
\end{aligned}\label{finalfull}
 \end{gather} 
where
\begin{gather}
\begin{aligned}
\gamma(\rho, r,z) &=   
  \frac{r  \sqrt{1-\rho^2}-z  \rho}{(z^2+r^2)^{3/2}},\\
   \Gamma(r,z) &=   1 + \frac{z}{\sqrt{r^2+z^2}} . 
\end{aligned} \label{fluxes}
\end{gather}
We will consider equation \eqref{finalfull} on the phase space given by 
\begin{equation}
M:= \{ (\rho , r,  h , \Psi, z) \in  (-1,1) \times \mathbb{R}_{>0} \times \mathbb{R}_{>0} \times \mathbb{R}_{>0}  \times \mathbb{R}     \}. \label{M}
\end{equation}
The dynamical system corresponding to (\ref{finalfull}) is referred to as the \textit{BATS model}. We define:
\begin{align}
\mathcal{F}:= \{ \mu  \in C^\infty(\mathbb{R}_{>0}) : \; \mu' >0 ,   \lim_{\Psi \rightarrow \infty}\mu(\Psi)= \infty   \} . \label{viscocondi}
\end{align}
For the BATS model (\ref{finalfull}) we let $\mu \in \mathcal{F}$. In \cite{JONBIO} it is shown that $\lim_{\Psi \rightarrow \infty}\mu(\Psi) = \infty$ is a necessary condition for the BATS model \eqref{finalfull} to have solutions which resemble fungal tip growth. 

A solution of \eqref{finalfull}  will be denoted by the vector $x=(\rho ,r , h, \Psi,z)$. When necessary we indicate the dependence of $x$ on $\mu$ by writing $x(\, \cdot \, ; \mu)$.


\subsection{Steady tip growth solutions}
\label{defstgG}

In Section \ref{sec:matdes} a description was given of the cell shape during tip growth in terms of $\rho,r$. Besides the $\rho,r$ variables the BATS model has the variables $h,\Psi,z$. In this section we extend the conditions S1-S4, Section \ref{sec:matdes}, for the toy model to the BATS model.

\begin{itemize}

\item[T1] \textbf{Tip limits:} 
\begin{align*}
\lim_{s \rightarrow 0} \rho(s) &=1,  &   \lim_{s \rightarrow 0} r(s) &=0 ,\\
  \lim_{s \rightarrow 0} h(s) &=h_0 >0,  & \lim_{s \rightarrow 0} \Psi(s) &= h_0 z_0^2, \\
    \lim_{s \rightarrow 0}  z(s)&= z_0 <0 . &    
\end{align*}
\textit{Heuristic explanation:} The $\rho,r$ limits are the same as S1. The limits for  $h, z$ follow directly from the cell shape in Figure \ref{fig:cellshape} and Figure \ref{fig:modeli}. To arrive at the BATS model \eqref{finalfull} the $\Psi$-integral equation was replaced by a differential equation. Computing the limit $s \rightarrow 0$ for this integral equation yields the $\Psi$-limit \cite{JONBIO}. Differently from S1, the limit $\lim_{s \rightarrow 0 }\sqrt{1-\rho(s)^2}/r(s)$ does not appear. This limit is naturally satisfied by the BATS model \eqref{finalfull}, see Appendix \ref{sec:BATSC} for the proof.

\item[T2] \textbf{Analyticity in $r^2$:} There exists $s_0>0$ and $G \in C^{\omega}\left( (-a ,a) , \mathbb{R}^4 \right)$ with $a=r(s_0)^2$ such that
\begin{align*}
(\rho,h,\Psi,z)(s)= G(r(s)^2) \qquad \forall s \in (0,s_0).
\end{align*}
\textit{Heuristic explanation:} Besides S2 we also expect that $h,\Psi,z$ are even functions close to the tip when parametrized in $r$ due to axial symmetry and smoothness at the tip.

\item[T3] \textbf{Global constraints:} For all $s \in \mathbb{R}_{>0}$ the following constraints are satisfied:
\begin{align*}
\rho'(s) <0, \quad \rho(s) >0.
\end{align*}
\textit{Heuristic explanation:}  This condition is identical to S3. 

\item[T4] \textbf{Base limits:}  
\begin{align*}
\lim_{s \rightarrow \infty} \rho(s) &=0,  &   \lim_{s \rightarrow \infty} r(s) &=r_\infty >0,\\
  \lim_{s \rightarrow \infty} h(s) &=h_\infty >0, & \lim_{s \rightarrow \infty} \Psi(s) &= \infty, \\
    \lim_{s \rightarrow \infty}  z(s)&= \infty . &    
\end{align*}
\textit{Heuristic explanation:} Besides S4 we expect that the cell wall thickness converges to a positive constant. The $\Psi$-limit follows from the integral equation that was used to arrive at the BATS model \eqref{finalfull}. Computing the limit $s \rightarrow 0$ for this integral equation yields the $\Psi$-limit \cite{JONBIO}. The $z$-limit follows from the assumption in Section \ref{modtipgg} that the cell's length is infinite.  
\end{itemize}

Solutions of the BATS model which correspond to fungal tip growth are called steady tip growth solutions:

\begin{defi}[Steady tip growth solution] $x=(\rho ,r , h, \Psi,z)$ is a \emph{steady tip growth solution} if it is a solution of the BATS model \eqref{finalfull} that satisfies conditions T1--T4. \label{defi:steadytipgrowth}
\end{defi}

Solutions which are not steady tip growth solutions are not meaningful from a biological perspective. Therefore, proving the existence of steady tip growth solutions is a necessary step in validating the BATS model.

\begin{rem}
It can be shown that condition T4 without $\lim_{s \rightarrow \infty} \Psi(s)=\infty$ implies that $\lim_{s \rightarrow \infty} \Psi(s)=\infty$ \cite{JONBIO}. Hence, in \cite{JONBIO} the condition T4 is formulated without the requirement $\lim_{s \rightarrow \infty} \Psi(s)=\infty$. For convenience we have included the limit in T4. In \cite{JONIEEE} T4 was also formulated with this limit included. 
\end{rem}

\section{Connecting toy model to BATS model\label{sec:toyandbio}}
We will give an analytical derivation of the toy model from the BATS model of Section \ref{secgov}. Then, using the toy model we formulate three conjectures which imply the existence of steady tip growth solutions as given by Definition \ref{defi:steadytipgrowth}.


\subsection{Analytical derivation toy model}

In this section we will derive the toy model \eqref{toygov} from the BATS model \eqref{finalfull}.

Recall that the toy model \eqref{toygov} is a first-order equation with dependent variables $\rho$ and $r$. The $r$-equation in \eqref{finalfull} is given by $r'=\rho$.  Observe that the $\rho$-equation in \eqref{finalfull} has a dependency on $\rho$, $r$, $\Psi$, and $z$. The toy model will be derived by substituting terms in the $\rho$-equation such that the resulting equations only depend on $\rho$ and $r$. These substitutions will be done in such a way that certain asymptotic and global properties are preserved. These substitutions introduce three parameters. Since the bifurcation in Figure \ref{fig:bifu_sol} only requires a single parameter, we will reduce the system to a single parameter.

  
\subsubsection{Substitutions for $z,\Psi$-components}
\label{sec:sub}

To derive the toy model we will substitute $\Gamma(r,z)$ and $\mu(\Psi)$ in the $\rho$-equation of \eqref{finalfull} by $(\rho,r)$-dependent terms which have similar limiting dynamics  for $s \rightarrow s_0$, $s \rightarrow \infty$ and similar global dynamics for $s \in (s_0, \infty)$.   

 Let $x_*=(\rho_*,h_*,\Psi_*,z_*,r_*)$ be a steady tip growth solution as given by Definition \ref{defi:steadytipgrowth}.  We introduce the following substitutions:

\paragraph{Substitution for $\Gamma(r,z)$:} From conditions T1, T3, and T4 we obtain the following:
\begin{align}
\lim_{s \rightarrow s_0}\frac{\Gamma(r_*(s),z_*(s))}{(r_*(s))^2}&= \frac{1}{2z_0^2},& \lim_{s \rightarrow \infty} \frac{\Gamma(r_*(s),z_*(s))}{(r_*(s))^2}&=  \frac{2}{r_\infty^2}, \label{Gammarc} \\ 
\frac{\Gamma(r_*(s),z_*(s))}{(r_*(s))^2}&>0, \qquad \forall s \in \mathbb{R}_{>0}  . \label{Gammarc2}
\end{align} 
Observe that $z_0$ appears in \eqref{Gammarc}. Hence,  a substitution for $\Gamma(r,z)$ should be parameter dependent.  The simplest substitution for $\Gamma(r_*,z_*)$ such that the limits \eqref{Gammarc} exist and the inequality \eqref{Gammarc2} is satisfied is $\alpha_1 r_*^2$ with parameter $\alpha_1  \in \mathbb{R}_{>0}$.

\paragraph{Substitution for $\mu(\Psi)$:} The governing ODE \eqref{finalfull} depends on the viscosity function $\mu$. Hence, the substitution for $\mu(\Psi_*)$ also has a function dependency which will be the function $g$.  Observe that condition T1 and T3 imply that 
\begin{gather} 
\begin{aligned}
\lim_{s \rightarrow s_0} \mu(\Psi_*(s)) \rho_*(s)  &= c_1>0, \\
\mu(\Psi_*(s)) \rho_*(s)   &>0, \qquad \forall s \in \mathbb{R}_{>0} . 
\end{aligned} \label{mumu}
\end{gather}
The base limit condition T4 does not imply that the limit of $\mu(\Psi_*(s)) \rho_*(s)$ for $s \rightarrow \infty$ exists. We would expect that
\begin{align}
\lim_{s \rightarrow \infty } \rho_*'(s) =0. \label{rholimsinff}
\end{align}
The limit \eqref{rholimsinff} together with condition T4 gives
\begin{gather}
\begin{aligned}
\lim_{s \rightarrow \infty}\mu(\Psi_*(s)) \rho_*(s)   =   c_2>0.
\end{aligned} \label{muww}
\end{gather}
Observe that $c_1$ in \eqref{mumu} depends on $h_0$ and $z_0$. Hence, the substitution for $\mu(\Psi_*)$ should be parameter dependent. From condition T2 and the analyticity of $\mu$ it follows that there exists $s_1>s_0$ and $G_1 \in C^{\omega}((-a,a), \mathbb{R})$ with $a = r_{*}(s_1)^2$ such that
\begin{align}
 \mu(\Psi_*(s)) = G_1(r_*(s)^2)   \qquad \forall s \in (s_0,s_1) .  \label{mupmup}
\end{align} 
In other words, we require that the substitution for $\mu(\Psi_*)$ can be written as an analytic function in $r_*^2$. If we substitute $\mu(\Psi_*)$ by 
\begin{align}
  \beta_1 r^2_* \rho^{-1}_* g(r^2_*)  + \beta_2, \;\; \beta_1, \beta_2 \in \mathbb{R}_{>0}, \label{babababa}
\end{align}
 in \eqref{mumu} and \eqref{muww} then the limit exists and the inequality is satisfied. Furthermore, if   $\mu(\Psi_*)$  in \eqref{mupmup}  is substituted  by \eqref{babababa} then there exists a $s_1>s_0$ and a $G_1 \in C^{\omega}((-a,a), \mathbb{R})$ with $a = r_{*}(s_1)^2$  since $\rho_*$ satisfies T2.


\subsubsection{One parameter ODE}

Using the substitutions from Section \ref{sec:sub} we observe that the $(\rho,r)$-equations of \eqref{finalfull} decouple. Observe that the substitutions from Section \ref{sec:sub} introduce three parameters: $\alpha_1, \beta_1, \beta_2$.  We are dealing with a codimension-1 bifurcation. Hence, we reduce the problem to one parameter. Let $\alpha_1 = \beta_2^{-1}$, define $\beta :=\alpha_1^2$, and consider the scaled variables $\tilde{r} =  r /(\alpha_1 \beta_2)$ and $\tilde{s} =  s /(\alpha_1 \beta_2)$. Dropping the tildes gives the toy model \eqref{toygov}.


\subsection{Conjectures}

The existence of toy steady tip growth solutions as given by Definition \ref{def:toy}, Theorem \ref{theo:maintoy}, relies on Corollary \ref{cor:toytip}, Lemma \ref{lem:X} and Lemma \ref{lem:AB}. We re-formulate these in the setting of the BATS model \eqref{finalfull}. This will yield three conjectures. We will use the numerical results obtained in \cite{JONIEEE} to supply evidence which supports these conjectures.


\subsubsection{Tip solutions}

We define solutions which satisfy the location conditions at the tip as given by Definition \ref{defi:steadytipgrowth}. These are an analogue of toy tip solution as given by Definition \ref{def:toytip}. 

\begin{defi}[Tip solutions] $x= (\rho,r,h,\Psi,z)$ is a \emph{tip solution} if it is a solution of the BATS model \eqref{finalfull} which satisfies T1 and T2, and if there exists $s_0 >0$ such that
\begin{align}
\rho_*(s) >0 , \quad \rho'_*(s)<0 \quad \forall s \in (0, s_0).
\end{align} 
\label{defi:tip}
\end{defi}
If follows from Definition \ref{defi:steadytipgrowth} and Definition \ref{defi:tip} that
\begin{align}
\text{steady tip growth solutions } \subset \text{ tip solutions}. \label{subset}
\end{align}
Steady tip growth solutions given in Definition \ref{defi:steadytipgrowth} should occur as the result of a bifurcation of tip solutions as given in Definition \ref{defi:tip}. Hence, the property given in \eqref{subset} is necessary. Observe that \eqref{subset2} is a toy version  \eqref{subset}.

If $x(\, \cdot \, ; \mu) = (\rho,r,h,\Psi,z)(\, \cdot \,)$ is a tip solution then
\begin{align}
\lim_{s \rightarrow s_0} h(s) = h_0, \qquad \lim_{s \rightarrow s_0} z(s) = z_0. \label{h0z0}
\end{align}
Denote the tip solution satisfying \eqref{h0z0} with $\alpha= (h_0,z_0)$ by  $x_\alpha(\, \cdot \, ; \mu)$.  
The asymptotic expansions for tip solutions computed in \cite{JONIEEE} suggest the following:

\begin{con}There exists an open set $Y_\mu  \subseteq \mathbb{R}_{>0} \times \mathbb{R}_{<0}$ such that for all $\alpha \in Y_\mu$ there exists a unique $x_\alpha(\, \cdot \, ; \mu)$. In addition, $F_\mu: \alpha \mapsto x_\alpha (\, \cdot \, ; \mu) $ is continuous with respect to $\alpha$.
\label{con:xa}
\end{con}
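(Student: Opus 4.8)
To prove Conjecture~\ref{con:xa} one would reproduce, in five dimensions, the construction behind Corollary~\ref{cor:toytip} that is carried out for the toy model in Appendix~\ref{sec:exuntoytip}. The key observation there is that, although the governing ODE has no equilibrium corresponding to the tip limits, a suitable change of variables --- a blow-up of the singular directions together with a rescaling of the independent variable $s$ --- turns the tip into a fixed point whose (centre-)unstable manifold is one-dimensional; the orbit on that manifold is the tip solution, and uniqueness of the tip solution is precisely the statement that this manifold is one-dimensional. For~\eqref{finalfull} the same three steps would be carried out: desingularise at the tip, analyse the linearisation of the resulting field, and read off the properties demanded by Definition~\ref{defi:tip}.

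For the desingularisation, observe that as $s\to0$ one has $\rho\to1$, $r\to0$, $z\to z_0<0$, $h\to h_0$, $\Psi\to h_0z_0^2$, so of the functions in~\eqref{fluxes} only $\gamma$ is regular at the tip while $\Gamma$ vanishes there like $r^2$ (indeed $\Gamma(r,z)/r^2\to 1/(2z_0^2)$); the singularities of~\eqref{finalfull} at the tip are carried by the factors $1/r$, $1/r^3$, $1/\sqrt{1-\rho^2}$ and $1/\Gamma$. The heuristic of S1 identifies the correct balance as $\sqrt{1-\rho^2}/r\to\eta_0$, which suggests the blow-up coordinate $w=\sqrt{1-\rho^2}/r$ (so that $\rho=\sqrt{1-w^2r^2}$), a companion coordinate resolving $\Gamma$ (say $\Gamma/r^2$), and the rescaled time $d\tau=ds/r$. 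In these variables the system should extend analytically across $\{r=0\}$, and T1 --- together with the relation $\lim_{s\to0}\sqrt{1-\rho^2}/r$, which by Appendix~\ref{sec:BATSC} is automatically fixed by the remaining tip data --- should correspond to a single fixed point $p_\alpha$ depending analytically on $\alpha=(h_0,z_0)$.

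One would then compute the Jacobian of the blown-up field at $p_\alpha$. The expectation is a single expanding direction, along which a trajectory detaches from the tip as $\tau\to-\infty$, the remaining directions carrying the tip data and the constraints; if the spectrum is hyperbolic the unstable manifold theorem yields, for each $\alpha$, a unique orbit $x_\alpha(\cdot\,;\mu)$ leaving $p_\alpha$. Since a blow-up typically produces only partially hyperbolic fixed points, it is more likely that zero eigenvalues occur, in which case one passes to a centre-unstable manifold and uses a Briot--Bouquet / analytic-node type normal form both to isolate the one orbit leaving the tip with $r\to0^+$ and to obtain analyticity in $r^2$ of $(\rho,h,\Psi,z)$ --- the evenness required by T2, which is forced by the natural blow-up variable being $r^2$ and by the symmetry $r\mapsto-r$ of~\eqref{finalfull} (genuine analyticity, rather than mere $C^\infty$ smoothness, may additionally require $\mu$ analytic). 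Pulling $x_\alpha$ back to the original variables then gives T1 and T2, while the leading-order asymptotics of $\rho$ and $\rho'$ along the orbit, available from the expansions in~\cite{JONIEEE}, give $\rho>0$ and $\rho'<0$ on an interval $(0,s_0)$, so that $x_\alpha$ is a tip solution. Continuity of $F_\mu$ in $\alpha$ follows from the analytic dependence of $p_\alpha$ on $\alpha$ and the smooth dependence of the invariant manifold on its base point, and $Y_\mu$ is taken to be the set of $\alpha\in\mathbb{R}_{>0}\times\mathbb{R}_{<0}$ for which this construction succeeds, which is open because the sign and non-degeneracy conditions invoked along the way are open.

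The main obstacle is the desingularisation together with the linear analysis at $p_\alpha$: one must resolve the $r\to0$, $\rho\to1$ and $\Gamma\to0$ singularities simultaneously in a genuinely five-dimensional system with the coupled $h$- and $\Psi$-equations, verify that the blown-up field is analytic up to $\{r=0\}$, and control a linearisation that is generically only partially hyperbolic well enough to prove that exactly one orbit leaves each tip equilibrium --- that is, to obtain uniqueness --- while preserving the analytic structure needed for T2. For the toy model this is tractable because the blow-up is essentially two-dimensional and explicit; in five dimensions it is exactly the step at which the argument remains open, which is why Conjecture~\ref{con:xa} is stated rather than proved.
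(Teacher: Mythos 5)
Conjecture~\ref{con:xa} is stated in the paper as a conjecture and is not proved there; the only treatment it receives is a paragraph in the Discussion (Section~\ref{sec:disc}) observing that the Appendix~\ref{sec:exuntoytip} transformation, which reduces toy tip solutions to an unstable manifold, is expected to extend to the BATS model, together with the remark that $Y_\mu$ is $\mu$-dependent. Your sketch elaborates precisely that strategy --- desingularize at the tip, pass to a (centre-)unstable manifold of a family of equilibria $p_\alpha$, use analyticity/normal-form arguments to get T2, and read off continuity from smooth dependence of the invariant manifold --- and you correctly identify the five-dimensional blow-up and its linear analysis as the open obstacle, so what you have produced is essentially the paper's own proposed route and its acknowledged gap, not a proof.
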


Observe that Corollary \ref{cor:toytip} is the toy version of Conjecture \ref{con:xa}. Note that the BATS model \eqref{finalfull} has no parameters. We view $\alpha$ as a parameter dependency in $x_{\alpha}(\, \cdot \, ; \mu )$.  Conjecture \ref{con:xa} implies that there exists a two parameter family of tip solutions.  We note that the numerical results in \cite{JONIEEE} suggest that there exists a viscosity function $\tilde{\mu}$ such that in Conjecture \ref{con:xa} the set $Y_{\tilde{\mu}} = \mathbb{R}_{>0} \times \mathbb{R}_{<0}$.  There also exists a viscosity function $\hat{\mu}$ such that $Y_{\hat{\mu}} \neq \mathbb{R}_{>0} \times \mathbb{R}_{<0}$.  


\subsubsection{Classification of tip solutions}

The numerical results suggest that there exists an open $Y\subset Y_\mu$ such that for all $\alpha \in Y$ the solutions $x_\alpha(\, \cdot \, ; \mu)$ which are \emph{not} steady tip growth solutions (Definition \ref{defi:steadytipgrowth}) are classified by:
\begin{gather}
\begin{aligned}
A_\mu(Y)&:= \{  \alpha  \in Y :   \exists  s_0 \in \mathbb{R}_{>0}, \;  \rho_\alpha(s;\mu) \rho_\alpha'(s;\mu) <0  , \;  \forall s \in (0,s_0),   \rho_\alpha(s_0;\mu) =0       \} ,\\
B_\mu(Y) &:= \{  \alpha \in Y  :   \exists  s_0 \in \mathbb{R}_{>0}, \;   \rho_\alpha(s;\mu) \rho_\alpha'(s;\mu) <0, \;  \forall s \in (0,s_0),   \rho_\alpha'(s_0;\mu) =0   \}. 
\end{aligned} \label{AenB}
\end{gather}

Observe that $A_\mu(Y)$ and $B_{\mu}(Y)$ are the BATS model analogue of $A_{\rm toy}$ and $B_{\rm toy}$ from \eqref{ABtoy}, respectively.  Hence, tip solutions $x_\alpha(\, \cdot \, ; \mu)$  for all $\alpha  \in A_\mu(Y)$ resemble Figure \ref{fig:Amu} and tip solutions $x_\alpha(\, \cdot \, ; \mu)$ for all $\alpha  \in B_\mu(Y)$ resemble Figure \ref{fig:Bmu}. 

We define 
\begin{align}
X_\mu(Y) := Y  \setminus (A_\mu(Y) \cup B_\mu(Y)). \label{XmuY}
\end{align}
Observe that $X_\mu(Y)$ is the  ODE \eqref{finalfull} analogue of $X_{\rm toy}$ from \eqref{Xtoy}. Hence, tip solutions $x_\alpha(\, \cdot \, ; \mu)$  for all $\alpha  \in X_\mu(Y)$ satisfy resemble Figure \ref{fig:Xmu}.

Observe that the definition of the sets $A_\mu(Y)$ and $B_\mu(Y)$ in \eqref{AenB} gives no information on the non-emptiness of $X_\mu(Y)$. If steady tip growth solutions exist, then they must correspond to parameters in $X_\mu(Y)$. Observe that if a $x_\alpha(\, \cdot \, ; \mu)$ satisfies T3, then $\alpha \in X_\mu(Y)$, see Figure \ref{fig:Xmu}. The numerical results in \cite{JONIEEE} suggest the following:

\begin{con}
Let $X_\mu(Y) \neq \emptyset$ with $Y \subset Y_\mu$. If $\alpha \in X_\mu(Y)$ then $x_\alpha(\, \cdot \, ; \mu)$ is a steady tip growth solution as specified in Definition \ref{defi:steadytipgrowth}.
\label{con:X}
\end{con}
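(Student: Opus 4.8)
The plan is to follow, step for step, the argument behind the planar analogue Lemma~\ref{lem:X}, upgrading each ingredient to the five-dimensional system \eqref{finalfull}. Since $x_\alpha(\,\cdot\,;\mu)$ is a tip solution it already satisfies T1 and T2, so the content of the conjecture splits into (i) showing that $\alpha\in X_\mu(Y)$ forces T3 on the whole forward interval of existence and, at the same time, forces that interval to be $(0,\infty)$; and (ii) showing that T3 together with forward-global existence implies T4. Part (i) is bookkeeping. Let $s^\star$ be the supremum of the $s$ for which $\rho_\alpha>0$ and $\rho_\alpha'<0$ on $(0,s)$; by Definition~\ref{defi:tip}, $s^\star>0$. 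If $s^\star$ were strictly below the escape time of $x_\alpha$ from $M$, then by continuity at $s^\star$ at least one of $\rho_\alpha(s^\star)=0$ or $\rho_\alpha'(s^\star)=0$ holds; since $\rho_\alpha\rho_\alpha'<0$ on $(0,s^\star)$, the first case places $\alpha$ in $A_\mu(Y)$ and the second places $\alpha$ in $B_\mu(Y)$, both contradicting $\alpha\in X_\mu(Y)$. Hence T3 holds up to the escape time, and one checks the solution cannot escape $M$ in finite $s$: with $\rho_\alpha\in(0,1)$ decreasing, $r_\alpha$ is increasing and grows at most linearly (so it stays in a compact subinterval of $\mathbb{R}_{>0}$), $z_\alpha$ is increasing, $\Gamma(r_\alpha,z_\alpha)$ and $\gamma(\rho_\alpha,r_\alpha,z_\alpha)$ are bounded on compact $s$-intervals away from the tip, and the $h$- and $\Psi$-equations are linear in $(h,\Psi)$ with locally bounded coefficients, ruling out finite-$s$ blow-up or vanishing. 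Thus the escape time is $+\infty$ and T3 holds on $(0,\infty)$.

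For part (ii) the shape variables come first. T3 makes $\rho_\alpha$ monotone decreasing and bounded below by $0$, so $\rho_\alpha(s)\to\rho_\infty\ge 0$, and $\rho_\alpha'\in L^1(1,\infty)$ yields $\rho_\alpha'(s_n)\to 0$ along some $s_n\to\infty$. Suppose $\rho_\infty>0$. Then $r_\alpha\to\infty$ and $z_\alpha\to\infty$ linearly in $s$, so $\Gamma(r_\alpha,z_\alpha)$ converges to a constant in $(0,2]$; since the $\rho$-equation of \eqref{finalfull} has right-hand side $\frac{3}{2}\frac{(1-\rho)^2}{r}\bigl(-1+\mu(\Psi)\Gamma\rho\sqrt{1-\rho^2}/r^3\bigr)$ with $\frac{(1-\rho)^2}{r}>0$, an eventually non-negative bracket gives $\rho_\alpha'\ge 0$, contradicting T3, while an eventually negative bracket gives $\rho_\alpha'\sim -c/s$ with $c>0$, hence $\rho_\alpha\to-\infty$, contradicting $\rho_\alpha>0$. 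Either way $\rho_\infty=0$. Consequently $z_\alpha'=\sqrt{1-\rho_\alpha^2}\to 1$, so $z_\alpha\to\infty$. It then remains to upgrade $\rho_\alpha\to0$ to the integrability $\int_0^\infty\rho_\alpha(s)\,ds<\infty$, i.e.\ $r_\alpha\to r_\infty$ with $0<r_\infty<\infty$ (the lower bound being immediate from monotonicity), by re-examining the $\rho$-equation once $r_\alpha$ is large and comparing $\rho_\alpha'$ against a summable upper bound.

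With the asymptotics of $\rho_\alpha,r_\alpha,z_\alpha$ established, the remaining limits of T4 follow from the $h$- and $\Psi$-equations, which now form a linear non-autonomous planar subsystem whose coefficients — built from $\gamma,\Gamma,\mu(\Psi),\rho_\alpha$ — have known rates ($\gamma$ decaying like $s^{-2}$, $\Gamma\to 2$, $\rho_\alpha/r_\alpha\to 0$). From the structure of this subsystem one reads off $h_\alpha(s)\to h_\infty>0$ and $\Psi_\alpha(s)\to\infty$, using $\mu\in\mathcal F$; the divergence $\Psi_\alpha\to\infty$ is consistent with the integral-equation interpretation of $\Psi$ recalled after T4. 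Combined with T1--T3, this is exactly Definition~\ref{defi:steadytipgrowth}.

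The hard part — and the reason this is a conjecture rather than a theorem — is the asymptotic analysis in part (ii). In the toy model the corresponding step is carried out in the proof of Lemma~\ref{lem:X} by exploiting planarity, reducing $\rho_\beta$ to the single-variable graph $\varrho(\,\cdot\,,\beta)$ of Lemma~\ref{lem:varrho} and using monotonicity; no such reduction is available for the five-dimensional system. Two points are genuinely delicate: first, excluding a slow blow-up $r_\alpha\to\infty$ with $\rho_\alpha\to 0$ (so that $r_\infty$ is finite), which requires controlling $\Psi_\alpha$, hence $\mu(\Psi_\alpha)$, well enough to bound the term $\mu(\Psi)\Gamma\rho\sqrt{1-\rho^2}/r^3$ in the $\rho$-equation and, in particular, to rule out the borderline regime in which this term tends to $1$; and second, pinning down the $(h_\alpha,\Psi_\alpha)$ asymptotics for a viscosity function about which only $\mu'>0$ and $\lim_{\Psi\to\infty}\mu(\Psi)=\infty$ are assumed — the absence of any prescribed growth rate for $\mu$ is the crux of the difficulty. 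Establishing Conjecture~\ref{con:xa} (so that $x_\alpha$ and $F_\mu$ are even well defined and continuous) and identifying a set $Y$ on which the trichotomy $A_\mu(Y),B_\mu(Y),X_\mu(Y)$ is exhaustive are prerequisites that would have to be secured first.
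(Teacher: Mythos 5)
The statement you were asked to prove is labelled a Conjecture and the paper does not prove it; the paper's Discussion explains why (the BATS model, unlike the toy model, does not isolate a unique base equilibrium, so T4 cannot be pinned down from the $\rho$-equation alone) and proves only a fragment, Lemma~\ref{ap:lem:X}, establishing forward-global existence under Conjecture~\ref{con:xa}. Your part~(i) reproduces that fragment faithfully: the argument that $s^\star$ cannot sit strictly inside the existence interval without landing $\alpha$ in $A_\mu(Y)$ or $B_\mu(Y)$, and the boundedness of all five components on bounded $s$-intervals (linearity of the $h$- and $\Psi$-equations plus $|\rho_\alpha|<1$ controlling $r_\alpha$ and $z_\alpha$), is essentially the paper's Lemma~\ref{ap:lem:X}, differing only in presentation.

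For part~(ii) you are candid that this is the open content, and that candour matches the paper's own assessment. Still, two gaps in the sketch are worth flagging. First, the dichotomy between an eventually nonnegative and an eventually negative bracket in the $\rho$-equation is not exhaustive: the bracket can oscillate around zero, or tend to zero from below at a summable rate, in which case $\rho_\alpha$ decays while $r_\alpha$ still diverges; you later single out the ``borderline regime in which the term tends to $1$'' as the delicate point, but the case analysis as written does not cover it, so $\rho_\infty=0$ and $r_\infty<\infty$ are not actually reached. Second, the quoted decay $\gamma\sim s^{-2}$ in the $(h,\Psi)$-subsystem is off: with $z_\alpha\sim s$ and $r_\alpha$ bounded, the denominator $(z^2+r^2)^{3/2}$ in $\gamma$ scales like $s^3$, giving $\gamma=O(s^{-3})$. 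Neither slip damages your overall diagnosis, which is the correct one and is the paper's: T1--T3 plus global existence are tractable and proved, T4 is the crux, and the lack of a prescribed growth rate for $\mu$ and of a distinguished base equilibrium are exactly why the statement remains a conjecture.
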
 

Observe that Lemma \ref{con:X} is a toy version of Conjecture \ref{con:X}.


\subsubsection{Bifurcation diagrams}

In \cite{JONIEEE} the sets $A_\mu(Y),B_\mu(Y)$  have been numerically approximated for a variety of viscosity functions $\mu$. Given a viscosity function $\mu$ we observe one of the following three cases:
\begin{enumerate}
\item $A_\mu(Y) \neq \emptyset, \; B_\mu(Y) = \emptyset$, 
\item $A_\mu(Y) = \emptyset, \; B_\mu(Y) \neq \emptyset$,
\item $A_\mu(Y) \neq \emptyset, \; B_\mu(Y) \neq  \emptyset$.
\end{enumerate}
Only in case 3 the dynamics changes in a qualitative way as the parameter is varied. Hence, the resulting figures can be interpreted as bifurcation diagrams.  For cases 1 and 2 the numerics suggests that $X_\mu(Y) = \emptyset$.  

In Figure \ref{fig:Xbifu} a schematic of case 3 is given. It suggests that if $X_\mu(Y) \neq \emptyset $ then $X_\mu(Y)$ is described by a 1-dimensional smooth curve.

\begin{figure}[h]
\begin{center}
\includegraphics[width=5cm]{./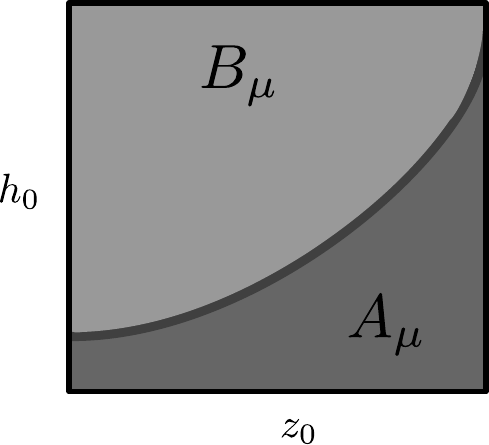}
\caption{Tip solution bifurcation diagram. The square domain corresponds to $Y$. The domain $A_\mu(Y)$ and $B_\mu(Y)$ have been indicated with two different shades of grey. If steady tip growth solutions exist then they must correspond to tip solutions with parameters in $X_\mu(Y) := Y  \setminus (A_\mu(Y) \cup B_\mu(Y))$ which is the curve colored in the darkest grey. \label{fig:Xbifu}}
\end{center} \end{figure} 

Assuming that $X_\mu(Y) \neq \emptyset$  the numerical work suggests that $X_\mu(Y)$ is a 1-dimensional family of bifurcation points since it is located on the boundary of both $A_\mu(Y)$ and $B_\mu(Y)$, see Figure \ref{fig:Xbifu}. Hence, we are dealing with a codimension-1 bifurcation. The definition of $A_\mu(Y)$ and $B_\mu(Y)$ in \eqref{AenB} gives no information on the topology of $A_\mu(Y)$ and $B_\mu(Y)$. Consequently, there is no evidence  that in Conjecture \ref{con:X} the condition  $X_{\mu} \neq \emptyset$ is satisfied. For standard bifurcations, such as a saddle node or Hopf bifurcations, it is straightforward to determine the bifurcation point from a local study. In contrast, the set $X_\mu(Y)$ cannot be determined using a local study since the sets $A_\mu(Y)$ and $B_\mu(Y)$ are not characterized by local behaviour.


\subsubsection{Existence steady tip growth solutions}
\label{sec:exist}

To apply Conjecture \ref{con:X} we require that $X_\mu(Y)\neq \emptyset$. Observe that Figure \ref{fig:Xbifu} gives no evidence that $X_\mu(Y) \neq \emptyset$ since the topology of $A_{\mu}(Y)$ and $B_{\mu}(Y)$ is unknown. Based on Lemma \ref{lem:AB} we formulate the final conjecture:

\begin{con} There exists a connected set $Y \subset Y_\mu$ such that $A_\mu(Y)$ and $B_\mu(Y)$ are non-empty, open, and disjoint sets.
\label{con:AB}
\end{con}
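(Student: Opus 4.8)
\emph{Proof proposal.} The plan is to transport the proof of Lemma \ref{lem:AB} to the BATS model \eqref{finalfull}, using Conjecture \ref{con:xa} in place of Corollary \ref{cor:toytip}. Throughout I would work with the two-parameter family $\alpha \mapsto x_\alpha(\,\cdot\,;\mu)$ of tip solutions of Definition \ref{defi:tip}, which Conjecture \ref{con:xa} supplies together with continuity of $F_\mu$ on an open set $Y_\mu \subseteq \mathbb{R}_{>0}\times\mathbb{R}_{<0}$. For the connected set in the statement I would take $Y_\mu$ itself, or---if $Y_\mu$ is disconnected---a maximal connected open subregion meeting the two limiting regimes identified below; since $\mathbb{R}_{>0}\times\mathbb{R}_{<0}$ is connected, under the stronger form of Conjecture \ref{con:xa} suggested by the numerics ($Y_{\tilde\mu}=\mathbb{R}_{>0}\times\mathbb{R}_{<0}$) this choice is automatic.

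\emph{Disjointness} is the cleanest step and does not use planarity. Evaluating the $\rho$-equation of \eqref{finalfull} at $\rho=0$ gives $\rho'=-\tfrac{3}{2r}<0$, because every other term in the bracket carries a factor $\rho$. Hence along any tip solution a zero of $\rho$ is crossed transversally, and $\rho$ and $\rho'$ can never vanish at the same $s$. If some $\alpha$ lay in both $A_\mu(Y)$ and $B_\mu(Y)$, with witnesses $s_0^A$ and $s_0^B$, then on $(0,s_0^A)$ and on $(0,s_0^B)$ the solution stays in $\{\rho>0,\ \rho'<0\}$ (it starts there since $x_\alpha$ is a tip solution, and $\rho\rho'<0$ prevents either factor from changing sign without first vanishing). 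Comparing $s_0^A$ and $s_0^B$ produces a contradiction: $s_0^A<s_0^B$ would give $\rho(s_0^A)>0$, contradicting $\rho(s_0^A)=0$; $s_0^A>s_0^B$ would give $\rho'(s_0^B)<0$, contradicting $\rho'(s_0^B)=0$; and $s_0^A=s_0^B$ would force $\rho(s_0^A)=\rho'(s_0^A)=0$, which was just excluded.

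For \emph{openness} I would use continuity of $F_\mu$ together with transversality of the defining events. The $A_\mu(Y)$-case is free: at the first zero $s_0$ of $\rho_\alpha$ one has $\rho_\alpha'(s_0)=-3/(2r_\alpha(s_0))<0$, so a nearby tip solution crosses $\rho=0$ with negative slope at a nearby time, and continuity on a compact $s$-interval plus the tip asymptotics (T1--T2) keeps it in $\{\rho>0,\ \rho'<0\}$ before that time. The $B_\mu(Y)$-case requires the critical point $s_0$ of $\rho_\alpha$ to be nondegenerate, $\rho_\alpha''(s_0)\neq0$; differentiating the $\rho$-equation along the flow at $s_0$ (where the bracket vanishes) reduces this to a sign condition on a radial derivative of the bracket, which I expect to hold on the portion of $Y$ where $B_\mu(Y)$ lives---this mirrors the corresponding verification in the proof of Lemma \ref{lem:AB} and is routine, if less automatic than the $A$-case.

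\emph{Non-emptiness} is the genuinely hard ingredient and the place where the higher dimension bites. In the toy model one uses that as $\beta\to0$ the equation degenerates so that $\rho$ is forced through zero ($\beta\in A_{\rm toy}$), while as $\beta\to\infty$ the term $\beta r^2 g(r^2)$ dominates and turns $\rho$ around before it reaches $0$ ($\beta\in B_{\rm toy}$); openness then inflates these boundary parameters to open sets. The analogue requires identifying two regimes of $\alpha=(h_0,z_0)$ in which the quantity $\mu(\Psi)\Gamma(r,z)/r^3$ along the tip solution is controllable: an ``$A$-regime'' (say $z_0\to0^-$ or $h_0\to0^+$) in which it stays bounded long enough for the bracket to remain negative until $\rho$ hits $0$, and a ``$B$-regime'' (the opposite extreme, driven by $\lim_{\Psi\to\infty}\mu(\Psi)=\infty$) in which it blows up and forces $\rho'=0$ with $\rho>0$. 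Unlike in the planar toy case, verifying either statement means tracking the auxiliary variables $h,\Psi,z$ along the tip solution on the whole interval $(0,s_0)$---presumably by comparison and monotonicity arguments in the spirit of the proofs of Lemmas \ref{lem:X} and \ref{lem:Rb}---and then combining connectedness of $Y$ with membership in both regimes. I expect this coupling of global control of the five-dimensional flow with the singular tip analysis underlying Conjecture \ref{con:xa} to be the principal obstacle; disjointness and the $A$-side of openness come essentially for free, and the $B$-side of openness is routine.
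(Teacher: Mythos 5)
The statement you are addressing is Conjecture~\ref{con:AB}: the paper itself provides no proof of it, only a partial result (Lemma~\ref{ap:lem:AB}, which establishes disjointness of $A_\mu(Y),B_\mu(Y)$ and openness of $A_\mu(Y)$ under Conjecture~\ref{con:xa}) together with a discussion in Section~\ref{sec:disc} of what remains open. Your proposal correctly reproduces that partial result. Your disjointness argument---that along the BATS $\rho$-equation any zero of $\rho$ is crossed with $\rho'=-3/(2r)<0$, so $\rho$ and $\rho'$ cannot vanish at the same $s$, after which comparison of the witnesses $s_0^A,s_0^B$ yields a contradiction---is exactly the mechanism compressed into the paper's phrase ``$\lim_{s\rightarrow s_0}r_\alpha(s_0)=\infty$'' in Lemma~\ref{ap:lem:AB}, and you correctly observe that it does not rely on planarity or on the equilibrium structure used in the toy model. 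Your treatment of openness of $A_\mu(Y)$ (transversality of the crossing together with continuity of $F_\mu$) also matches Lemma~\ref{ap:lem:AB}.

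Where you diverge from the paper's own assessment is in calling the openness of $B_\mu(Y)$ ``routine.'' The Discussion explicitly records that the toy-model argument (the $\rho_\beta''(s_0)>0$ computation in the proof of Claim~3 of Lemma~\ref{lem:AB}) does \emph{not} carry over: differentiating the $\rho$-equation of~\eqref{finalfull} along the flow at $s_0$ pulls in $\Psi'$, $h$ and $z'$ through $\mu(\Psi)$ and $\Gamma(r,z)$, and the sign of the resulting expression for $\rho_\alpha''(s_0)$ is not controlled by the hypotheses~\eqref{viscocondi} on $\mu$ alone; this is left as part of the conjecture, not a routine verification. Your identification of non-emptiness as the principal obstacle, and of the degenerate regime $h_0\to 0^+$ as a likely source of $A$-points (the paper notes the equations decouple at $h=0$), is consistent with the paper's hints, as is reaching for $\lim_{\Psi\to\infty}\mu(\Psi)=\infty$ in a $B$-regime; but neither regime is worked out in the paper, and both require global control of the $(h,\Psi,z)$-components along tip solutions that is currently unavailable. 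In short: your sketch is a faithful account of the paper's Lemma~\ref{ap:lem:AB} plus Discussion, and your gap assessment is accurate except for underestimating the difficulty of $B$'s openness.
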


\begin{theo}[Existence of steady tip growth solutions] Let Conjecture \ref{con:xa} be true and suppose that Conjecture \ref{con:X} and Conjecture \ref{con:AB} are true for the same $Y \subset Y_\mu$. Then, a steady tip growth solution as given by Definition \ref{defi:steadytipgrowth} exists. \label{theo:full}
\end{theo}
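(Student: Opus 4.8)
The statement is essentially a direct transcription of the logical skeleton used to prove Theorem~\ref{theo:maintoy}, now applied to the BATS model. The plan is to run the same three-step argument: (1) use Conjecture~\ref{con:xa} to obtain, on the chosen connected set $Y\subset Y_\mu$, a two-parameter family $\alpha\mapsto x_\alpha(\,\cdot\,;\mu)$ of tip solutions that depends continuously on $\alpha$; (2) invoke Conjecture~\ref{con:AB} for this particular $Y$ to conclude that $A_\mu(Y)$ and $B_\mu(Y)$ are non-empty, open, and disjoint subsets of the connected set $Y$, which forces $X_\mu(Y):=Y\setminus(A_\mu(Y)\cup B_\mu(Y))$ to be non-empty; (3) apply Conjecture~\ref{con:X} (valid for the same $Y$ by hypothesis) to the non-empty set $X_\mu(Y)$, obtaining some $\alpha_*\in X_\mu(Y)$ for which $x_{\alpha_*}(\,\cdot\,;\mu)$ is a steady tip growth solution in the sense of Definition~\ref{defi:steadytipgrowth}. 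This is exactly the BATS-model analogue of the one-line deduction ``Lemma~\ref{lem:AB} implies $X_{\rm toy}\neq\emptyset$; then Lemma~\ref{lem:X} and Corollary~\ref{cor:toytip} give the result.''

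First I would make the non-emptiness step precise. Since $Y$ is connected and $A_\mu(Y)$, $B_\mu(Y)$ are open, non-empty, and disjoint, if they covered $Y$ then $Y=A_\mu(Y)\sqcup B_\mu(Y)$ would be a separation of a connected space, a contradiction; hence $X_\mu(Y)=Y\setminus(A_\mu(Y)\cup B_\mu(Y))\neq\emptyset$. (Note this is where the word ``connected'' in Conjecture~\ref{con:AB} does real work, in contrast to the toy case where $\mathbb{R}_{>0}$ is automatically connected; one should double-check that Conjecture~\ref{con:AB} is indeed stated with a connected $Y$ and that this same $Y$ is the one for which Conjecture~\ref{con:X} is assumed — the theorem's hypothesis ``for the same $Y$'' is what ties the two together.) Then I would simply pick any $\alpha_*\in X_\mu(Y)$ and feed it to Conjecture~\ref{con:X}.

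The remaining care is bookkeeping around the definitions. One should verify that membership in $X_\mu(Y)$ really does encode conditions T1--T3: T1 and T2 hold because $x_{\alpha_*}$ is a tip solution (Definition~\ref{defi:tip}), and T3 (i.e. $\rho'<0$, $\rho>0$ on all of $\mathbb{R}_{>0}$) is precisely the statement that $\alpha_*$ avoids both $A_\mu(Y)$ and $B_\mu(Y)$ — the complementary sets capturing a first sign change of $\rho$ or of $\rho'$ — which is the content illustrated by Figure~\ref{fig:Xmu}. Conjecture~\ref{con:X} then upgrades T1--T3 to T4, completing Definition~\ref{defi:steadytipgrowth}. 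I would phrase the proof in three short sentences mirroring the proof of Theorem~\ref{theo:maintoy}.

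I do not expect a genuine obstacle here, since the statement is conditional on three conjectures that were manufactured precisely to be the hypotheses of this argument; the ``hard part'' has been externalized into Conjectures~\ref{con:xa}, \ref{con:X}, and~\ref{con:AB}. The only subtlety worth flagging in the write-up is the quantifier structure: Conjecture~\ref{con:X} and Conjecture~\ref{con:AB} must hold for \emph{one common} $Y$, and the connectedness of that $Y$ is essential for the separation argument — so the proof should state explicitly ``let $Y\subset Y_\mu$ be the connected set for which both Conjecture~\ref{con:X} and Conjecture~\ref{con:AB} hold'' before proceeding.
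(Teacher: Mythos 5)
Your proposal is correct and matches the paper's own proof, which is just the three-conjecture deduction you describe: Conjecture~\ref{con:AB} gives $X_\mu(Y)\neq\emptyset$ (the paper leaves the connectedness-separation step implicit, which you usefully spell out), and then Conjectures~\ref{con:xa} and~\ref{con:X} conclude. The paper's version is two sentences; yours is a more explicit rendering of the same argument.
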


\begin{proof}
Conjecture \ref{con:AB} implies that $X_\mu(Y) \neq \emptyset$. The theorem then follows from Conjectures \ref{con:xa} and \ref{con:X}. 
\end{proof}


\section{Discussion}
\label{sec:disc}

To prove the existence of steady tip growth solutions as specified by Definition \ref{defi:steadytipgrowth} the Conjectures \ref{con:xa}, \ref{con:X}, and \ref{con:AB} need to proven. The proofs for the toy model give an insight in how to prove these conjectures:

\begin{itemize}
\item[-] \textbf{Conjecture \ref{con:xa}:}  As with the toy model the $\rho$-equation of \eqref{finalfull} is not defined for $(\rho,r)=(1,0)$. In Appendix \ref{sec:exuntoytip} a transformation is introduced which reduces the existence of toy tip solution, Definition \ref{def:toytip}, to the existence of an unstable manifold. It is expected that the transformation can be used to prove the existence of the BATS model's tip solutions, Definition \ref{defi:tip}, by reducing the proof to the existence of an unstable manifold. Asymptotic analysis performed in \cite{JONIEEE} reveals that $Y_\mu$ is dependent on $\mu$. This is different from the toy model \eqref{toygov} since toy tip solutions, Definition \ref{def:toytip}, exist for all $\beta$, Corollary \ref{cor:toytip}.
\item[-] \textbf{Conjecture \ref{con:X}:} Let $\alpha \in X_\mu(Y)$. If Conjecture \ref{con:xa} can be proven then it follows that $x_{\alpha}(\, \cdot \, ; \mu)$ satisfies T1,T2 from Definition \ref{defi:steadytipgrowth}. If the maximal existence interval of $x_{\alpha}(\, \cdot \, ; \mu)$ is $\mathbb{R}_{>0}$ then it will also satisfy T3. The maximal existence interval depends on all the variables. Study of $\rho,r$ as in the toy model is insufficient. However, it is straightforward to prove that the maximal existence interval is $\mathbb{R}_{>0}$, see Lemma \ref{ap:lem:X} in Appendix \ref{ap}. It remains to show T4. For every $\beta$ the toy model \eqref{toygov}  has a unique base limit, S4 in Definition \ref{def:toytip}. This is not the case for the BATS model \eqref{finalfull}. Hence, it is unclear how to prove that T4 of Definition \ref{defi:steadytipgrowth} is satisfied. 
\item[-] \textbf{Conjecture \ref{con:AB}:} Using the same method as in the proof of Lemma \ref{lem:AB} in Appendix \ref{p:lem:AB} we can show the disjointness of $A_{\mu}(Y)$, $B_{\mu}(Y)$ and the openness of $A_{\mu}(Y)$, see Lemma \ref{ap:lem:AB} in Appendix \ref{ap}. The proof of Lemma \ref{lem:AB} does not succeed in proving the openness of $B_{\mu}(Y)$ due to the dependency on the $h,\Psi,z$-variable. As in the proof of Lemma \ref{lem:AB} in Appendix \ref{p:lem:AB} the non-emptiness of $A_{\mu}(Y)$, $B_{\mu}(Y)$ is likely to be obtained by studying degenerate cases. Specifically, for the BATS model \eqref{finalfull} the equations decouple for $h=0$.
\end{itemize}

In conclusion, the application of the toy model's proof-method to the BATS model has been divided into sub-problems which can be studied in future research.  


\subsection*{Acknowledgement} 
This research was partly funded by a PhD grant of the NWO Cluster ``Nonlinear Dynamics in  Natural Systems''(NDNS+) and by NWO VICI grant 639.033.008.


\newpage

\begin{appendices}

\section{Technical proofs I: Construction of toy tip solutions}
\label{sec:exuntoytip}

In this section we present a theorem which constructs toy tip solutions as defined in Definition \ref{def:toytip}. The existence and uniqueness of toy tip solutions presented in Section \ref{sec:toytip} as Corollary \ref{cor:toytip} will follow from this construction theorem. In addition, we will also obtain the smoothness of $\varrho$, defined in \eqref{varrho}, as a corollary of the construction theorem.

Observe that the vector field corresponding to the toy model \eqref{toygov} is not defined for the tip limits given by S1 in Definition \ref{def:toytip}. We apply a series of transformations to the toy model \eqref{toygov} such that we can apply an equilibrium study to the resulting system and show that there exists a solution on the unstable manifold which satisfies the properties of toy tip solutions. 


\subsection{Change of independent variable}

We first perform a change of independent variable. Since in the definition of toy tip solutions, Definition \ref{def:toy}, the limiting asymptotic is described in terms of the variable $s$ we need to be precise with the variable change. For general theory on singularities see \cite{AR92}.

Let $y:=(\rho,r)$ be a solution of  the toy model (\ref{toygov}) restricted to the phase space $M_1$ defined in \eqref{M1}. Let $y$ be defined on the interval $S \subset \mathbb{R}$. Given a $s_1 \in S$ let  $\tau_y : S \rightarrow \mathbb{R}$ satisfy 
\begin{align}
\frac{d \tau_y}{d s} = \frac{\rho}{r}, \qquad \tau_x(s_1)=0. \label{tautaueq2}
\end{align}
Then $\tau_y$ is a diffeomorphism on its range. We introduce the new independent variable $t$ by  $s=\tau^{-1}_y(t)$. We denote the $t$-dependent variables by a hat. The ODE for the $t$-dependent variables is given by 
\begin{gather}
\begin{aligned}
\frac{d\hat{\rho}}{dt} &= \frac{3}{2} \frac{(1-\hat{\rho}^2)}{\hat{\rho}} \left( -1 + \frac{( \hat{\rho} + \beta \hat{r}^2  g(\hat{r}^2) )\sqrt{1-\hat{\rho}^2}}{\hat{r}} \right), \\
\frac{d \hat{r}}{dt} &= \hat{r}. 
\end{aligned} \label{toygovtau}
\end{gather}
As phase space we take the set $M_1$ as defined in equation \eqref{M1}.

Observe that equation \eqref{tautaueq2} allows us to describe $t$ in terms of the $s$-dependent variables. We also need a transformation to describe $s$ in terms of the $t$-dependent variables since we want to prove uniqueness of toy tip solutions.  Let $\hat{y}:=(\hat{\rho},\hat{r})$ be a solution of \eqref{toygovtau} in the phase space $M_1$ which is defined on the interval $T$. This induces a solution for \eqref{toygov}. More specifically, given a $t_0 \in T$ let $\sigma_{\hat{y}}: T \rightarrow \mathbb{R}$ satisfy 
\begin{align}
\frac{d\sigma_{\hat{y}}}{d t} = \frac{\hat{r}}{\hat{\rho}}, \qquad \sigma_{\hat{y}}(t_0)=0.  \label{toysigma}
\end{align}
Then $\sigma_{\hat{y}}$ is a diffeomorphism on its range and $(\hat{\rho} \circ \sigma^{-1}_{\hat{y}} ,\hat{r} \circ \sigma^{-1}_{\hat{y}} )$ is a solution of the $s$-dependent toy model \eqref{toygov}. 

Observe that the vector field of \eqref{toygovtau} is not defined for $ \hat{r}=0$ which is the S1 tip limit condition on the $r$-variable of the tip solution as defined in Definition \ref{def:toytip}. Hence, we need to introduce new dependent variables.


\subsection{The dependent variables $\eta$ and $w$}
\label{subsec:etaw}

We introduce the new dependent variable ${\eta} = \sqrt{1- \hat{\rho}^2}/\hat{r}$. We will see that the resulting $\eta$-equation is quadratic in $\hat{r}$. Hence, we will introduce the new dependent variable $w=\hat{r}^2$. More specifically, we define
\begin{align*}
N_0  :=  \{ ({\eta} ,w) \in \mathbb{R}_{>0} \times \mathbb{R}_{>0}  \; : \; {\eta}^2 w < 1    \}.
\end{align*}
Then the map $\Phi : M_1 \rightarrow N_0$ given by
\begin{align}
\Phi(\hat{\rho},\hat{r}) = \left(\frac{\sqrt{1- \hat{\rho}^2}}{\hat{r}} , \hat{r}^2 \right ) , \label{defphit}
\end{align}
is a diffeomorphism. We consider the ODE corresponding to the variables $({\eta},w) := \Phi(\hat{\rho},\hat{r})$.  The $({\eta},w)$-ODE is given by: 
\begin{gather}
\begin{aligned}
\frac{d {\eta}}{dt} &=  \frac{{\eta}  }{2 }  \left( 1- 3   {\eta} \sqrt{1- {\eta}^2 w } \right) - \frac{3}{2} \beta   {\eta}^2 w g(w) ,\\
\frac{d w}{dt} &=  2w. 
\end{aligned}  \label{etartau}
\end{gather}
Observe that the vector field corresponding to \eqref{etartau} has two equilibria: $(0,0)$ and $(1/3,0)$. The tip limits condition S1 in Definition \ref{def:toy} requires that the tip limit must satisfy $\eta>0$.  Consequently, we are only interested in the equilibrium $q_0: = (1/3,0)$. Observe that $q_0  \notin N_0$. Hence, we consider the $({\eta},w)$-ODE \eqref{etartau} on the phase space
\begin{align*}
 N_1 := \{ ({\eta} ,w ) \in \mathbb{R}_{>0} \times \mathbb{R}  \; : \; {\eta}^2 w < 1    \}.
\end{align*}
Observe that $q_0 \in N_1$ and that the vector field corresponding to \eqref{etartau} is smooth on $N_1$. Figure \ref{fig:space} shows the relation between the phase spaces $N_0$ and $N_1$.

\begin{figure}[h]
\begin{center}
\includegraphics[width=3cm]{./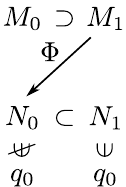}
\caption{Relation between different phase spaces. We considered the toy model \eqref{toygov} on the phase space $M_0$. Then, the phase space was restricted to $M_1$ such that we have the diffeomorphism $\Phi: M_1 \rightarrow N_0$. The phase space $N_0$ contains no equilibria. We then extend the phase space to $N_1$ which contains the equilibrium $q_0 = (1/3, 0)$.  \label{fig:space}}
\end{center} \end{figure} 


\subsection{Toy $t$-tip solutions}
\label{sec:toyttip}

We need to formulate a definition of toy tip solutions in the new variables:

\begin{defi}[Toy $t$-tip solutions] A solution $(\eta,w)$ of the ODE \eqref{etartau} is a \emph{toy $t$-tip solution} if and only if it satisfies: 
\begin{itemize}
\item[$S'1$] \textbf{ Limit $t \rightarrow -\infty$:} 
\begin{align*}
\lim_{t \rightarrow -\infty} \eta(t) = \frac{1}{3}, \qquad \lim_{t \rightarrow -\infty} w(t) =0 .
\end{align*}
\item[$S'2$] \textbf{Analyticity in $w$:} There exists a $t_0\in \mathbb{R}$ and a $G \in C^{\omega}\left( (-a ,a) , \mathbb{R} \right)$ with $a=w(t_0)$ such that
\begin{align*}
\eta(t)= G(w(t)) \qquad \forall t \in (-\infty,t_0).
\end{align*}
\end{itemize}
\label{def:toyttip}
\end{defi}

\begin{lem} Let $\tau_y, \sigma_{\hat{y}}$ and $\Phi$ be given by \eqref{tautaueq2}, \eqref{toysigma} and \eqref{defphit}, respectively. Then we have the following: 
\begin{itemize}
\item[1.] If $y$ is a toy tip solution (Definition \ref{def:toytip}), then $\Phi(y \circ \tau^{-1}_{y})$ is a toy $t$-tip solution (Definition \ref{def:toyttip}),  
\item[2.] If $\hat{y}$ is a toy $t$-tip solution (Definition \ref{def:toyttip}), then $\Phi^{-1}(\hat{y} \circ \sigma^{-1}_{\hat{y}})$ is a toy tip solution (Definition \ref{def:toytip}) up to translation in the independent variable.
\end{itemize} 
\label{lem:toyequiv}
\end{lem}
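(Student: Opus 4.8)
The plan is to prove the two implications separately, in each case chasing the definitions through the three changes of variable (the time reparametrisation $\tau_y$ or $\sigma_{\hat y}$, and the diffeomorphism $\Phi$) and checking that the asymptotic conditions S1, S2 translate into $S'1$, $S'2$ and vice versa. The key structural fact I would establish first is the dictionary between limits: a toy tip solution has $\rho \to 1$, $r \to 0$ and $\sqrt{1-\rho^2}/r \to \eta_0$ as $s \to 0^+$ (condition S1), so under $\Phi$ the point $(\rho,r)$ is carried to $(\eta, w) = (\sqrt{1-\rho^2}/r,\, r^2)$, whence $w \to 0$ and $\eta \to \eta_0$. The crucial point is to identify $\eta_0 = 1/3$: this is forced by the ODE itself, since on any toy tip solution the relation S2 together with the $\rho$-equation of \eqref{toygov} pins the tip value, and correspondingly $q_0 = (1/3,0)$ is the only equilibrium of \eqref{etartau} with positive $\eta$. (This is exactly the statement already used earlier in the excerpt that the last limit in S1 is $\eta_0 > 0$ and that the relevant equilibrium is $q_0$.)

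\textbf{Step 1: the time change sends $s \to 0^+$ to $t \to -\infty$.} From \eqref{tautaueq2}, $d\tau_y/ds = \rho/r$, and near the tip $\rho \to 1$ while $r \to 0^+$, so $d\tau_y/ds \sim 1/r \to +\infty$; more precisely, since $r' = \rho \to 1$ we have $r(s) \sim s$, hence $d\tau_y/ds \sim 1/s$ and $\tau_y(s) \sim \log s \to -\infty$ as $s \to 0^+$. Thus $\tau_y$ maps a right-neighbourhood of $0$ diffeomorphically onto a left-neighbourhood of $-\infty$, and $y \circ \tau_y^{-1}$ is defined on some interval $(-\infty, t_0)$. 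Composing with $\Phi$ and using Step 0's dictionary gives $\lim_{t \to -\infty}\eta(t) = 1/3$ and $\lim_{t\to-\infty} w(t) = 0$, which is $S'1$. For $S'2$: by S2 there is $G \in C^\omega((-a,a),\mathbb{R}_{>0})$ with $\rho_*(s) = G(r_*(s)^2)$ for $s \in (0,s_0)$; then $\eta = \sqrt{1-\rho^2}/r = \sqrt{1-G(w)^2}/\sqrt{w}$ as a function of $w = r^2$. A priori this has a $\sqrt{w}$ in the denominator, so I must check analyticity: because $\rho \to 1$ and the $\eta$-limit exists, $1 - G(w)^2$ vanishes to first order in $w$ at $w=0$, so $1-G(w)^2 = w\,\tilde G(w)$ with $\tilde G$ analytic and $\tilde G(0) = \eta_0^2 = 1/9 > 0$; hence $\eta = \sqrt{\tilde G(w)}$ is analytic in $w$ near $0$, giving $S'2$. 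This last analyticity check is the one genuinely non-formal point, and it is where I expect to spend the most care.

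\textbf{Step 2: the reverse implication.} Given a toy $t$-tip solution $\hat y = (\hat\rho,\hat r)$ — equivalently $(\eta,w)$ via $\Phi$ — I run the same argument backwards using $\sigma_{\hat y}$ from \eqref{toysigma}, $d\sigma_{\hat y}/dt = \hat r/\hat\rho$. Near $t = -\infty$ we have $w \to 0$, so $\hat r = \sqrt w \to 0$, and $\hat\rho = \sqrt{1 - \eta^2 w} \to 1$; from the $w$-equation $dw/dt = 2w$ we get $w(t) \sim C e^{2t}$, hence $\hat r(t) \sim \sqrt C\, e^{t}$ and $d\sigma_{\hat y}/dt \sim \hat r \sim \sqrt C\, e^t$, so $\sigma_{\hat y}(t)$ converges to a finite limit as $t \to -\infty$; translating the independent variable we may take that limit to be $0$, which is why the statement says "up to translation". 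Thus $\sigma_{\hat y}$ maps a neighbourhood of $-\infty$ diffeomorphically to a right-neighbourhood of $0$, and $\Phi^{-1}(\hat y \circ \sigma_{\hat y}^{-1})$ is defined on some $(0,s_0)$ and solves \eqref{toygov} by the remark after \eqref{toysigma}. The tip limits S1 follow from $w \to 0$, $\eta \to 1/3$ and the dictionary ($\rho = \sqrt{1-\eta^2 w} \to 1$, $r = \sqrt w \to 0$, $\sqrt{1-\rho^2}/r = \eta \to 1/3 > 0$); condition S2 follows by inverting the relation in Step 1, writing $\rho = \sqrt{1 - \eta(w)^2 w}$ with $\eta(\cdot)$ analytic by $S'2$, which is manifestly analytic in $w = r^2$. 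Finally, the inequalities \eqref{localtoytipineq} defining a toy tip solution, $\rho' < 0$ and $\rho > 0$ on some $(0,s_0)$, hold for $s_0$ small because $\rho \to 1 > 0$ and $\rho' \to 0^-$: one checks the sign of $d\hat\rho/dt$ near the equilibrium $q_0$ from \eqref{toygovtau} (or equivalently that the approach to $q_0$ is from the side $\eta < 1/3$, $w>0$, which corresponds to $\rho$ slightly below $1$ and decreasing), and since $ds/dt > 0$ the sign of $\rho'$ in $s$ matches that of $d\hat\rho/dt$. This concludes both directions.

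\textbf{Main obstacle.} The routine part is the bookkeeping of which direction of time corresponds to which end of the $s$-interval; the substantive part is the analyticity transfer between S2 and $S'2$ through the square root, i.e. showing that $1 - G(w)^2$ (resp. $1 - \eta(w)^2 w$) vanishes to exactly first order so that the square root stays analytic — this uses that the tip value $\eta_0$ is a strictly positive finite number, which in turn is the content of the equilibrium being $q_0 = (1/3,0)$ rather than $(0,0)$. Everything else is a direct computation with the explicit diffeomorphism $\Phi$ and the two ODEs \eqref{toygovtau}, \eqref{etartau}.
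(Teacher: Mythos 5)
Your strategy is correct and matches the paper's overall architecture (time change, then $\Phi$, then matching S1/S2 with S$'$1/S$'$2), but two ingredients differ. For the identification $\eta_0 = 1/3$, the paper argues directly with the $\tilde\eta$-equation \eqref{tildeetaeq}: S1, S2 force $\lim_{s\to 0}\tilde\eta'(s)=0$, and inspecting \eqref{tildeetaeq} as $r\to 0$ (the coefficient $\rho/(2r)$ blows up) then forces $1-3\tilde\eta\to 0$. You instead gesture at an $\alpha$-limit argument — the limit point of a convergent trajectory of a smooth autonomous vector field is an equilibrium, and the only equilibrium of \eqref{etartau} with $\eta>0$ is $q_0$. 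That route is legitimate and cleaner, but you should state the $\alpha$-limit lemma explicitly rather than leaving "forced by the ODE" as a gesture; as written it is also slightly out of order, since it needs the time change of your Step 1 to have taken place first. On the other hand, your treatment of the analyticity transfer for S$'$2 — factoring $1-G(w)^2 = w\,\tilde G(w)$, noting $\tilde G(0)=\eta_0^2>0$, and concluding $\eta=\sqrt{\tilde G(w)}$ is analytic — is substantially more careful than the paper's one-line "since $\Phi$ is analytic", which on its own does not address the division by $\sqrt{w}$ at $w=0$; your version is the argument the paper should have written. The reverse direction, including the computation $w(t)=Ce^{2t}$, the convergence of $\sigma_{\hat y}$ to a finite $s_0$, and the sign of $\rho'$ near $s_0$ deduced from $d\hat\rho/dt$ together with $ds/dt>0$, matches the paper's proof.
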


\begin{proof}
We will first prove statement 1. Let $y=(\rho,r)$ be a toy tip solution as specified by Definition \ref{def:toytip}. We will prove the following:

\qquad \textbf{Claim I:}
\begin{align*}
\lim_{s \rightarrow 0} \Phi(y(s))  = \left(  \frac{1}{3} , 0 \right).
\end{align*}
Since $y$ satisfies S1 of Definition \ref{def:toytip} we only need to prove that
\begin{align*}
\lim_{s \rightarrow 0}  \frac{\sqrt{1-\rho(s)^2}}{r(s)} = \frac{1}{3} .
\end{align*}
From S2 of Definition \ref{def:toytip} it follows that there  a $s_0>0$ and a $G \in C^{\omega}( (-a,a), \mathbb{R}_{>0})$ with $a=r_*(s_0)^2$  such that  
\begin{align*}
\rho(s) = G(r(s)^2) \; \; \; \forall s \in (0,s_0).
\end{align*}
Let $\tilde{\eta}:=\sqrt{1-\rho^2}/r$. Then, it follows from S1, S2 that
\begin{align}
\lim_{s \rightarrow 0}\tilde{\eta}'(s)  =0. \label{tildeetanul}
\end{align}
Using \eqref{etartau} we obtain that
\begin{align}
\tilde{\eta}' =    \frac{{\tilde{\eta}} \rho }{2 r }  \left( 1- 3   {\tilde{\eta}} \sqrt{1- {\tilde{\eta}}^2 r^2 } \right) - \frac{3}{2} \beta  \rho {\tilde{\eta}}^2 r g(r^2)  . \label{tildeetaeq}
\end{align}
Combining \eqref{tildeetanul} and \eqref{tildeetaeq} we get
\begin{align*}
0 &= \lim_{s \rightarrow 0} \frac{{\tilde{\eta}(s)} \rho(s) }{2 r(s) }  \left( 1- 3   {\tilde{\eta}} \sqrt{1- {\tilde{\eta}(s)}^2 r(s)^2 } \right) - \frac{3}{2} \beta  \rho(s) {\tilde{\eta}(s)}^2 r(s) g(r(s)^2)  \\
& \Rightarrow  \lim_{s \rightarrow 0} \tilde{\eta}(s)  = \frac{1}{3} . 
\end{align*}
Hence, we have proven Claim I. 

Observe that from \eqref{tautaueq2} it follows that
\begin{align*}
\lim_{s \rightarrow 0 }\tau_y(s) =  \lim_{s \rightarrow 0}\ln \frac{r(s)}{r(s_1)} = -\infty .
\end{align*}
There exists a $s_2$ such that $\frac{d\tau_y}{dt}(s) >0 $ for all $s \in (0, s_2)$. Consequently, we get that 
\begin{align}
 \lim_{t \rightarrow -\infty }\tau^{-1}_{y}(t) =0 .\label{justtau0}
\end{align}
We then define $\hat{y} :=  \Phi(x \circ \tau^{-1}_{y})$. It follows from Claim I and \eqref{justtau0} that $\hat{y}$ satisfies S$'$1 of Definition \ref{def:toyttip}. Since $\Phi$ is analytic $\hat{y}$ satisfies S$'$2 of Definition \ref{def:toyttip}.

We continue with proving statement 2. Let $\hat{y}=(\eta,w)$ be a $t$-tip solution as specified by Definition \ref{def:toyttip}. We will first prove the following:

\qquad \textbf{Claim II:}
\begin{align*}
\lim_{t \rightarrow - \infty} \Phi^{-1} (\hat{y}(t)) = (1,0). 
\end{align*} 
Using that $\hat{y}$ satisfies S$'$1 we obtain that
\begin{align*}
\lim_{t \rightarrow -\infty} \sqrt{1-w(t) \eta(t)^2} = 1 , \qquad \lim_{t \rightarrow - \infty} w(t) =0
\end{align*}
This proves Claim II. 

Observe that $w$ is a non-zero function. Then, it follows that
\begin{align}
w(t) = c {\rm e}^{2 t},  \label{wclltoy}
\end{align}
with $c$ a positive constant. We consider $\sigma_{\Phi^{-1}(\hat{y})}$. Then using \eqref{wclltoy} it follows that
\begin{align*}
\lim_{t \rightarrow -\infty }\sigma_{\Phi^{-1}(\hat{y})}(t) &=  \lim_{t \rightarrow -\infty } - \int^{t_0}_{t} \frac{\sqrt{w(\tau)}}{\sqrt{1-\eta(\tau)^2 w(\tau)}} d\tau , \\
& = \lim_{t \rightarrow -\infty } - \int^{t_0}_{t} \frac{ \sqrt{c} {\rm e}^{\tau}}{\sqrt{1- c \eta(\tau)^2 {\rm e}^{\tau} }} d\tau .
\end{align*}
Consequently, there exists a constant $s_0$ such that
\begin{align*}
\lim_{t \rightarrow -\infty }\sigma_{\Phi^{-1}(\hat{y})}(t) =s_0 . 
\end{align*}
There exists a $t_1$ such that $ \frac{d \sigma_{\Phi^{-1}(\hat{y})}}{dt}(t) >0$ for all $t \in (-\infty, t_1)$. Consequently, we get that
\begin{align}
\lim_{s \rightarrow s_0} \sigma^{-1}_{\hat{y}}(s) = -\infty . \label{sigmas00}
\end{align} 
We define $y:= \Phi^{-1}(\hat{y} \circ \sigma^{-1}_{\hat{y}})$. We will show that $y(s-s_0)$ is a toy tip solution as defined by Definition \ref{def:toytip}. From Claim II and \eqref{sigmas00} it follows that $y(s-s_0)$ satisfies S1. If follows from S$'$2 that $y(s-s_0)$ satisfies S2. Let $(\rho,r):= y$. It follows from the domain of $\Phi$ that $\rho >0$. Using S$'$1 we get 
\begin{align*}
\lim_{s \rightarrow s_0}  \frac{r(s) \rho'(s)}{1-\rho(s)^2} &= \lim_{s \rightarrow s_0}  \frac{3}{2}  \left( -  1+  \frac{ \sqrt{1-\rho^2}  (   \beta  {r}^2 g(  {r}^2) + \rho    )   }{{r}}    \right) =  -1.
\end{align*}
Hence, by the toy model \eqref{toygov} we obtain that there exists a $s_1$ such that  
\begin{align*}
\rho(s)>0, \;\; \rho'(s)<0 , \; \;\; \forall s \in (s_0,s_1). 
\end{align*}
Consequently, $y(s-s_0)$ satisfies \eqref{localtoytipineq}.
\end{proof}


\subsection{The invariant manifold}

We will show the existence of an invariant manifold which corresponds to toy $t$-tip solutions as specified by Definition \ref{def:toyttip}. We consider the equilibrium $q_0:=(1/3,0)$ of the $({\eta},w)$-ODE \eqref{etartau}. The eigenvalues corresponding $q_0$ are given by
\begin{align}
 \lambda_1 =  - \frac{1}{2}, \qquad \lambda_2 =  2. \label{eigiii}
\end{align}
Consequently, $q_0$ has a 1-dimensional stable manifold and a 1-dimensional unstable manifold.  It follows from S$'$1 of Definition \ref{def:toyttip} and Lemma \ref{lem:toyequiv} that toy tip solutions correspond to the unstable manifold. Denote the unstable manifold by $W_\beta^u(q_0)$ and the corresponding unstable subspace by $E^{u}_\beta$. We have that
\begin{align}
E^{u}_\beta = {\rm span}  \Big(   \frac{1}{18}- \beta  g(0),15)^\top \Big) . \label{unstabsubspace}
\end{align}

To prove the Corollary \ref{cor:toytip} we will need the following:

\begin{lem}
\label{lem:WaGs}
There exists an interval  $( a_\beta,b_\beta)$ with $0 \in ( a_\beta,b_\beta)$ and a unique $G_\beta \in C^{\omega}((a_\beta,b_\beta), \mathbb{R})$ such that  
\begin{align}
W_\beta^u(q_0) =   \{ (G_\beta(w) , w) \in N_1 \; : \; w \in (a_\beta,b_\beta)        \},   \label{WaGs}
\end{align}
$\beta \mapsto G_\beta$ is smooth, $\beta \mapsto a_{\beta}$ is lower semi-continuous and  $\beta \mapsto b_{\beta}$ is upper semi-continuous. 
\end{lem}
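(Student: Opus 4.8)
The plan is to realize $W_\beta^u(q_0)$ as a graph over the $w$-coordinate by exploiting the hyperbolicity of $q_0 = (1/3,0)$ together with the analyticity of the vector field in \eqref{etartau}. Since the eigenvalues \eqref{eigiii} are $\lambda_1 = -1/2$ and $\lambda_2 = 2$ with eigenvector for $\lambda_2$ having nonzero $w$-component (indeed the second component of the vector in \eqref{unstabsubspace} is $15 \neq 0$), the one-dimensional unstable manifold is transverse to the line $\{w = 0\}$ at $q_0$; hence near $q_0$ it is the graph of a function $w \mapsto \eta = G_\beta(w)$. Because the right-hand side of \eqref{etartau} is real-analytic on $N_1$ and $q_0$ is a hyperbolic equilibrium, the analytic unstable manifold theorem (e.g.\ the analytic version of the (un)stable manifold theorem, cf.\ \cite{AR92}) gives that $G_\beta \in C^\omega$ on some maximal interval $(a_\beta, b_\beta) \ni 0$, and uniqueness of the unstable manifold gives uniqueness of $G_\beta$. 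I would define $(a_\beta,b_\beta)$ as the maximal open interval over which the unstable manifold remains a graph over $w$ inside $N_1$.

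Next I would address the joint dependence on $\beta$. Since $\beta$ enters \eqref{etartau} analytically (in fact polynomially, multiplying the term $\tfrac32 \eta^2 w g(w)$), the extended system obtained by appending the trivial equation $\beta' = 0$ has a hyperbolic equilibrium $(1/3, 0, \beta)$ for each $\beta$, depending analytically on the augmented phase variable. Applying the unstable manifold theorem to this extended system, or equivalently invoking smooth dependence of invariant manifolds on parameters, yields that $(w,\beta) \mapsto G_\beta(w)$ is smooth (indeed analytic where defined); in particular $\beta \mapsto G_\beta$ is smooth in the sense of smooth dependence of the local graph on $\beta$. This also handles $\beta = 0$, for which the equilibrium and its hyperbolicity persist.

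For the semicontinuity of the endpoints: $b_\beta$ is the supremum of $w > 0$ such that the unstable manifold is a graph over $[0,w]$ and stays in $N_1$; lower/upper semicontinuity should follow from the fact that the obstruction to extending the graph is either (i) the trajectory exiting $N_1$ (i.e.\ approaching $\{\eta^2 w = 1\}$) or (ii) the manifold developing a vertical tangent (i.e.\ $dw/dt = 2w$ staying positive but $\eta$-dynamics forcing non-graph behavior — actually since $dw/dt = 2w > 0$ for $w>0$, the manifold is automatically a graph over $w$ as long as it stays in $\{w>0\}$, so the only obstruction on the $b_\beta$ side is exiting $N_1$). For the $a_\beta$ side, $w$ can become negative along the unstable manifold, and there $dw/dt = 2w < 0$, so again $w$ is strictly monotone in $t$ and the graph property persists until the trajectory leaves $N_1$. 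Thus $(a_\beta,b_\beta)$ is exactly the $w$-range swept by $W_\beta^u(q_0) \cap N_1$. The semicontinuity statements then follow from continuous dependence of solutions on initial conditions and parameters together with the openness of $N_1$: a point in $N_1$ reached for parameter $\beta$ is still reached for nearby $\beta'$, giving $\limsup_{\beta'\to\beta} a_{\beta'} \le a_\beta$ and $\liminf_{\beta'\to\beta} b_{\beta'} \ge b_\beta$, which is the lower semicontinuity of $\beta\mapsto a_\beta$ and upper semicontinuity of $\beta\mapsto b_\beta$ as claimed.

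The main obstacle I expect is the careful bookkeeping showing that the $w$-range of the unstable manifold is genuinely an interval and that the graph description is global on that interval rather than merely local near $q_0$; this requires using the sign of $dw/dt$ to propagate the graph property, and checking that nothing worse than exiting $N_1$ can terminate the manifold (in particular ruling out the manifold accumulating on the other equilibrium $(0,0)$ or spiraling, which is easy here since $dw/dt$ has a fixed sign on each side of $w=0$). The analyticity and smooth parameter-dependence, by contrast, are standard once the setup is in place.
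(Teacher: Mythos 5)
Your proposal follows essentially the same route as the paper's proof: the analytic unstable manifold theorem applied at the hyperbolic equilibrium $q_0=(1/3,0)$, transversality of $E^u_\beta$ to $\{w=0\}$ to get the graph representation, the suspended system $\beta'=0$ for smooth dependence on $\beta$, and continuous dependence on parameters/initial data for the semicontinuity of the endpoints. A difference worth noting in your favor: the paper's proof is terse on why the graph description is \emph{global} on $(a_\beta,b_\beta)$ rather than merely local near $q_0$; you supply the missing detail by observing that $dw/dt = 2w$ has a fixed sign on each branch, so $w$ is strictly monotone along the manifold and the only termination mechanism is exiting $N_1$. The paper instead leans on Lemma~\ref{lem:mequiv} and condition S$'$2, which on a literal reading only address the branch in $N_0$ near the equilibrium. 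Your version of that step is more complete.

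One point you should fix, however. You correctly derive
$\limsup_{\beta'\to\beta} a_{\beta'} \le a_\beta$ and $\liminf_{\beta'\to\beta} b_{\beta'} \ge b_\beta$
from the fact that maximal existence intervals can only shrink upper-semicontinuously under perturbation of parameters. But these inequalities are, by the standard definitions, \emph{upper} semicontinuity of $\beta\mapsto a_\beta$ and \emph{lower} semicontinuity of $\beta\mapsto b_\beta$ — not the other way around as you write (and as the lemma statement says). Your substantive inequalities are the right ones for the downstream use in Corollary~\ref{cor:toytip} (a $w$-value in the open domain at $\beta$ remains in the domain for nearby $\beta'$), so the content of your argument is correct; only the labels are swapped, apparently inherited from the wording of the lemma itself. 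When writing this up you should state the inequalities and call them by their correct names.
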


\begin{proof}
The existence  $G_\beta \in C^{\omega}((a_\beta,b_\beta), \mathbb{R})$ in \eqref{WaGs} follows from Lemma \ref{lem:mequiv} and S$'$2 in Definition \ref{def:toyttip}. We extend the toy model \eqref{toygov} by adding the equation:
\begin{align*}
\beta ' =  0. 
\end{align*}
Then, the maximal center unstable manifold corresponding to the equilibrium $(q_0,\beta)$ is unique since the toy model \eqref{toygov} has a unique unstable manifold. Since the center unstable manifold is unique it is $C^k$ for all $k \geq 0$ smooth which implies that $\beta \mapsto G_\beta$ is smooth. The lower semi-continuity of $\beta \mapsto a_{\beta}$   and the upper semi-continuity of $\beta \mapsto b_{\beta}$ follow from standard ODE theory, for example see Theorem 6.2 in \cite{SID13}.
\end{proof}

Toy $t$-tip solutions are contained in $N_0 \subset N_1$ and  $W_\beta^u(q_0) \subset N_1$.  Consequently, we consider $W_{\beta}^u(q_0)  \cap N_0$. 

\begin{lem}
\label{connectednone}
$W_{\beta}^u(q_0)  \cap N_0$ is a non-empty connected set.
\end{lem}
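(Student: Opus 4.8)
The plan is to read the conclusion off directly from the graph representation of the unstable manifold furnished by Lemma \ref{lem:WaGs}, so that essentially no new dynamical analysis is required. By that lemma we may write $W_\beta^u(q_0) = \{(G_\beta(w), w) : w \in (a_\beta,b_\beta)\}$ with $G_\beta$ analytic on an open interval $(a_\beta,b_\beta)$ containing $0$. In particular $W_\beta^u(q_0)$ is the image of $(a_\beta,b_\beta)$ under the continuous injection $\iota_\beta : w \mapsto (G_\beta(w), w)$, and since $0$ is an interior point of $(a_\beta,b_\beta)$ we have $a_\beta < 0 < b_\beta$, hence $b_\beta > 0$.

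Next I would record the elementary set identity $N_0 = N_1 \cap \{\, w > 0 \,\}$, which is immediate from the definitions (the two sets differ only in whether $w$ is required to be positive, the conditions $\eta>0$ and $\eta^2 w < 1$ being common to both). Since the $({\eta},w)$-ODE \eqref{etartau} is posed on $N_1$, we have $W_\beta^u(q_0) \subset N_1$, so intersecting the manifold with $N_0$ merely restricts the graph to $w > 0$:
\begin{align*}
W_\beta^u(q_0) \cap N_0 = \{\, (G_\beta(w), w) : w \in (a_\beta,b_\beta),\ w > 0 \,\} = \iota_\beta\big( (0,b_\beta) \big),
\end{align*}
where the last equality uses $a_\beta < 0 < b_\beta$.

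From this description both assertions are immediate: non-emptiness holds because $(0,b_\beta) \neq \emptyset$ (indeed any $0 < w_0 < b_\beta$ yields a point $(G_\beta(w_0), w_0)$ of the intersection), and connectedness holds because $(0,b_\beta)$ is an interval, hence connected, and $\iota_\beta$ is continuous, so its image is connected. I do not expect any genuine obstacle here; the only points that require care are (i) correctly matching the defining inequalities of $N_0$ and $N_1$ so that the extra constraint is exactly $w>0$, and (ii) using the fact — already guaranteed by Lemma \ref{lem:WaGs} — that the entire unstable manifold is a single graph over a $w$-interval, so that restricting to $w>0$ produces one connected subinterval rather than several pieces that could disconnect $W_\beta^u(q_0) \cap N_0$.
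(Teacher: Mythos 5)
Your proof is correct, and it works by a genuinely different (and lighter) route than the paper's. The paper does not invoke Lemma \ref{lem:WaGs} here; instead it argues directly from the dynamics: it uses the explicit eigenvector computation \eqref{unstabsubspace} to see that a spanning vector $\mathbf{v}$ of $E^{u}_\beta$ has non-zero $w$-component, hence $\mathbf{v}\notin T_{q_0}\partial N_0$, so the unstable manifold crosses transversally into $\{w>0\}$, giving non-emptiness; it then notes that $\{w=0\}$ is invariant (so $W_\beta^u(q_0)\setminus\{q_0\}$ never meets $\{w=0\}$), and since removing $q_0$ from the one-dimensional manifold $W_\beta^u(q_0)$ yields at most two branches, the piece in $\{w>0\}$ is a single branch and therefore connected. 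Your proof instead takes Lemma \ref{lem:WaGs} as a black box: the unstable manifold is globally a graph $w\mapsto(G_\beta(w),w)$ over an interval $(a_\beta,b_\beta)\ni 0$, so intersecting with $N_0=N_1\cap\{w>0\}$ simply restricts the parameter to $(0,b_\beta)$, and both assertions drop out. The two arguments rest on the same dynamical input (the unstable eigenvector is transverse to $\{w=0\}$ — that fact is what powers the graph representation inside Lemma \ref{lem:WaGs}), so your version is essentially a repackaging that trades a short transversality/invariance argument for a citation of the stronger graph lemma. This is legitimate: Lemma \ref{lem:WaGs} precedes Lemma \ref{connectednone} in the paper and its proof does not rely on it, so no circularity arises. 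The one small point worth making explicit in your write-up, as you partially note, is that for every $w\in(a_\beta,b_\beta)$ the point $(G_\beta(w),w)$ already lies in $N_1$ (this is implicit in the graph description of $W_\beta^u(q_0)$), so that intersecting with $N_0$ imposes only the extra constraint $w>0$ and nothing more.
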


\begin{proof}
Let  $ \mathbf{v} \in E^{u}_\beta$ be a non-zero vector. Then $\mathbf{v} \notin  T_{q_0} \partial{N}_0$ and $W_{\beta}^u(q_0)  \cap N_0 \neq \emptyset$.  Observe that $W_{\beta}^u(q_0) \setminus \{q_0 \}  $ does not intersect the curve $w=0$ since $dw/dt \neq 0$ if $w \neq 0$.  Then, since 
\begin{align*}
N_0    = \{  (\eta,w)  \in N_1 \; : \; w>0    \}, 
\end{align*}
it follows that $W_{\beta}^u(q_0)  \cap N_0$ is connected.
\end{proof}

The next lemma states the relation between $W_{\beta}^u(q_0)  \cap N_0$ and toy $t$-tip solutions:

\begin{lem} A solution $(\eta,w) \in C^{\infty}((-\infty,t_0),N_0)$ of the ODE \eqref{etartau} with initial conditions in $W_{\beta}^u(q_0)  \cap N_0$ is a toy $t$-tip solution (Definition \ref{def:toyttip}). 
\label{lem:mequiv}
\end{lem}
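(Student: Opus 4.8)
The goal is to show that any solution $(\eta,w)$ of \eqref{etartau} defined on a left-infinite interval $(-\infty,t_0)$, taking values in $N_0$, and whose initial condition lies on $W_\beta^u(q_0) \cap N_0$, satisfies both conditions $S'1$ and $S'2$ of Definition \ref{def:toyttip}. The plan is to exploit what we already know about the unstable manifold from Lemma \ref{lem:WaGs} and Lemma \ref{connectednone}, together with the hyperbolic structure of the equilibrium $q_0$.

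First I would establish $S'1$. Since the initial condition lies on $W_\beta^u(q_0)$, the whole backward orbit through that point remains on $W_\beta^u(q_0)$ by invariance, and by Lemma \ref{connectednone} it remains in $N_0$ as long as $w>0$; since $dw/dt = 2w > 0$ on $N_0$, the function $w$ is strictly increasing, so moving backward in $t$ keeps $w$ positive and decreasing toward its infimum. The key point is that on the one-dimensional unstable manifold of the hyperbolic equilibrium $q_0 = (1/3,0)$, every trajectory in $W_\beta^u(q_0)$ either equals $q_0$ or converges to $q_0$ as $t \to -\infty$ (this is the standard characterization of the unstable manifold, or can be read directly from the graph representation $\eta = G_\beta(w)$ with $w(t) = c\,\mathrm{e}^{2t} \to 0$, so $\eta(t) \to G_\beta(0) = 1/3$). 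A toy $t$-tip solution must have $w$ not identically zero, which holds because the solution takes values in $N_0$ where $w>0$; hence $w(t) = c\,\mathrm{e}^{2t}$ with $c>0$ and $\lim_{t\to-\infty} w(t) = 0$, $\lim_{t\to-\infty}\eta(t) = 1/3$. This gives $S'1$.

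Next I would establish $S'2$. By Lemma \ref{lem:WaGs} there is a unique analytic $G_\beta \in C^\omega((a_\beta,b_\beta),\mathbb{R})$ with $0 \in (a_\beta,b_\beta)$ whose graph is $W_\beta^u(q_0)$. Since the solution's orbit lies in $W_\beta^u(q_0)$, we have $\eta(t) = G_\beta(w(t))$ for all $t$ in the domain; combined with $w(t) \to 0$ as $t \to -\infty$ and $w$ continuous and strictly increasing, for $t$ sufficiently negative $w(t)$ lies in any prescribed neighborhood of $0$ inside $(a_\beta, b_\beta)$. Choosing $t_0' \le t_0$ so that $w(t) < b_\beta$ for all $t < t_0'$ and setting $a = w(t_0')$ (or a suitable value $a \le b_\beta$ with $a$ in the analyticity interval), the representation $\eta(t) = G_\beta(w(t))$ with $G_\beta$ analytic on $(-a,a)$ — after possibly shrinking — gives exactly condition $S'2$. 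The only care needed is matching the interval notation $(-a,a)$ in Definition \ref{def:toyttip} with the actual analyticity interval $(a_\beta,b_\beta)$ of $G_\beta$; since $a_\beta < 0 < b_\beta$ and $w(t)\to 0^+$, for negative enough $t$ the values $w(t)$ sit in $(0, \min(b_\beta, -a_\beta))$, which is contained in a symmetric interval about $0$ on which $G_\beta$ is analytic, so the required analytic germ exists.

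The main obstacle I anticipate is not conceptual but bookkeeping: carefully justifying that the trajectory stays on the graph $W_\beta^u(q_0)$ for \emph{all} backward times (not just near $q_0$) — this follows from invariance of the unstable manifold together with the fact, noted in the proof of Lemma \ref{connectednone}, that $W_\beta^u(q_0)\setminus\{q_0\}$ never crosses $w=0$, so a trajectory starting in $N_0 \cap W_\beta^u(q_0)$ cannot leave $W_\beta^u(q_0)$ backward in time before reaching $q_0$ — and reconciling the interval conventions so that $S'2$ is literally satisfied. Everything else is a direct application of Lemmas \ref{lem:WaGs} and \ref{connectednone} and the explicit solution $w(t)=c\,\mathrm{e}^{2t}$ of the decoupled $w$-equation.
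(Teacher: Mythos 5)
Your argument for condition $S'1$ is sound: backward invariance of the unstable manifold, the explicit solution $w(t)=c\,{\rm e}^{2t}$ of the decoupled $w$-equation, and the standard fact that trajectories on the unstable manifold of a hyperbolic equilibrium converge to it as $t\to-\infty$ together give $\lim_{t\to-\infty}(\eta(t),w(t))=q_0=(1/3,0)$. The gap is in $S'2$: you derive the analytic graph representation $\eta(t)=G_\beta(w(t))$ by citing Lemma~\ref{lem:WaGs}, but in the paper the proof of Lemma~\ref{lem:WaGs} obtains the analyticity of $G_\beta$ \emph{from} the present lemma (Lemma~\ref{lem:mequiv}) together with condition $S'2$. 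Using Lemma~\ref{lem:WaGs} here therefore makes the logical dependency circular: you cannot appeal to that lemma for the very analyticity fact that it, in turn, extracts from this one.

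The non-circular route — which is what the paper actually does — is to bypass Lemma~\ref{lem:WaGs} entirely and invoke the analytic version of the unstable manifold theorem directly. The vector field of \eqref{etartau} is real-analytic on $N_1$, so $W_\beta^u(q_0)$ is a real-analytic curve through $q_0$; and by \eqref{unstabsubspace} the spanning vector of $E^u_\beta$ has nonzero $w$-component, so near $q_0$ this curve is the graph of an analytic function of $w$ on a neighbourhood of $0$. Pulling this back along the orbit gives the required $G\in C^\omega((-a,a),\mathbb{R})$ with $\eta(t)=G(w(t))$ for $t$ sufficiently negative, which is exactly $S'2$. Your interval bookkeeping at the end is fine once the analyticity is obtained this way; the only repair needed is to replace the appeal to Lemma~\ref{lem:WaGs} with a direct appeal to the analytic unstable manifold theorem plus the transversality of $E^u_\beta$ to the $\eta$-axis.
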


\begin{proof}
Let $(\eta,w)$ be the solution given in the lemma. Then it satisfies S$'$1 from Definition \ref{def:toyttip}. It remains to prove S$'$2.  The spanning vector of $E^{u}_\beta$ has non-zero $w$-component. Applying the analytic version of the unstable manifold theorem gives S$'$2.
\end{proof}

We return to the $(\rho,r)$-variables as the $(\eta,w)$-variables are not suitable for the proofs of the lemmas for the main theorems, Theorem \ref{theo:maintoy} and Theorem \ref{theo:toptoy} in Section \ref{sec:toymaintheo}. Consequently, we need to express $W_{\beta}^u(q_0)  \cap N_0$ in terms of the $(\rho,r)$-variables. We define 
\begin{align}
W^{\rm loc}_{ \beta}  : =  \Phi^{-1}( W_{\beta}^u(q_0)  \cap N_0)) . \label{Wlocalpha}
\end{align}
Observe that $W^{\rm loc}_\beta   \subset  M_1 $.  It follows by \eqref{defphit} and Lemma \ref{connectednone} that $W^{\rm loc}_{ \beta} $ is a non-empty connected set. Denote the flow corresponding to \eqref{toygov} by $\phi_\beta$. We can use the flow $\phi_\beta$ to extend  $W^{\rm loc}_\beta$ to $M_0$:   
\begin{align}
{W}^{\rm tip}_{ \beta}:= \{ y_0 \in M_0 \; : \; \exists s_0 \in \mathbb{R} \; {\rm s.t.} \; \phi_\beta(y_0;s_0) \in W^{\rm loc}_\beta  \}. \label{wtipatoy}
\end{align}
Observe that $W^{\rm tip}_\beta$ is the global version of $W^{\rm loc}_\beta$.


\subsection{$W^{\rm tip}_\beta$ are toy tip solutions}
\label{p:cor:toytip}

Toy tip solutions can be constructed using $W^{\rm tip}_\beta$:

\begin{theo}[Construction toy tip solutions] Let $y$ be a solution of the toy model \eqref{toygov} for $\beta \in \mathbb{R}$. Then the following two statements are equivalent:
\begin{itemize}
\item[1.] $y$ is a toy tip solution (Definition \ref{def:toytip}) up to a translation of the independent variable;
\item[2.] ${\rm range}\left(y \right) = {W}^{\rm tip}_{ \beta}$.
\end{itemize}  \label{theo:toytiptheorem}
\end{theo}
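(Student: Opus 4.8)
The plan is to establish the equivalence by transporting the analogous equivalence for toy $t$-tip solutions (which is essentially built into Lemmas \ref{lem:toyequiv}, \ref{connectednone}, \ref{lem:mequiv}) back to the $(\rho,r)$-variables via the two changes of variable $\tau_y$ and $\Phi$, and then globalizing with the flow $\phi_\beta$. The key observation is that $W^{\rm tip}_\beta$ was defined in \eqref{wtipatoy} precisely as the $\phi_\beta$-saturation of $W^{\rm loc}_\beta = \Phi^{-1}(W^u_\beta(q_0)\cap N_0)$, so $W^{\rm tip}_\beta$ is (the image of) a single trajectory of \eqref{toygov}; this is the content I will need to make precise first, using that $W^u_\beta(q_0)\cap N_0$ is a non-empty connected subset of the $1$-dimensional manifold $W^u_\beta(q_0)$ by Lemma \ref{connectednone}, hence a single (open) orbit arc of \eqref{etartau}, which pulls back under $\Phi$ and then under the time-reparametrization to a single orbit arc of \eqref{toygov}, whose $\phi_\beta$-saturation is one full trajectory.

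For the implication $1 \Rightarrow 2$: let $y=(\rho,r)$ be a toy tip solution. Apply part 1 of Lemma \ref{lem:toyequiv} to get that $\hat y := \Phi(y\circ\tau_y^{-1})$ is a toy $t$-tip solution, hence by Definition \ref{def:toyttip} it satisfies $S'1$, so $\hat y(t)\to q_0$ as $t\to-\infty$ and therefore $\mathrm{range}(\hat y)\subset W^u_\beta(q_0)$; since $\hat y$ takes values in $N_0$, in fact $\mathrm{range}(\hat y)\subset W^u_\beta(q_0)\cap N_0$. Conversely, since $W^u_\beta(q_0)\cap N_0$ is a single orbit arc and $\hat y$ is a solution accumulating on $q_0$, backward-uniqueness of the unstable-manifold orbit forces $\mathrm{range}(\hat y) = W^u_\beta(q_0)\cap N_0$ (the orbit of $\hat y$ cannot be a proper sub-arc if it limits on $q_0$, because the portion of $W^u_\beta(q_0)\cap N_0$ near $q_0$ is itself a single orbit and $\hat y$ already realizes it). Undoing $\Phi$ gives $\mathrm{range}(y\circ\tau_y^{-1}) = W^{\rm loc}_\beta$, and since $\tau_y^{-1}$ is a reparametrization of the independent variable, $\mathrm{range}(y)$ is contained in the $\phi_\beta$-saturation of $W^{\rm loc}_\beta$, i.e. $\mathrm{range}(y)\subseteq W^{\rm tip}_\beta$; equality follows because both sides are full trajectories of \eqref{toygov} and they overlap.

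For the implication $2 \Rightarrow 1$: suppose $\mathrm{range}(y) = W^{\rm tip}_\beta$. By definition of $W^{\rm tip}_\beta$, the trajectory $y$ enters $W^{\rm loc}_\beta\subset M_1$; restricting $y$ to the time interval where it lies in $M_1$ and applying $\Phi$, then reparametrizing via $\tau_y$, produces a solution of \eqref{etartau} with range contained in $W^u_\beta(q_0)\cap N_0$, which is a solution accumulating on $q_0$, hence lies in $N_0\subset N_1$ and has initial condition in $W^u_\beta(q_0)\cap N_0$. By Lemma \ref{lem:mequiv} this is a toy $t$-tip solution. Now apply part 2 of Lemma \ref{lem:toyequiv}: $\Phi^{-1}(\hat y\circ\sigma_{\hat y}^{-1})$ is a toy tip solution up to translation of the independent variable. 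Finally identify $\Phi^{-1}(\hat y\circ\sigma_{\hat y}^{-1})$ with a time-translate of $y$ itself — this holds because $\sigma_{\hat y}$ and $\tau_y$ are mutually inverse reparametrizations (up to an additive constant) when applied to corresponding solutions, a fact that should be checked directly from \eqref{tautaueq2} and \eqref{toysigma}. Hence $y$ is a toy tip solution up to translation.

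I expect the main obstacle to be the bookkeeping in the $2\Rightarrow 1$ direction: one must be careful that the reparametrizations $\tau_y$ and $\sigma_{\hat y}$ genuinely compose to the identity up to an additive constant on the relevant solution, and that the "up to translation" caveats from Lemma \ref{lem:toyequiv}(2) are compatible with the statement "${\rm range}(y) = W^{\rm tip}_\beta$" — since ranges are translation-invariant, this is morally automatic, but it needs to be said cleanly. A secondary subtlety is ensuring the orbit arc $W^u_\beta(q_0)\cap N_0$ really is a single orbit (not several), which I will get from connectedness (Lemma \ref{connectednone}) together with the fact that $dw/dt = 2w \neq 0$ on $N_0$ so the flow has no equilibria there and crosses each level set $w = \text{const}$ transversally, making the arc a graph over $w$ and hence a single trajectory.
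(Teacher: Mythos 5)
Your proposal is correct and takes essentially the same route as the paper's own proof: both directions are reduced to the $t$-tip picture via $\tau_y$, $\sigma_{\hat y}$ and $\Phi$, invoking Lemma \ref{lem:toyequiv} and Lemma \ref{lem:mequiv}, with the one-dimensionality of $W^u_\beta(q_0)$ (plus Lemma \ref{connectednone}) forcing range equality. The paper's version is much terser; you have simply spelled out the range and reparametrization bookkeeping that the paper leaves implicit.
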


\begin{proof}
Let $y$ be a solution of the toy model \eqref{toygov} for $\beta \in \mathbb{R}$. 
$1 \Rightarrow 2$: By Lemma \ref{lem:toyequiv} it follows that $\hat{y} := \Phi(y \circ \tau_{y}^{-1})$ is a $t$-tip solution as defined by Definition \ref{def:toyttip}. Since $\hat{y}$ satisfies S$'$1 in Definition \ref{def:toyttip} and since $q_0$ has a 1-dimensional unstable manifold it follows that  ${\rm range}\left(y \right) = {W}^{\rm tip}_{ \beta}$.
$2 \Rightarrow 1$: Let $\hat{y} := \Phi(y \circ \tau_{y}^{-1})$. It follows from Lemma \ref{lem:mequiv} that $\hat{y}$ is a toy $t$-tip solution. Then using Lemma \ref{lem:toyequiv} we obtain that $y$ is a toy tip solution up to a translation in the independent variable.
\end{proof}

\begin{proof}[Proof of Corollary \ref{cor:toytip}]
The uniqueness of toy tip solutions (Definition \ref{def:toytip}) follows from Theorem \ref{theo:toytiptheorem}. It remains to prove that $F_{\rm toy}: \beta \mapsto (\rho_\beta,r_\beta)$ is continuous in $\beta$. 
We return to the $(\eta,w)$-ODE \eqref{etartau}. Denote the toy $t$-tip solution corresponding to $\beta$ by $(\eta_\beta,w_\beta)$. Then, it follows Lemma \ref{lem:WaGs} and the $w$-equation in \eqref{etartau}  that $\beta \mapsto (\eta_\beta,w_\beta)$ is continuous in $\beta$. Then, transforming the dependent variable to $\rho,r$ using \eqref{defphit} and the independent variable to $s$ using \eqref{toysigma} it follows that $F_{\rm toy}: \beta \mapsto (\rho_\beta,r_\beta)$ is continuous in $\beta$. 
\end{proof}

\begin{cor} The function $\varrho$ defined in \eqref{varrho} is a smooth function satisfying 
\begin{align}
\lim_{r \rightarrow 0 } D \varrho (r,\beta) = \mathbf{0}.  \label{limitrhonul}
\end{align}
\label{cor:varrhosmooth}
\end{cor}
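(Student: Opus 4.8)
The plan is to transport everything to the $(\eta,w)$-coordinates, where the relevant object is the unstable manifold $W_\beta^u(q_0)$, and to read off both the smoothness and the limiting derivative from Lemma \ref{lem:WaGs}. First I would recall from \eqref{varrho} that $\varrho(\cdot,\beta) = \rho_\beta|_{M_1} \circ (r_\beta|_{M_1})^{-1}$, i.e.\ $\varrho(r,\beta)$ is the value of $\rho$ on the tip solution at the point where the radius equals $r$. Passing through the diffeomorphism $\Phi$ of \eqref{defphit}, which sends $(\hat\rho,\hat r)$ to $(\eta,w)=(\sqrt{1-\hat\rho^2}/\hat r, \hat r^2)$, the tip solution's trace in $M_1$ is carried to $W_\beta^u(q_0)\cap N_0 = \{(G_\beta(w),w) : w\in(0,b_\beta)\}$ by Lemma \ref{lem:WaGs} and the definition \eqref{Wlocalpha} of $W^{\rm loc}_\beta$ (note $a_\beta<0\le$ the relevant range, so on $N_0$ we use $w>0$). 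Inverting $\Phi$ on the branch $\hat\rho>0$ gives $\hat r = \sqrt{w}$ and $\hat\rho = \sqrt{1 - w\,G_\beta(w)^2}$. Hence, with $r=\sqrt{w}$, i.e.\ $w=r^2$,
\begin{align*}
\varrho(r,\beta) = \sqrt{1 - r^2\, G_\beta(r^2)^2}.
\end{align*}

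Smoothness in $(r,\beta)$ then follows immediately: $G_\beta$ is analytic in $w$ and, by Lemma \ref{lem:WaGs}, $\beta\mapsto G_\beta$ is smooth, so $(w,\beta)\mapsto 1 - w\,G_\beta(w)^2$ is smooth and strictly positive on the relevant domain (it equals $\hat\rho^2>0$ on $M_1$), so its square root is smooth; composing with the smooth map $r\mapsto r^2$ preserves smoothness. This proves $\varrho$ is a smooth function, which also discharges the smoothness claim in Lemma \ref{lem:varrho}.

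For the limit \eqref{limitrhonul} I would differentiate the closed-form expression. Write $H(w,\beta) := 1 - w\,G_\beta(w)^2$, so $\varrho(r,\beta) = \sqrt{H(r^2,\beta)}$ with $H(0,\beta)=1$. By the chain rule,
\begin{align*}
\frac{\partial \varrho}{\partial r}(r,\beta) = \frac{2r\, H_w(r^2,\beta)}{2\sqrt{H(r^2,\beta)}}, \qquad
\frac{\partial \varrho}{\partial \beta}(r,\beta) = \frac{H_\beta(r^2,\beta)}{2\sqrt{H(r^2,\beta)}},
\end{align*}
where $H_w = -G_\beta^2 - 2w G_\beta (G_\beta)_w$ and $H_\beta = -w\,\partial_\beta(G_\beta^2)$ are bounded near $w=0$ (by analyticity of $G_\beta$ and smoothness of $\beta\mapsto G_\beta$), and $\sqrt{H}\to 1$. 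The factor $2r$ in the first component and the factor $w=r^2$ inside $H_\beta$ in the second both vanish as $r\to 0$, so each component of $D\varrho(r,\beta)$ tends to $0$; that is, $\lim_{r\to 0} D\varrho(r,\beta) = \mathbf 0$.

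The only genuine content beyond bookkeeping is justifying the closed form $\varrho(r,\beta)=\sqrt{1-r^2 G_\beta(r^2)^2}$ — i.e.\ checking that $\Phi^{-1}$ restricted to $W_\beta^u(q_0)\cap N_0$ really parametrizes the trace of $(\rho_\beta,r_\beta)|_{M_1}$ with $r$ as parameter — and this is exactly the equivalence supplied by Theorem \ref{theo:toytiptheorem} together with the monotonicity of $r_\beta$ noted before \eqref{varrho}. I expect this identification step to be the main (though mild) obstacle; once it is in place the rest is the elementary differentiation above.
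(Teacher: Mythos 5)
Your proof is correct and takes the same route as the paper: pass to $(\eta,w)$-coordinates via $\Phi$, read the unstable-manifold graph $G_\beta$ from Lemma \ref{lem:WaGs}, and use Theorem \ref{theo:toytiptheorem} to identify it with the trace of $(\rho_\beta,r_\beta)|_{M_1}$. Your explicit formula $\varrho(r,\beta)=\sqrt{1-r^2\,G_\beta(r^2)^2}$ is a clean way of unpacking the paper's terse appeal to the analyticity condition S2, and it makes both the smoothness and the vanishing of $D\varrho$ as $r\to 0$ (the $r$-derivative via the chain-rule factor $2r$, the $\beta$-derivative via the explicit prefactor $w=r^2$) immediate.
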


\begin{proof}
The smoothness of $\varrho$ follows by changing ${W}^{\rm tip}_{ \beta}$ to the $(\rho,r)$-variables using \eqref{defphit},  the properties obtained in   Lemma \ref{lem:WaGs} and the equivalence in Theorem \ref{theo:toytiptheorem}. The limit \eqref{limitrhonul} then follows from the analyticity condition S2 in Definition \ref{def:toytip}.
\end{proof}


\section{Technical proofs II: Lemmas for Theorems \ref{theo:maintoy} and \ref{theo:toptoy}}
\label{sec:tech}

In this Section we give the proof of Lemmas \ref{lem:X}--\ref{lem:Rb} which were used to prove Theorems \ref{theo:maintoy} and \ref{theo:toptoy}. These lemmas rely on the construction of toy tip solutions as presented in Theorem \ref{theo:toytiptheorem}. As in Section \ref{sec:maintoy} the toy tip solution corresponding to the parameter $\beta$ will be denoted by $(\rho_\beta,r_\beta)$.


\subsection{Proof of Lemma \ref{lem:X}} 
\label{p:lem:X}

Recall the set $X_{\rm toy}$ defined in \eqref{Xtoy}.  Denote by $I^\beta_{\rm max}$ the maximal existence interval of $(\rho_\beta,r_\beta)$. The proof of Lemma \ref{lem:X} consists of two parts. First, it needs to be shown that if $\beta \in X_{\rm toy}$, then $I^\beta_{\rm max}= \mathbb{R}_{>0}$. Next, it needs to be shown that  $(\rho_\beta,r_\beta)$ satisfies conditions S1--S4 in Definition \ref{def:toy}.

Take $\beta \in X_{\rm toy}$.  We first prove that $I^\beta_{\rm max}= \mathbb{R}_{>0}$. We prove this by contradiction.  Hence, suppose that  $I^\beta_{\rm max}=(0,s_0)$ where $s_0>0$.  Then it follows that $\lim_{s \rightarrow s_0} \rho_\beta(s)  = c_0 \in[0,1)$ and $\lim_{s \rightarrow s_0} r_\beta(s)  = c_1 >0$. Then $(c_0,c_1) \in M_0$ which yields a contradiction. Consequently, we have shown that $I^\beta_{\rm max}= \mathbb{R}_{>0}$. 

We continue with showing that $(\rho_\beta,r_\beta)$ is a toy steady tip growth solution. Observe that $(\rho_\beta,r_\beta)$ satisfies S1, S2, and S3. Hence, we only need to prove that it satisfies S4: $\lim_{s \rightarrow \infty }(\rho_\beta,r_\beta)(s) = (0,r_\infty)$ with $r_\infty >0$. If $(\rho_\beta,r_\beta)$ does not satisfy S4 then $\lim_{s \rightarrow \infty }(\rho_\beta,r_\beta)(s)= (c_0, \infty)$ with $c_0 \in [0,1)$.  Then, it follows by the $\rho$-equation that there exists a $s_1$ such that $\rho_\beta'(s) >0 $ for all $s \in (s_1, \infty)$. This contradicts the definition of $X_{\rm toy}$.


\subsection{Proof of Lemma \ref{lem:AB}}
\label{p:lem:AB}

We need to prove the following three claims:
\begin{description}
\item[Claim 1:] $A_{\rm toy}  \cap  B_{\rm toy} = \emptyset $;
\item[Claim 2:] $A_{\rm toy}$ is non-empty and open;
\item[Claim 3:]  $B_{\rm toy}$ is non-empty and open.
\end{description}

\begin{proof}[Proof of Claim 1]
We will use contradiction. Suppose that $\beta \in A_{\rm toy}  \cap  B_{\rm toy} \neq \emptyset$. Then there exists a $s_0 >0$ such that $\rho_\beta(s) \rho_\beta'(s) <0$ for all $s \in (0,s_0)$ and $\rho_\beta'(s_0)=\rho_\beta(s_0) =0$. It then follows that $\lim_{s \rightarrow s_0} r_{\beta}(s_0) = \infty$ but $r_{\beta}$ is bounded since $|r_\beta'| \leq 1$.
\end{proof}

\begin{proof}[Proof of Claim 2]
We define 
\begin{align*}
\tilde{A}_{\rm toy} &:= \{  \beta  \in  \mathbb{R} \; :  \; \exists  s_0 \in \mathbb{R}_{>0}  \; \;  \rho_\beta(s) \rho_\beta'(s) <0 \; \forall s \in (0,s_0), \;  \rho_\beta(s_0) =0       \} .
\end{align*}
We will prove the following two claims:
\begin{itemize}
\item[\;] Claim I: The set $\tilde{A}_{\rm toy}$ is open,
\item[\;] Claim II: $0 \in \tilde{A}_{\rm toy}$.
\end{itemize}
Observe that Claim I and II imply that $A_{\rm toy}$ is a non-empty open set. We first prove Claim I. Let $\beta \in \tilde{A}_{\rm toy}$. Then there exists a $ s_0 \in \mathbb{R}_{>0}$ such that  $ \rho_\beta(s) \rho_\beta'(s) <0$ for all $s \in (0,s_0)$ and such that $ \rho_\beta(s_0) =0$. It follows from Claim I  that $\rho_\beta'(s_0) < 0$. Consequently, there exists a $s_1>s_0$ such that $\rho_\beta(s)<0$ and $\rho_\beta'(s)<0$ for all $s \in (s_0,s_1)$. It follows from Corollary \ref{cor:toytip} that there exists a $\delta>0$ such that $(\beta- \delta, \beta + \delta) \subset \tilde{A}_{\rm toy}$. Hence, $\tilde{A}_{\rm toy}$ is open. 
We will proceed with the proof of Claim II.  We will use contradiction. Suppose that $0 \notin \tilde{A}_{\rm toy}$. Then one of the following cases is true:
\begin{itemize}
\item[] Case I: $ 0 \in X_{\rm toy}$
\item[] Case II: $0 \in B_{\rm toy}$ 
\end{itemize}
If Case I is true then it follows by Lemma \ref{lem:X} that $(\rho_0,r_0)$ is a steady tip growth solution. It follows from the toy model \eqref{toygov} that $(\rho_0,r_0)$ cannot be a steady tip growth solution since the vector field corresponding to $\beta=0$ is non-zero for $\rho=0$. The nullcline corresponding to the $\rho$-equation for $\beta=0$ is given by
\begin{align*}
L_0 := \left\{ (\rho,r) \in M_0 \; : \;   \frac{\rho \sqrt{1-\rho^2}}{r} = 1  \right\}.
\end{align*}

A qualitative picture of the level set $L_0$ is displayed in Figure \ref{fig:toyisonull}. 
    \begin{figure}[t]
\begin{center}
\includegraphics[width=4cm]{./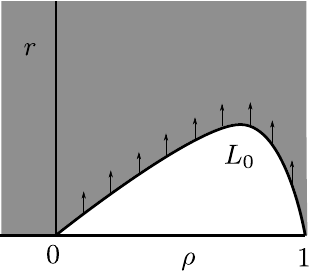}
\caption{The direction of the vector field along the nullcline $L_0$. The grey region corresponds to $\rho'<0$. The white region corresponds to $\rho'<0$.   \label{fig:toyisonull}}\end{center} \end{figure} 
From Figure \ref{fig:toyisonull} it follows that $\rho_0'<0$.    Consequently, Case II does not occur. This completes the contradiction.   
\end{proof}

\begin{proof}[Proof of Claim 3]
We will first assume that $B_{\rm toy}$ is non-empty and prove that $B_{\rm toy}$ is open. Let $\beta \in B_{\rm toy}$. Then there exists a $s_0$ satisfying  $ \rho_\beta(s) \rho_\beta'(s) <0$ for all $s \in (0,s_0)$ and $ \rho'_\beta(s_0) =0$. In addition, we claim that  $ \rho''_\beta(s_0) > 0$. To prove this claim observe that $\rho_\beta'(s)<0$ for all $s \in (0,s_0)$ implies that $ \rho''_\beta(s_0)  \geq  0$. Hence, it remains to show that   $ \rho''_\beta(s_0)  \neq  0$.

We will proceed with a proof by contradiction by assuming that  $ \rho''_\beta(s_0)  =  0$. We compute 
\begin{align*}
\rho_\beta''(s_0) &= \frac{3}{2} \frac{1-\rho_\beta(s_0)^2}{r_\beta(s_0)} \Big( - \frac{\rho_\beta(s_0) (\rho_\beta(s_0) + \beta r_\beta(s_0)^2 g(r_\beta(s_0)^2) \sqrt{1-\rho_\beta(s_0)^2})}{r_\beta(s_0)^2} \\
& \hspace{1cm} + \frac{\beta \rho_\beta(s_0)  }{r_\beta(s_0)}  \frac{\partial r^2 g(r^2)}{\partial r} \Big|_{r=r_\beta(s_0)} \Big)   \\ 
&=  \frac{3}{2} \frac{(1-\rho_\beta(s_0)^2)\rho_\beta(s_0)}{r_\beta(s_0)^2} \left( -1 + \beta \sqrt{1-\rho_\beta(s_0)^2}  \frac{\partial r^2 g(r^2)}{\partial r} \Big|_{r=r_\beta(s_0)}   \right),  \\
\rho'''(s_0) &= \frac{3\beta }{2} \frac{(1-\rho_\beta(s_0)^2)^{3/2}\rho_\beta(s_0)^2}{r_\beta(s_0)^2}  \frac{\partial^2 r^2 g(r^2)}{\partial r^2} \Big|_{r=r_\beta(s_0)} >0.
 \end{align*}
If $\rho_\beta'(s_0) = \rho''_\beta(s_0) =  0$ and $\rho'''(s_0)>0$ then there exists a $s_1<s_0$ such that $\rho'(s)>0$ for all $s \in (s_1,s_0)$ which yields a contradiction. Thus we obtain that $ \rho''_\beta(s_0)  >  0$. Consequently, there exists a $s_2>s_0$ such that  $ \rho'_\beta(s) >0$ , $\rho(s)>0$ for all $s \in (s_0,s_2)$. It follows from Corollary \ref{cor:toytip} that there exists a $\delta>0$ such that $(\beta- \delta, \beta + \delta) \subset B_{\rm toy}$. Hence, $B_{\rm toy}$ is open.

It remains to prove that $B_{\rm toy}$ is non-empty. Denote by $w_\beta(\rho,r)$ the $\rho$-component of the vector field corresponding to the toy model \eqref{toygov}.  Since $g'(r)>0$ for all $r>0$ and  $\lim_{r \rightarrow \infty }g(r) =\infty $ we have that there exists a $\beta_1>0$ such that 
\begin{align*}
w_{\beta_1}(1/2,r) >0 \;  \; \; \forall r>0.
\end{align*}
This implies that $\rho_{\beta_1}(s)>1/2$ for all $s$ in its maximal existence interval. Therefore, we have that  $\beta_1 \in \mathbb{R}_{>0} \setminus ( A_{\rm toy} \cup X_{\rm toy}) = B_{\rm toy} $.
\end{proof}


\subsection{Proof of Lemma \ref{lem:varrho}}
\label{p:lem:varrho}

Recall the  function $\varrho$ defined in equation \eqref{varrho}. Corollary \ref{cor:varrhosmooth} states that $\varrho$ is smooth. Hence, we only need to prove that
\begin{align*}
\frac{\partial \varrho}{\partial \beta} (r, \beta_0) >0  \qquad \forall \beta_0 \in \mathbb{R}_{>0}.
\end{align*}
From Corollary \ref{cor:varrhosmooth} it follows that
\begin{align}
\lim_{r \rightarrow 0}\frac{\partial \varrho}{\partial \beta }(r, \beta_0)=0 \quad \forall \beta_0 \in \mathbb{R}_{>0}. \label{lem:proof:limvarrho}
\end{align}
Observe that:
\begin{align*}
\frac{\partial^2 \varrho}{\partial r \partial \beta} &= \frac{\partial  }{\partial \beta} \left(   \frac{\partial \varrho }{\partial r } \right) \\
&=  \frac{\partial  }{\partial \beta} \left( \frac{3}{2} \frac{1-\varrho^2}{r \varrho} \left( -  1+  \frac{ \sqrt{1-\varrho^2}  (   \beta  {r}^2 g(  {r}^2) + \varrho    )   }{{r}}    \right)   \right). 
\end{align*}
Then, it follows that
\begin{align}
\frac{\partial^2 \varrho }{\partial \beta \partial r}  \Big|_{\frac{\partial \varrho}{\partial \beta } =0 }>0. \label{lem:proof:monn}
\end{align}
From \eqref{lem:proof:limvarrho} and \eqref{lem:proof:monn} we get  $\frac{\partial \rho}{ \partial \beta} >0$.


\subsection{Proof of Lemma \ref{lem:Rb}}
\label{p:lem:Rb}

The proof of  Lemma \ref{lem:Rb} follows from an equilibrium study of toy model \eqref{toygov}. More specifically, the base limit given in S4 of Definition \ref{def:toytip} corresponds to a  unique equilibrium for every $\beta$. 

Let $\beta>0$. Recall that $g$ satisfies \eqref{gfuncc}. We define
\begin{align}
f(r, \beta) := \beta r g(r^2) -1 \label{fbeta}.
\end{align}
The function $f(\cdot, \beta)$ has a unique root since $f(0,\beta) <0$, $\lim_{r \rightarrow \infty}f(r, \beta) >0$ and $\frac{\partial f}{ \partial r} >0$.  Denote the root of $f_\beta$ by  $\tilde{R}(\beta)$. Observe that $(0,\tilde{R}(\beta))$ is an equilibrium of the toy model \eqref{toygov}. We will show that $ \frac{\partial \tilde{R}(\beta)}{\partial \beta}<0$. We have that
\begin{align*}
\frac{ \partial f(\tilde{R}(\beta), \beta)}{\partial \beta} &=  0 . \\
\frac{1}{\beta} + \beta \frac{\partial \tilde{R}(\beta)}{\partial \beta} \left( g(\tilde{R}(\beta)^2) +  \tilde{R}(\beta)^2 g'(\tilde{R}(\beta)^2) \right) &= 0.
\end{align*}
Using $g'>0$ from \eqref{gfuncc} we obtain that $  \frac{\partial \tilde{R}(\beta)}{\partial \beta}<0$. It follows from S4 in Definition \ref{def:toy} that $R(\beta) = \tilde{R}(\beta)$.  This completes the proof.


\section{Curvature steady tip growth solutions at tip}
\label{sec:BATSC}

The following lemma shows that a steady tip growth solutions as given in Definition \ref{defi:steadytipgrowth} satisfies the S1 condition of toy steady tip growth solutions as given in Definition \ref{def:toy}.

\begin{lem} If $x=(\rho,r,h,\Psi,z)$ is a steady tip growth solution as given in Definition \ref{defi:steadytipgrowth} then 
\begin{align}
\lim_{s \rightarrow 0} \frac{\sqrt{1-\rho(s)^2}}{r(s)} = \eta_0 >0. \label{lem:eq:rhocurve}
\end{align}
\end{lem}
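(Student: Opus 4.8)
The claim is that at the tip ($s\to 0$) a steady tip growth solution $x=(\rho,r,h,\Psi,z)$ of the BATS model \eqref{finalfull} automatically satisfies $\lim_{s\to 0}\sqrt{1-\rho^2}/r=\eta_0>0$, i.e.\ the ``missing'' limit from S1 (present in the toy condition S1 but absent from T1) is forced by the dynamics. The natural strategy mirrors the argument used for Claim I in the proof of Lemma \ref{lem:toyequiv}: introduce the auxiliary quantity $\tilde\eta:=\sqrt{1-\rho^2}/r$, derive its ODE from \eqref{finalfull}, and show that the limiting behaviour at the tip pins $\tilde\eta$ down to a positive value.
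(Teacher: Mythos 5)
Your route — rewriting the $\rho$-equation of \eqref{finalfull} as an ODE for $\tilde\eta=\sqrt{1-\rho^2}/r$ and reading off the limit — is genuinely different from what the paper does, and it \emph{can} be made to work, but the analogy to Claim~I hides the real difficulty. In the toy case, S1 already asserts $\lim_{s\to 0}\tilde\eta=\eta_0>0$; this positivity, combined with S2, is what lets one write $\tilde\eta$ itself as an analytic function of $r^2$ (no branch point from the square root) and hence conclude $\tilde\eta'\to 0$, after which the $\tilde\eta$-equation pins the value to $1/3$. In the BATS case T1 does \emph{not} assert positivity — that is precisely what this lemma must prove — so you cannot deduce $\tilde\eta'\to 0$ from T1 and T2. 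Indeed if $1-\rho^2 = c\,r^4 + O(r^6)$ (consistent with T2, with $\rho=G_\rho(r^2)$ and $G_\rho'(0)=0$) then $\tilde\eta\sim\sqrt{c}\,r$ and $\tilde\eta'\to\sqrt{c}\neq 0$. The step ``show that the limiting behaviour at the tip pins $\tilde\eta$ down to a positive value'' is therefore exactly where the work lies, and a one-to-one mirror of Claim~I does not supply it.

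To close the gap along your line you would need a finer asymptotic argument: by T2 write $1-\rho^2=2c_k r^{2k}(1+O(r^2))$ with $c_k>0$ and integer $k\ge 1$ (T3 rules out $\rho\equiv 1$), so $\tilde\eta=\sqrt{2c_k}\,r^{k-1}(1+O(r^2))$; then compare the two expressions for $\tilde\eta'$, namely $\rho\,d\tilde\eta/dr\sim(k-1)\sqrt{2c_k}\,r^{k-2}$ and the right-hand side of the $\tilde\eta$-ODE, which (using T1 and $\Gamma/r^2\to 1/(2z_0^2)$, $\mu(\Psi)\to\mu(h_0z_0^2)$) behaves as $\tfrac{1}{2}\sqrt{2c_k}\,r^{k-2}$ whenever $k\ge 2$. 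Matching leading orders forces $k-1=\tfrac12$, impossible, so $k=1$ and $\eta_0=\sqrt{2c_1}>0$. The paper instead reads off the limit from the $h$-equation: T2 gives directly that $h'\to 0$ (because $h$, unlike $\tilde\eta$, is an analytic function of $r^2$ with no square root), and since $h\to h_0>0$, the prefactor in the $h$-equation must vanish in the limit; the term $r^2/(2\mu(\Psi)\Gamma\sqrt{1-\rho^2})\sim z_0^2/(\mu(\Psi_0)\,r\,\tilde\eta)$ would blow up faster than $1/r$ if $\tilde\eta\to 0$, so cancellation of the $1/r$ singularities forces $\tilde\eta\to 2z_0^2/(3\mu(h_0z_0^2))>0$. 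That route avoids the $k\geq 2$ case analysis entirely, which is why the paper uses it.
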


 \begin{proof} Let $x=(\rho,r,h,\Psi,z)$ is a steady tip growth solution as given in Definition \ref{defi:steadytipgrowth}. Then, it follows from T1 and T2 of Definition \ref{defi:steadytipgrowth} that $\lim_{s \rightarrow 0} h'(s) = 0$. Then, substituting the $h$-equation of (\ref{finalfull}) in the previous limit and evaluating the terms using T1 we get \eqref{lem:eq:rhocurve}.  
 \end{proof}

\section{Toy proof extensions to BATS model}
\label{ap}

We will prove part of Conjecture \ref{con:X}, \ref{con:AB}. We first need a technical lemma:

\begin{lem}  Assume Conjecture \ref{con:xa} is true. Then, for all $\alpha \in Y_\mu$ we have that $x_\alpha(\, \cdot \, ; \mu) = (\rho_\alpha,r_\alpha,h_\alpha, \Psi_\alpha,z_\alpha)(\, \cdot \, ; \mu)$ satisfies
\begin{align}
\Psi_\alpha(s ; \mu) = \frac{\int^{s}_0 r_\alpha(\sigma) h_\alpha(\sigma)d\sigma}{\Gamma(r_\alpha(s),z_\alpha(s))}.  \label{psia}
\end{align}
\label{lem:Psi}
\end{lem}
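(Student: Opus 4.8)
The claim is that the $\Psi$-variable of a tip solution, which by construction satisfies the differential equation $\Psi' = rh/\Gamma(r,z) - r\gamma(\rho,r,z)\Psi/\Gamma(r,z)$ from \eqref{finalfull}, is in fact given by the integral formula \eqref{psia}. The strategy is to verify that the right-hand side of \eqref{psia} solves the $\Psi$-ODE and matches the correct limiting behaviour at the tip, and then to invoke uniqueness of solutions. First I would compute the $s$-derivative of $\Psi_\alpha$ as \emph{defined} by \eqref{psia}: writing $P(s) := \int_0^s r_\alpha h_\alpha\, d\sigma$ so that $\Psi_\alpha = P/\Gamma$, we get $\Psi_\alpha' = P'/\Gamma - P\,\Gamma'/\Gamma^2 = r_\alpha h_\alpha/\Gamma - \Psi_\alpha\,\Gamma'/\Gamma$. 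It then suffices to check that $\Gamma'/\Gamma = r_\alpha \gamma(\rho_\alpha,r_\alpha,z_\alpha)/\Gamma$, i.e. that $\frac{d}{ds}\Gamma(r_\alpha(s),z_\alpha(s)) = r_\alpha \gamma(\rho_\alpha,r_\alpha,z_\alpha)$. This is a direct computation using $r' = \rho$, $z' = \sqrt{1-\rho^2}$ and the explicit forms of $\Gamma$ and $\gamma$ in \eqref{fluxes}; it is exactly the kind of identity that must already have been used in \cite{JONBIO} to pass from the $\Psi$-integral equation to the $\Psi$-differential equation, so I would expect it to fall out cleanly after differentiating $\Gamma(r,z) = 1 + z/\sqrt{r^2+z^2}$.

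The second ingredient is the initial condition. By condition T1 (Definition \ref{defi:steadytipgrowth}) a tip solution satisfies $\lim_{s\to 0} z(s) = z_0$, $\lim_{s\to 0} r(s) = 0$, $\lim_{s\to 0} h(s) = h_0$, and $\lim_{s\to 0}\Psi(s) = h_0 z_0^2$. I would check that the right-hand side of \eqref{psia} has the same limit as $s \to 0$: the numerator $\int_0^s r_\alpha h_\alpha\, d\sigma \to 0$, while $\Gamma(r_\alpha(s),z_\alpha(s)) \to \Gamma(0,z_0) = 1 + z_0/\sqrt{z_0^2} = 1 + \mathrm{sgn}(z_0) = 0$ since $z_0 < 0$, so the limit is of the form $0/0$ and must be resolved. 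One cleanly handles this by noting that near the tip $r_\alpha(s) \sim s$ and, using the analyticity in $r^2$ from T2, $z_\alpha(s) = z_0 + O(s^2)$, so the numerator behaves like $\tfrac12 h_0 s^2$, and one computes $\Gamma(r_\alpha,z_\alpha)$ to leading order as $\sim s^2/(2 z_0^2)$; the ratio then tends to $h_0 z_0^2$, matching T1. Alternatively — and perhaps more in the spirit of the paper — one cites that \eqref{psia} is precisely the $\Psi$-integral equation from \cite{JONBIO} (mentioned in the text just before T1 and T4) rewritten, so that a tip solution satisfies it by the very derivation of the BATS model; in that case the lemma is essentially a bookkeeping statement and the proof reduces to recalling the definition.

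Having established that both $\Psi_\alpha$ (the ODE solution component) and the expression on the right of \eqref{psia} solve the same linear first-order ODE in $\Psi$ with the same limiting value at $s \to 0^+$, equality follows from uniqueness: the $\Psi$-equation is linear in $\Psi$ with coefficients $r_\alpha h_\alpha/\Gamma$ and $r_\alpha\gamma/\Gamma$ that are smooth on the existence interval of the tip solution (where $\Gamma > 0$, $r_\alpha > 0$ by T3 and the definition of the phase space $M$), so the initial value problem has a unique solution on $(0, s_0)$, and by the tip asymptotics this extends to pin down the solution as $s \to 0$. I expect the main obstacle to be the delicate $0/0$ limit at the tip: one needs the precise leading-order asymptotics of $r_\alpha$, $z_\alpha$ and $h_\alpha$ near $s = 0$, which come from combining T1, T2 and the governing equations, and care is needed because $\Gamma$ vanishes quadratically there. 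If instead one takes the route of identifying \eqref{psia} with the original integral equation of \cite{JONBIO}, the obstacle disappears and the proof is short; I would present it that way, with the direct verification relegated to a remark.
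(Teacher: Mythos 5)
Your proposal is correct and takes essentially the same route as the paper: the paper's one-line proof likewise substitutes \eqref{psia} into the $\Psi$-equation of \eqref{finalfull} and appeals to the uniqueness from Conjecture \ref{con:xa}, with the key computational fact being precisely your identity $\frac{d}{ds}\Gamma(r_\alpha,z_\alpha) = r_\alpha\,\gamma(\rho_\alpha,r_\alpha,z_\alpha)$. Your additional care about the $0/0$ limit at $s\to 0$ (where $\Gamma$ vanishes quadratically, so the tip limit $h_0 z_0^2$ requires the asymptotics of $r_\alpha$, $z_\alpha$, $h_\alpha$) makes explicit a step the paper leaves implicit in invoking uniqueness of $x_\alpha$.
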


\begin{proof}
Follows from the uniqueness of $x_\alpha(\, \cdot \, ; \mu)$ given by Conjecture \ref{con:xa} and the substitution of \eqref{psia} into the $\Psi$-equation in the BATS model \eqref{finalfull}.
\end{proof}

The following is part of Conjecture \ref{con:X}:

\begin{lem}  Assume Conjecture \ref{con:xa} is true. Let $Y \subset Y_\mu$ and assume that $X(Y) \neq \emptyset$. Then, for all $\alpha \in X_\mu(Y)$ the maximal existence interval of $x_\alpha(\, \cdot \, ; \mu)$ is $\mathbb{R}_{>0}$.
\label{ap:lem:X}
\end{lem}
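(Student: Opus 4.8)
The plan is to mimic the proof of Lemma \ref{lem:X} (Appendix \ref{p:lem:X}) as closely as possible, replacing the two-dimensional compactness argument with a five-dimensional one, and handling the extra variables $h,\Psi,z$ by the explicit formula for $\Psi$ supplied by Lemma \ref{lem:Psi}. Fix $\alpha \in X_\mu(Y)$. By Conjecture \ref{con:xa} the solution $x_\alpha(\,\cdot\,;\mu)$ exists on some maximal interval $I^\alpha_{\rm max} = (0, s^*)$ with $s^* \in (0,\infty]$; we must show $s^* = \infty$. Argue by contradiction: suppose $s^* < \infty$. Since $\alpha \notin A_\mu(Y) \cup B_\mu(Y)$, the function $\rho_\alpha \rho_\alpha'$ has no zero on $(0,s^*)$ and $\rho_\alpha(s)>0$ there, so $\rho_\alpha' < 0$ on all of $(0,s^*)$; hence $\rho_\alpha$ is decreasing and bounded, and $\rho_\alpha(s) \to c_0 \in [0,1)$ as $s \to s^*$. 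Because $r_\alpha' = \rho_\alpha$ with $0 < \rho_\alpha < 1$, we get $r_\alpha(s) \to c_1$ for some $c_1 \in (0,\infty)$, and likewise $z_\alpha' = \sqrt{1-\rho_\alpha^2}$ is bounded, so $z_\alpha(s) \to c_2 \in (z_0, \infty)$.

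It remains to control $h_\alpha$ and $\Psi_\alpha$ up to $s^*$, since a priori the $h$-equation in \eqref{finalfull} could blow up (it is a linear ODE in $h$ with a coefficient that involves $1/r$, $1/\sqrt{1-\rho^2}$, and $\Gamma(r,z)^{-1}$, all of which are bounded near $s^*$ by the previous paragraph provided $\Gamma(c_1,c_2)\neq 0$, which holds since $\Gamma > 0$ away from the tip). So the coefficient multiplying $h_\alpha$ in the $h$-equation is continuous and bounded on a neighborhood of $s^*$, whence $h_\alpha$ stays bounded and bounded away from $0$ on $(s_1, s^*)$ for any $s_1>0$ by Grönwall applied to $\log h_\alpha$. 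Then Lemma \ref{lem:Psi} gives
\begin{align*}
\Psi_\alpha(s;\mu) = \frac{\int_0^s r_\alpha(\sigma) h_\alpha(\sigma)\, d\sigma}{\Gamma(r_\alpha(s), z_\alpha(s))},
\end{align*}
and the numerator is an integral of a bounded function over a bounded interval, while the denominator converges to $\Gamma(c_1,c_2) > 0$; hence $\Psi_\alpha(s;\mu)$ has a finite limit as $s \to s^*$. Therefore $x_\alpha(s;\mu)$ converges as $s \to s^*$ to a point $(c_0, c_1, \lim h_\alpha, \lim \Psi_\alpha, c_2)$ which lies in the phase space $M$ defined in \eqref{M} (all coordinates are in the required open ranges; in particular $\lim h_\alpha > 0$ and $\lim \Psi_\alpha > 0$). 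This contradicts maximality of $I^\alpha_{\rm max}$, so $s^* = \infty$.

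The main obstacle is the boundedness of $h_\alpha$ (and the non-degeneracy $\Gamma(r_\alpha,z_\alpha) \not\to 0$) as $s \to s^*$: unlike the toy model, the BATS vector field couples $h$ back into everything, so one has to verify carefully that no coefficient in the $h$-equation degenerates before $s^*$. The key point that makes this work is that $r_\alpha$ is bounded below by a positive constant on any interval $(s_1, s^*)$ with $s_1 > 0$ (because $r_\alpha$ is increasing), so $1/r_\alpha$ and $\Gamma(r_\alpha, z_\alpha)^{-1}$ cause no trouble away from the tip, and $1/\sqrt{1-\rho_\alpha^2}$ is bounded because $\rho_\alpha \le \rho_\alpha(s_1) < 1$. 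Once these bounds are in place, the argument is the standard "bounded solution on a bounded interval with bounded vector field extends" principle, exactly as in Lemma \ref{lem:X}.
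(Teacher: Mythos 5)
Your proposal is correct and follows essentially the same route as the paper's proof: argue by contradiction from a finite maximal interval, use $\alpha \in X_\mu(Y)$ to pin down $\rho_\alpha$, $r_\alpha$, $z_\alpha$, invoke Lemma \ref{lem:Psi} for $\Psi_\alpha$, and exploit the linear structure of the $h$-equation to keep $h_\alpha$ bounded and positive. The paper phrases the contradiction as ruling out both blow-up of $\|x_\alpha\|$ and approach to $\partial M$, whereas you directly show $x_\alpha(s)$ converges to an interior point of $M$; this is only a presentational difference, and your version is, if anything, slightly more explicit about why the coefficient in the $h$-equation stays bounded.
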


\begin{proof}
Let $\alpha \in X_\mu(Y)$. For notational convenience we will omit the $\mu$-dependency and write  $x_\alpha$. Let $(\rho_\alpha, h_\alpha,\Psi_\alpha, z_\alpha, r_\alpha):= x_\alpha$.  We will use contradiction to prove the lemma. Suppose that the maximal existence interval of ${x}_{\alpha}$ is given by $(0,s_0)$ with $s_0>0$.  We will prove the following: 

\qquad \textbf{Claim:} $\lim_{s \rightarrow s_0} \Vert {x}_{\alpha}(s) \Vert= \infty$ 

We use contradiction to prove the claim. Suppose that $\lim_{s \rightarrow s_0}  {x}_{\alpha}(s) \in \partial M$ with  $M$ the phase space given in \eqref{M}. It follows by the definition of tip solutions, Definition \ref{defi:tip}, that
\begin{align}
\lim_{s \rightarrow s_0} \rho_{\alpha}(s) &= c_0 \in [0,1) \label{rhobetastarr2} \\
 \lim_{s \rightarrow s_0} r_{\alpha}(s) &>0 .
\end{align}
Observe that from Lemma \ref{lem:Psi} it follows that $\lim_{s \rightarrow s_0} \Psi_\alpha(s) >0$. The $h$-equation in the BATS model \eqref{finalfull} is linear in $h$, therefore, if  $\lim_{s \rightarrow s_0}  {x}_{\alpha}(s) \in \partial M$ then $\lim_{s \rightarrow s_0}  {h}_{\alpha}(s)>0$. Hence, we have that $\lim_{s \rightarrow s_0}  {x}_{\alpha}(s) \notin \partial M$.

We will show that all the components of ${x}_{\alpha}$ are bounded: 
\begin{itemize}
\item[-] {$\rho_{\alpha}$ is bounded:} This follows from \eqref{rhobetastarr2}.
\item[-] {$r_{\alpha}$ is bounded:} Recall from the BATS model \eqref{finalfull} that $r'=\rho$. Hence, $\rho_{\alpha}$ is bounded implies that $r_{\alpha}$ is bounded.  
\item[-] {$z_{\alpha}$ is bounded:}  The BATS model \eqref{finalfull} gives that $z'=\sqrt{1-\rho^2}$. Hence, $\rho_{\alpha}$ is bounded implies that $z_{\alpha}$ is bounded.
\item[-] {$h_{\alpha}$ is bounded:} From the BATS model it follows that
\begin{align*}
0 <h_{\alpha}' &< \left( \frac{r \gamma(\rho_{\alpha},z_{\alpha},r_{\alpha})}{\Gamma(z_{\alpha},r_{\alpha})} \right) h_{\alpha} .
\end{align*}
Since $\rho_{\alpha}$, $z_{\alpha}$ , $r_{\alpha}$ are bounded  and $r_{\alpha}$ increasing it follows that $h_{\alpha}$  is bounded.
\item[-] {$\Psi_{\alpha}$ is bounded:} 
Since $\rho_{\alpha}$, $h_{\alpha}$, $z_{\alpha}$ , $r_{\alpha}$ are bounded  and $r_{\alpha}$ increasing it follows by Lemma \ref{lem:Psi} that $\Psi_{\alpha}$ is bounded. 
\end{itemize}
Consequently, $\Vert {x}_{\alpha} \Vert$ is bounded which completes the contradiction.
\end{proof}

The following is part of Conjecture \ref{con:AB}:

\begin{lem} Assume Conjecture \ref{con:xa} is true. Then, for all open $Y \subset Y_\mu$ the sets $A_\mu(Y),B_\mu(Y)$ are disjoint and the set $A_\mu(Y)$ is open.
\label{ap:lem:AB} 
\end{lem}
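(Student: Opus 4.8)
The plan is to follow, almost verbatim, the proofs of Claim~1 (disjointness) and of the openness half of Claim~2 in the proof of Lemma~\ref{lem:AB} (Appendix~\ref{p:lem:AB}), with Corollary~\ref{cor:toytip} replaced by Conjecture~\ref{con:xa}. The feature of the BATS model \eqref{finalfull} that makes this work is that its $\rho$-equation, evaluated at $\rho=0$, reads $\rho'=-\tfrac{3}{2r}<0$ for every $r>0$: the set $\{\rho=0\}$ is crossed transversally, and the sign of the crossing does not involve $h,\Psi,z$. (A transversal crossing of $\{\rho'=0\}$ would instead require the sign of $\rho''$, which does depend on $h,\Psi,z$ and their derivatives — this is why I would not attempt openness of $B_\mu(Y)$.) Throughout I would write $x_\alpha=(\rho_\alpha,r_\alpha,h_\alpha,\Psi_\alpha,z_\alpha)$ for the tip solution from Conjecture~\ref{con:xa}, and record the elementary remark that whenever $\rho_\alpha\rho_\alpha'<0$ on an interval $(0,s_*)$ the sign of $\rho_\alpha$ there is the one it has near the tip, namely positive (Definition~\ref{defi:tip}); hence $r_\alpha'=\rho_\alpha>0$ on $(0,s_*)$, and together with $\lim_{s\to0}r_\alpha(s)=0$ and $|r_\alpha'|<1$ this gives $r_\alpha(s)\in(0,\infty)$ for $s\in(0,s_*]$.

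For disjointness I would argue by contradiction: suppose $\alpha\in A_\mu(Y)\cap B_\mu(Y)$ with respective witnesses $s_A,s_B\in\mathbb{R}_{>0}$. Then $\rho_\alpha\rho_\alpha'<0$ on $(0,s_A)\cup(0,s_B)=(0,\max(s_A,s_B))$, so if $s_A\neq s_B$ the interior point $\min(s_A,s_B)$ would have to satisfy both $\rho_\alpha\rho_\alpha'<0$ and $\rho_\alpha\rho_\alpha'=0$ (the latter because $\rho_\alpha(s_A)=0$ if the minimum is $s_A$, resp. $\rho_\alpha'(s_B)=0$ if it is $s_B$), a contradiction. Hence $s_A=s_B=:s_0$, giving $\rho_\alpha(s_0)=\rho_\alpha'(s_0)=0$; but $r_\alpha(s_0)\in(0,\infty)$ by the remark above, so evaluating the $\rho$-equation of \eqref{finalfull} at $\rho_\alpha(s_0)=0$ yields $\rho_\alpha'(s_0)=-\tfrac{3}{2r_\alpha(s_0)}<0$, contradicting $\rho_\alpha'(s_0)=0$.

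For openness of $A_\mu(Y)$ I would fix $\alpha\in A_\mu(Y)$ with witness $s_0$, note that $\rho_\alpha(s_0)=0$ forces $\rho_\alpha'(s_0)=-\tfrac{3}{2r_\alpha(s_0)}<0$, and pick $\delta>0$, $s_1>s_0$ with $\rho_\alpha'<0$ on $[s_0-\delta,s_1]$, $\rho_\alpha>0$ on $[s_0-\delta,s_0)$, $\rho_\alpha<0$ on $(s_0,s_1]$, i.e. a transversal zero at $s_0$. Since $Y$ is open and $F_\mu$ is continuous (Conjecture~\ref{con:xa}), for $\alpha'$ in a neighbourhood $U\subset Y$ of $\alpha$ the solution $x_{\alpha'}$ is uniformly close to $x_\alpha$ on $[s_0-\delta,s_1]$ — and so are the $s$-derivatives, since \eqref{finalfull} is smooth there ($r_\alpha$ bounded below) — so $\rho_{\alpha'}$ still has a transversal zero at some $s_0'\in(s_0-\delta,s_1)$ with $\rho_{\alpha'}>0$, $\rho_{\alpha'}'<0$ on $[s_0-\delta,s_0')$. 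Near the tip $\rho_{\alpha'}>0$, $\rho_{\alpha'}'<0$ because $\alpha'$ is itself a tip solution (Definition~\ref{defi:tip}), and on compact subintervals of $(0,s_0-\delta]$ the inequality $\rho_{\alpha'}\rho_{\alpha'}'<0$ follows from closeness to $\rho_\alpha\rho_\alpha'<0$. Splicing these three regions gives $\rho_{\alpha'}\rho_{\alpha'}'<0$ on $(0,s_0')$ with $\rho_{\alpha'}(s_0')=0$, i.e. $\alpha'\in A_\mu(Y)$, so $A_\mu(Y)$ is open.

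The main obstacle, and the reason for conditioning on Conjecture~\ref{con:xa}, is the tip region in the openness step: one must know the continuity of $F_\mu$ is strong enough that the sign condition $\rho_{\alpha'}\rho_{\alpha'}'<0$ persists on a \emph{fixed} one-sided neighbourhood of $s=0$, rather than on an interval that might collapse under perturbation. In the toy model this is exactly what the change of variables of Appendix~\ref{sec:exuntoytip} supplies — a tip solution corresponds to the one-dimensional unstable manifold $W^u_\beta(q_0)$ of the regular equilibrium $q_0$, with the uniform parameter dependence provided by Lemma~\ref{lem:WaGs} — and the analogous statement for the BATS model is precisely the content a proof of Conjecture~\ref{con:xa} (as sketched in Section~\ref{sec:disc}) would need to establish; granting it, the remainder is the transversal-crossing reasoning above, identical in form to Appendix~\ref{p:lem:AB}.
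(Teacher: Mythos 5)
Your proposal is correct and follows essentially the same route as the paper's proof in Appendix \ref{ap}: both arguments reduce disjointness and openness of $A_\mu(Y)$ to the observation that the $\rho$-equation of \eqref{finalfull}, evaluated at $\rho=0$ with $r$ finite and positive, yields $\rho'<0$ independently of $h,\Psi,z$, and both then invoke the continuity of $F_\mu$ from Conjecture~\ref{con:xa}. You have filled in details the paper states tersely (the identification $s_A=s_B$, the positivity and finiteness of $r(s_0)$, and the need to rule out the tip interval collapsing under perturbation), and your remark that the analogous argument for $B_\mu(Y)$ fails because the sign of $\rho''$ at a zero of $\rho'$ involves $h,\Psi,z$ matches the paper's own discussion in Section~\ref{sec:disc}.
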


\begin{proof}
For notational convenience we will omit the $\mu$-dependency and write  $x_\alpha$. Let $(\rho_\alpha, h_\alpha,\Psi_\alpha, z_\alpha, r_\alpha):= x_\alpha$.  We will first prove the disjointness of $A_\mu(Y),B_\mu(Y)$.  We will use contradiction. Suppose that $A_{\mu}(Y)  \cap  B_{\mu}(Y) \neq \emptyset$. Take $\alpha \in A_{\rm toy}  \cap  B_{\rm toy} $. Then there exists a $s_0 >0$ such that $\rho_\alpha(s) \rho_\alpha'(s) <0$ for all $s \in (0,s_0)$ and   $\rho_\alpha'(s_0)=\rho_\alpha(s_0) =0$. It then follows by the BATS model \eqref{finalfull} that $\lim_{s \rightarrow s_0} r_{\alpha}(s_0) = \infty$ but $r_{\alpha}$ is bounded since $|r_\alpha'| \leq 1$. Hence, $A_\mu(Y),B_\mu(Y)$ are disjoint. 

We continue with proving the openness of $A_\mu(Y)$. Let $\alpha \in A_\mu(Y)$. Take $s_0$ such that  $ \rho_\alpha(s)\rho_\alpha'(s) <0$ for all $s \in (0,s_0)$ and  $\rho_\alpha(s_0) =0$. It follows from the BATS model \eqref{finalfull} that $\rho_\alpha'(s_0) <0$. Then, the openness of $A_\mu(Y)$ follows by the continuity of the map $F_{\mu}$ of Conjecture \ref{con:xa}.
\end{proof}

\end{appendices}

\newpage 

\bibliographystyle{plain}
\bibliography{ref3}

\end{document}